\DeclareMathOperator{\torh}{tor_\hbar}
\DeclareMathOperator{\Ann}{Ann}
\DeclareMathOperator{\Tor}{Tor}
\DeclareMathOperator{\RHom}{RHom}
\DeclareMathOperator{\Ext}{Ext}
\DeclareMathOperator{\fRHom}{R\mathcal{H}om}
\DeclareMathOperator{\id}{id}
\DeclareMathOperator{\opp}{op}
\DeclareMathOperator{\Mod}{Mod}
\DeclareMathOperator{\Hn}{H}
\DeclareMathOperator{\gr}{gr}
\DeclareMathOperator{\Rg}{R\Gamma}
\DeclareMathOperator{\dR}{R}
\DeclareMathOperator{\Supp}{Supp}
\DeclareMathOperator{\supp}{supp}
\DeclareMathOperator{\codim}{codim}
\DeclareMathOperator{\Ker}{Ker}
\DeclareMathOperator{\im}{Im}
\begin{document}

\theoremstyle{plain} % style plain
\newtheorem{thm}{Theorem}[section]
\newtheorem{cor}[thm]{Corollary}
\newtheorem{prop}[thm]{Proposition}
\newtheorem{lemme}[thm]{Lemma}
\newtheorem{conj}[thm]{Conjecture}
\newtheorem*{theoetoile}{Theorem} 
\newtheorem*{conjetoile}{Conjecture} 
\newtheorem*{theoetoilefr}{Theorem}
\newtheorem*{propetoilefr}{Proposition}

\theoremstyle{definition} % style definition
\newtheorem{defi}[thm]{Definition}
\newtheorem{example}[thm]{Example}
\newtheorem{examples}[thm]{Examples}
\newtheorem{question}[thm]{Question}
\newtheorem{Rem}[thm]{Remark}
\newtheorem{Notation}[thm]{Notation}

\numberwithin{equation}{section}

\newcommand{\On}[1]{\mathcal{O}_{#1}}
\newcommand{\En}[1]{\mathcal{E}_{#1}}
\newcommand{\Fn}[1]{\mathcal{F}_{#1}} 
\newcommand{\tFn}[1]{\mathcal{\tilde{F}}_{#1}}
\newcommand{\hum}[1]{hom_{\mathcal{A}}({#1})}
\newcommand{\hcl}[2]{#1_0 \lbrack #1_1|#1_2|\ldots|#1_{#2} \rbrack}
\newcommand{\hclp}[3]{#1_0 \lbrack #1_1|#1_2|\ldots|#3|\ldots|#1_{#2} \rbrack}
\newcommand{\catMod}{\mathsf{Mod}}
\newcommand{\Der}{\mathsf{D}}
\newcommand{\Ds}{D_{\mathbb{C}}}
\newcommand{\DG}{\mathsf{D}^{b}_{dg,\mathbb{R}-\mathsf{C}}(\mathbb{C}_X)}
\newcommand{\lI}{[\mspace{-1.5 mu} [}
\newcommand{\rI}{] \mspace{-1.5 mu} ]}
\newcommand{\Ku}[2]{\mathfrak{K}_{#1,#2}}
\newcommand{\iKu}[2]{\mathfrak{K^{-1}}_{#1,#2}}
\newcommand{\Be}{B^{e}}
\newcommand{\op}[1]{#1^{\opp}}
\newcommand{\N}{\mathbb{N}}
\newcommand{\Ab}[1]{#1/\lbrack #1 , #1 \rbrack}
\newcommand{\Du}{\mathbb{D}}
\newcommand{\C}{\mathbb{C}}
\newcommand{\Z}{\mathbb{Z}}
\newcommand{\w}{\omega}
\newcommand{\K}{\mathcal{K}}
\newcommand{\Hoc}{\mathcal{H}\mathcal{H}}
\newcommand{\env}[1]{{\vphantom{#1}}^{e}{#1}}
\newcommand{\eA}{{}^eA}
\newcommand{\eB}{{}^eB}
\newcommand{\eC}{{}^eC}
\newcommand{\cA}{\mathcal{A}} 
\newcommand{\cB}{\mathcal{B}}
\newcommand{\cR}{\mathcal{R}}
\newcommand{\cL}{\mathcal{L}}
\newcommand{\cO}{\mathcal{O}}
\newcommand{\cM}{\mathcal{M}}
\newcommand{\cN}{\mathcal{N}}
\newcommand{\cK}{\mathcal{K}}
\newcommand{\cC}{\mathcal{C}}
\newcommand{\cF}{\mathcal{F}}
\newcommand{\cG}{\mathcal{G}}
\newcommand{\cP}{\mathcal{P}}
\newcommand{\cQ}{\mathcal{Q}}
\newcommand{\cU}{\mathcal{U}}
\newcommand{\cE}{\mathcal{E}}
\newcommand{\Hper}{\Hn^0_{\textrm{per}}}
\newcommand{\Dper}{\Der_{\mathrm{perf}}}
\newcommand{\Yo}{\textrm{Y}}
\newcommand{\gqcoh}{\mathrm{gqcoh}}
\newcommand{\coh}{\mathrm{coh}}
\newcommand{\cc}{\mathrm{cc}}
\newcommand{\qcc}{\mathrm{qcc}}
\newcommand{\gd}{\mathrm{gd}}
\newcommand{\qcoh}{\mathrm{qcoh}}
\newcommand{\lcl}{\mathrm{lcl}}
\newcommand{\fin}{\mathrm{fin}}
\newcommand{\obplus}[1][i \in I]{\underset{#1}{\overline{\bigoplus}}}
\newcommand{\Lte}{\mathop{\otimes}\limits^{\rm L}}
\newcommand{\te}{\mathop{\otimes}\limits^{}}
\newcommand{\pt}{\textnormal{pt}}
\newcommand{\A}[1][X]{\cA_{{#1}}}
\newcommand{\dA}[1][X]{\cC_{X_{#1}}}
\newcommand{\conv}[1][]{\mathop{\circ}\limits_{#1}}
\newcommand{\sconv}[1][]{\mathop{\ast}\limits_{#1}}
\newcommand{\reim}[1]{\textnormal{R}{#1}_!}
\newcommand{\roim}[1]{\textnormal{R}{#1}_\ast}
\newcommand{\ldetens}{\overset{\mathnormal{L}}{\underline{\boxtimes}}}
\newcommand{\br}{\bigr)}
\newcommand{\bl}{\bigl(}
\newcommand{\sC}{\mathscr{C}}
\newcommand{\ucat}{\mathbf{1}}
\newcommand{\ubtimes}{\underline{\boxtimes}}
\newcommand{\uLte}{\mathop{\underline{\otimes}}\limits^{\rm L}} 
\newcommand{\Lp}{\mathrm{L}p}
\newcommand{\pder}[3][]{\frac{\partial^{#1}#2}{\partial{#3}}}
\newcommand{\reg}{\mathrm{reg}}
\newcommand{\sing}{\mathrm{sing}}
\newcommand{\fExt}{\mathcal{E}xt}
\newcommand{\dL}{\mathrm{L}}
\newcommand{\fgd}{\mathrm{fgd}}

\author{Fran\c{c}ois Petit}
\email{francois.petit@uni.lu}
\thanks{The author was supported by the EPSRC grant EP/G007632/1}
\address{University of Luxembourg\\ 
Campus Kirchberg, FSTC\\ 
Mathematics Research Unit\\ 
6, rue Coudenhove-Kalergi\\
L-1359 Luxembourg-Kirchberg\\
Grand-Duchy of Luxembourg.}
\title[]{The Codimension-Three conjecture for holonomic DQ-modules}
%\classification{32C38, 32G34, 53D55}
\begin{abstract} 
We prove an analogue for holonomic DQ-modules of the codimension-three conjecture for microdifferential modules recently proved by Kashiwara and Vilonen.  Our result states that any holonomic DQ-module having a lattice extends uniquely beyond an analytic subset of codimension equal to or larger than three in a Lagrangian subvariety containing the support of the DQ-module. 
\end{abstract}

\maketitle
\section{Introduction}
Problems of extension of analytic objects have been influential in complex analysis and complex analytic geometry. In 1966, Serre asked the following question in \cite{Serre}. Let $X$ be an analytic space and let $S$ be a closed analytic subset of $X$.
\begin{center}
\textit{ Si $\codim S \geq 3$, est-il vrai que tout faisceau localement libre (ou même seulement réflexif) sur $X-S$ est prolongeable?} \footnote{If $\codim S \geq 3$, is it true that any locally free (or even only reflexive) sheaf on $X - S$ is extendable ?}
\end{center}
This question stimulated intense activity around problems of extension of coherent analytic sheaves. One of the major results in this area is an extension theorem due to Trautmann, Frisch-Guenot, and Siu (see \cite{FG, Siu, T}) answering Serre's question.

\begin{thm}[Frisch-Guenot, Trautmann, and Siu]\label{thm:FGTS}
Let $X$ be a complex manifold, let $S$ be a closed analytic subset of $X$ and $j:X \setminus S \to X$ be the open embedding of $X \setminus S$ into $X$. If $\cF$ is a reflexive coherent $\cO_{X \setminus S}$-module and $\codim S \geq 3$ then $j_\ast\cF$ is a coherent $\cO_X$-module.
\end{thm}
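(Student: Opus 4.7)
The question is local on $X$, so one may reduce to a Stein neighborhood of a given point $x_0 \in S$. My plan has two parts: first, to produce a reflexive coherent extension $\tilde{\cF}$ of $\cF$ across $S$, and second, to identify this extension with $j_\ast \cF$ by a local cohomology argument.

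Granting the existence of \emph{some} coherent extension $\tilde{\cF}_0$ of $\cF$ to $X$, its double dual $\tilde{\cF} := \fHom_{\cO_X}(\fHom_{\cO_X}(\tilde{\cF}_0,\cO_X),\cO_X)$ is a reflexive coherent $\cO_X$-module that still extends $\cF$, since $\cF$ is itself reflexive on $X \setminus S$. The adjunction morphism $\tilde{\cF} \to j_\ast j^{-1}\tilde{\cF} = j_\ast \cF$ fits into the four-term exact sequence
\begin{equation*}
0 \to \mathcal{H}^{0}_{S}(\tilde{\cF}) \to \tilde{\cF} \to j_\ast \cF \to \mathcal{H}^{1}_{S}(\tilde{\cF}) \to 0,
\end{equation*}
so the coherence of $j_\ast \cF$ reduces to the vanishing of the two boundary terms. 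Because $\tilde{\cF}$ is reflexive on the smooth manifold $X$, it satisfies Serre's condition $S_2$, so for every $x \in S$ the $I_{S,x}$-depth of $\tilde{\cF}_x$ is bounded below by $\min(2,\mathrm{ht}(I_{S,x}))$. The codimension hypothesis forces this minimum to equal $2$, and Grothendieck's local cohomology vanishing theorem then yields $\mathcal{H}^{i}_{S}(\tilde{\cF}) = 0$ for $i \leq 1$; hence $j_\ast \cF \simeq \tilde{\cF}$ is coherent.

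The main obstacle is therefore the production of the coherent extension $\tilde{\cF}_0$ itself. The naive approach---take a finite presentation of $\cF$ on a Stein open of $X \setminus S$ and extend the defining matrix by Hartogs---fails because $X \setminus S$ is typically not Stein (for instance $\mathbb{C}^n \setminus \{0\}$ is not Stein when $n \geq 2$), so globally defined presentations are unavailable. The classical arguments of Trautmann, Frisch--Guenot, and Siu circumvent this by inducting on the dimension of the singular locus of $\cF$ and exploiting the codimension hypothesis to control the local cohomology sheaves $\mathcal{H}^{i}_{S}(\cdot)$ of auxiliary coherent sheaves, reducing to a configuration in which the extension can be built by hand. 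A secondary technical point is checking that double-dualization behaves well at points of $S$ and does not introduce spurious associated primes, which again rests on $\codim S \geq 3$.
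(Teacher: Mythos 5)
Your writeup is not a proof of the theorem: it correctly organizes the problem but then explicitly defers the one step that carries all of its content. The part you do carry out --- replacing a hypothetical coherent extension $\tilde{\cF}_0$ by its double dual $\tilde{\cF}$, and then killing $\mathcal{H}^0_S(\tilde{\cF})$ and $\mathcal{H}^1_S(\tilde{\cF})$ via the depth estimate for reflexive sheaves on a smooth manifold --- is correct, but note that this step only uses $\codim S\geq 2$ (a reflexive sheaf is locally a second syzygy, so its $I_S$-depth is at least $\min(2,\mathrm{ht}\,I_{S,x})$, and $\mathrm{ht}\,I_{S,x}\geq 2$ already suffices). That is a strong signal that the entire force of the hypothesis $\codim S\geq 3$ must live in the step you skipped, namely the local construction of \emph{some} coherent extension $\tilde{\cF}_0$ across $S$. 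Writing ``the classical arguments of Trautmann, Frisch--Guenot, and Siu circumvent this'' is a citation, not an argument; as it stands the proposal reduces the theorem to its hardest part and stops there. If you want an honest self-contained proof you would need to supply that construction (e.g., Siu's induction on the dimension of $S$ together with Hartogs-type extension of sections of $\cF$ and of $\fHom$-sheaves over slices where $S$ has been cut down, using the codimension-three bound to guarantee that the extended section modules are finitely generated).

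For what it is worth, the paper does not prove this statement either: Theorem \ref{thm:FGTS} is quoted from the literature (\cite{FG}, \cite{Siu}, \cite{T}) and used as a black box (its $\cO_X^\hbar$-analogue, Theorem \ref{thm:codimcom}, is likewise imported from \cite{KV}). So nothing you wrote contradicts the paper; the issue is only that, judged as a standalone proof, the proposal has a genuine gap exactly at the existence of the coherent extension.
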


In the 1970's, Sato, Kashiwara and Kawai introduced and studied systematically the theory of microdifferential systems (see \cite{SKK}). The codimension-three conjecture was formulated by Kashiwara at the end of the 1970's and based on his study of holonomic systems with regular singularities. This conjecture is concerned with the extension of a holonomic microdifferential system over an analytic subset of the cotangent bundle of a complex manifold. It was recently proved by Kashiwara and Vilonen (see \cite{KV}).
\begin{thm}[{\cite[Theorem 1.2]{KV}}]
Let $X$ be a complex manifold, $U$ an open subset of $T^\ast X$, $\Lambda$ a closed Lagrangian analytic subset of $U$, and $Y$ a closed analytic subset of $\Lambda$ such that $\codim_\Lambda Y \geq 3$. Let $\cE_X$ the sheaf of microdifferential operators on $T^\ast X$ and $\cM$ be a holonomic $(\mathcal{E}_X|_{U \setminus Y})$-module whose support is contained in $\Lambda \setminus Y$. Assume that $\cM$ possesses an $(\mathcal{E}_X(0)|_{U \setminus Y})$-lattice. Then $\cM$ extends uniquely to a holonomic module defined on $U$ whose support is contained in $\Lambda$.
\end{thm}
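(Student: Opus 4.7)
Both parts of the statement are local on $U$, and uniqueness of extensions follows from existence applied to the sheaf $\fHom_{\cE_X}(\cN, \cN')$ between two candidate extensions, so the plan concentrates on constructing the extension $\cN$. The central idea is to reduce the non-commutative extension problem for the microdifferential module $\cM$ to the commutative extension problem for coherent $\cO_\Lambda$-modules given by Theorem \ref{thm:FGTS}.

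The first step is to exploit the lattice: pass from $\cM$ to the $\cE_X(0)|_{U \setminus Y}$-lattice $\cM_0$ and then to its associated graded $\gr \cM_0$, which is naturally a coherent $\cO_\Lambda|_{U \setminus Y}$-module (its support being contained in $\Lambda \setminus Y$, and its generic rank being that of $\cM_0/\cE_X(-1)\cM_0$). Replacing $\cM_0$ by an appropriate modification so that $\gr \cM_0$ becomes a reflexive $\cO_\Lambda$-module in a suitable sense, the Frisch-Guenot-Trautmann-Siu theorem applies: since $\codim_\Lambda Y \geq 3$, the reflexive associated graded extends uniquely across $Y$ to a coherent $\cO_\Lambda|_U$-module $\cF$.

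The hard step is to lift $\cF$ back to a coherent $\cE_X(0)|_U$-lattice $\widetilde{\cM_0}$ whose associated graded recovers $\cF$ and whose restriction to $U \setminus Y$ is the modified $\cM_0$. Viewing $\cE_X(0)$ as a (non-formal) deformation of $\cO_\Lambda$ along the symbol filtration, the obstructions to producing this lift sit in $\Ext^2$-type groups on $\Lambda$ involving $\cF$ with itself, and one needs the codimension-three hypothesis to annihilate them. Once $\widetilde{\cM_0}$ is obtained, setting $\cN := \cE_X \otimes_{\cE_X(0)} \widetilde{\cM_0}$ yields the desired extension; holonomicity is inherited from the lattice together with the support condition $\Supp \cN \subset \Lambda$.

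The main obstacle is precisely this last noncommutative lifting step. The commutative extension across a codimension-three subset is classical; the subtle parts are (i) choosing the right reflexive modification of the lattice when $\Lambda$ is a singular Lagrangian, since reflexivity of the associated graded over a singular base is delicate, and (ii) a precise identification of the obstruction space in the deformation-theoretic picture, together with the verification that it vanishes (or at least that its image into $\Gamma_Y$ does) under $\codim_\Lambda Y \geq 3$. This is where the technical core of the Kashiwara-Vilonen argument sits, and this is what the remainder of the paper will adapt to the DQ-module context.
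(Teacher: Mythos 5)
There is a genuine gap, and it sits exactly where you locate your ``hard step.'' Passing to the associated graded $\gr\cM_0$ and then trying to lift the extended $\cO_\Lambda$-module $\cF$ back to an $\cE_X(0)$-lattice is not how the argument goes, and you give no argument that this lift exists: the claim that the obstructions live in some $\Ext^2$-group which vanishes under $\codim_\Lambda Y\geq 3$ is asserted, not proved, and it is not how the codimension hypothesis is actually used (it enters through the vanishing $\Hn^0_Y=\Hn^1_Y=0$ for uniqueness, and through the hypotheses of the commutative extension theorem). Moreover, as you yourself note, $\Lambda$ is in general singular, so Theorem \ref{thm:FGTS} (which is about reflexive sheaves on a complex \emph{manifold}) does not apply to $\cO_\Lambda$-modules; this is not a minor technicality but a reason the associated-graded route is abandoned altogether.

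The actual strategy --- both in \cite{KV} and in the DQ-module version proved in this paper --- avoids the lifting problem entirely by never passing to $\gr$. One works locally, chooses a projection $\rho$ to a smooth base $M$ that is \emph{finite} on $\Lambda$, and pushes the lattice forward by $\rho$; this converts the noncommutative module into a coherent module over $\cO_M^\hbar=\cO_M[[\hbar]]$ on the smooth $M$, retaining the complete filtered structure rather than its symbol. Reflexivity of the pushforward is arranged by replacing the lattice with its bidual $\fExt^n(\fExt^n(\cdot,\cA),\cA)$ (Proposition \ref{prop:hollattice}) and using the compatibility of duality with finite pushforward (Proposition \ref{prop:dual}). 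The commutative input is then Theorem \ref{thm:codimcom}, the Kashiwara--Vilonen extension theorem for reflexive coherent $\cO^\hbar$-modules --- not the classical Theorem \ref{thm:FGTS}, whose $\hbar$-analogue is precisely the new and difficult commutative ingredient. Finally one returns to the noncommutative side via the equivalence between $\rho$-finite coherent modules upstairs and coherent modules downstairs (the coherence criterion of Proposition \ref{prop:finitness}), with no obstruction theory needed. Your proposal would have to supply a genuinely new deformation-theoretic lifting argument to be salvaged, and no such argument is known.
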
 

The proof of the conjecture was made possible by the deep result of Kashiwara and Vilonen extending Theorem \ref{thm:FGTS} to coherent sheaves over $\cO_X^\hbar:=\cO_X[[\hbar]]$.
\begin{thm}[{\cite[Theorem 1.6]{KV}}]\label{thm:codimcom}
Let X be a complex manifold, let $Y$ be a closed analytic subset of $X$ and $j:X \setminus Y \to X$ the open embedding of $X \setminus Y$ into $X$. If $\cN$ is a coherent reflexive $\cO^\hbar_{X \setminus Y}$-module and $\codim Y \geq 3$ then $j_\ast\cN$ is a coherent $\cO_X^\hbar$-module.
\end{thm}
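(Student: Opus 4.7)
The plan is to reduce Theorem \ref{thm:codimcom} to the classical Frisch-Guenot-Trautmann-Siu theorem (Theorem \ref{thm:FGTS}) by exploiting the $\hbar$-adic filtration on $\cN$. The initial observation is that reflexivity of $\cN$ over $\cO^\hbar_{X\setminus Y}$ forces $\hbar$-torsion freeness, since $\cN$ embeds into its bidual, which is $\hbar$-torsion free because $\cO^\hbar$ is. Hence for every $n\geq 1$ there is a short exact sequence
$$0\to\cN\xrightarrow{\hbar^n}\cN\to\cN/\hbar^n\cN\to 0,$$
and in particular $\cN_0:=\cN/\hbar\cN$ is coherent over $\cO_{X\setminus Y}$. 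The next step is to verify that reflexivity descends modulo $\hbar$, i.e.\ that $\cN_0$ is itself reflexive over $\cO_{X\setminus Y}$; this rests on a base-change isomorphism of the form
$$\mathcal{H}om_{\cO^\hbar}(\cN,\cO^\hbar)\otimes_{\cO^\hbar}\cO\;\simeq\;\mathcal{H}om_{\cO}(\cN_0,\cO),$$
valid because $\cN$ is $\hbar$-torsion free and coherent, which allows one to identify $(\cN^{\vee\vee})_0$ with $\cN_0^{\vee\vee}$ and transfer the hypothesis $\cN\simeq\cN^{\vee\vee}$ down to $\cN_0\simeq\cN_0^{\vee\vee}$.

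Once reflexivity of $\cN_0$ is established, Theorem \ref{thm:FGTS} produces a coherent reflexive $\cO_X$-module $\widetilde{\cN}_0$ with $\widetilde{\cN}_0|_{X\setminus Y}\cong\cN_0$. One then lifts this extension inductively along the $\hbar$-adic tower: assuming a coherent $\cO_X[\hbar]/\hbar^{n+1}$-module $\widetilde{\cN}_n$ extending $\cN/\hbar^{n+1}\cN$ has been built, one seeks $\widetilde{\cN}_{n+1}$ fitting into an exact sequence
$$0\to\widetilde{\cN}_0\to\widetilde{\cN}_{n+1}\to\widetilde{\cN}_n\to 0$$
restricting over $X\setminus Y$ to the corresponding sequence coming from $\cN$. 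Existence and uniqueness of the lift are controlled by local $\Ext$ groups with support in $Y$, and these vanish by the local cohomology vanishing $\mathcal{H}^i_Y(\widetilde{\cN}_0)=0$ for $i\leq 2$, which follows from reflexivity of $\widetilde{\cN}_0$ combined with $\codim Y\geq 3$.

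Finally, set $\widetilde{\cN}:=\varprojlim_n\widetilde{\cN}_n$; this is $\hbar$-adically complete, and its coherence over $\cO^\hbar_X$ follows from coherence of $\widetilde{\cN}_0$ together with Mittag-Leffler for the surjective system $(\widetilde{\cN}_n)$, via a standard Nakayama-type lifting of local generators from modulo $\hbar$. By construction $\widetilde{\cN}|_{X\setminus Y}\cong\cN$, so $\widetilde{\cN}\simeq j_\ast\cN$ is coherent. The main obstacle is the descent of reflexivity in the first step: although $\cO^\hbar$ is well-behaved as an $\hbar$-adic ring, the identification $(\cN^{\vee\vee})_0\cong\cN_0^{\vee\vee}$ is not automatic, and one must carefully control the base change of $\mathcal{H}om$ along $\cO^\hbar\to\cO$ to prevent the appearance of non-reflexive points of $\cN_0$ that are invisible to $\cN$; once this is secured, the inductive lift and the passage to the limit follow standard patterns from formal deformation theory.
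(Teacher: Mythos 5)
First, a point of comparison: the paper does not prove this statement at all — it is quoted from Kashiwara--Vilonen \cite[Theorem 1.6]{KV}, where the proof occupies a substantial part of their article — so there is no internal argument to measure yours against; it must stand on its own. It does not, and the fatal gap is exactly the step you flag as ``the main obstacle'': reflexivity does \emph{not} descend modulo $\hbar$. The base-change map $\fHom_{\cO^\hbar}(\cN,\cO^\hbar)\otimes_{\cO^\hbar}\cO\to\fHom_{\cO}(\cN_0,\cO)$ is injective with cokernel the $\hbar$-torsion of $\fExt^1_{\cO^\hbar}(\cN,\cO^\hbar)$, which need not vanish, and even when it does the second dualization need not commute with reduction. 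Concretely, on an open subset of $\C^3$ let $\cN$ be the second syzygy of the skyscraper $\cO^\hbar/(x,y,z,\hbar)$ at a point $p$, i.e.\ the kernel of the appropriate Koszul differential between free $\cO^\hbar$-modules. This is a coherent reflexive $\cO^\hbar$-module (any second syzygy is reflexive) with $\operatorname{depth}\cN_p=2$ over the four-dimensional regular local ring $\cO^\hbar_p$; since $\hbar$ is $\cN$-regular, $\operatorname{depth}(\cN/\hbar\cN)_p=1$, whereas a reflexive module over the three-dimensional regular local ring $\cO_p$ satisfies $(S_2)$ and hence has depth $\geq 2$ at $p$. So $\cN_0$ is not reflexive and Theorem \ref{thm:FGTS} cannot be applied to it; replacing $\cN_0$ by its double dual destroys the compatibility with the tower $\cN/\hbar^{n+1}\cN$ that your induction requires.

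Two further steps are also unjustified. The vanishing $\Hn^i_Y(\widetilde{\cN}_0)=0$ for $i\leq 2$ does not follow from reflexivity together with $\codim Y\geq 3$: reflexivity gives only $(S_2)$, hence $\Hn^0_Y=\Hn^1_Y=0$, while $\Hn^2_Y=0$ would require depth $\geq 3$ along $Y$ (the same syzygy module, taken over $\cO_{\C^3}$, is reflexive with $\Hn^2_{\{0\}}\neq 0$). And coherence of $\varprojlim_n\widetilde{\cN}_n$ over $\cO^\hbar$ is not a formal Mittag--Leffler consequence of coherence of $\widetilde{\cN}_0$; one needs the tower to be the genuine $\hbar$-adic tower of its limit, which is part of what must be proved. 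These failures are not incidental: they are precisely why Kashiwara and Vilonen do not argue by reduction modulo $\hbar$, but instead rework the Frisch--Guenot/Siu machinery directly over $\cO[[\hbar]]$, treating $\hbar$ as an extra formal direction and combining it with their codimension-two submodule extension theorem. If you want a complete proof, you should follow their route rather than the mod-$\hbar$ dévissage.
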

The techniques involved in the proof of this result are inspired by ideas coming from deformation quantization (see \cite{KS3}).

Deformation Quantization modules (DQ-modules for short) on symplectic manifolds provide a generalization of microdifferential modules to arbitrary symplectic manifolds (see \cite{KS3}). It is therefore natural to try to extend the codimension-three conjecture to holonomic DQ-modules, and that is what we do here. 

The case of DQ-modules is strictly more general than the case of microdifferential modules. Indeed, in forthcoming work, we will explain how to recover the codimension-three conjecture for formal microdifferential operators from the statement for DQ-modules (the converse is not true). Although, our result is more general and does not depend on the codimension-three conjecture for microdifferential modules, it relies fundamentally on the tools elaborated by Kashiwara and Vilonen and our proof follows their general strategy. The main difference between the two approaches is that they are working in a conical setting whereas we are not. Hence, we have to implement their strategy differently. 

One of the important aspects of the codimension-three conjecture for microdifferential modules is that it provides essential information on the structure of the stack of microlocal perverse sheaves through the microlocal Riemann-Hilbert correspondence (see \cite{A1}, \cite{A2}, \cite{W}). However, the interpretation of the codimension-three conjecture for holonomic DQ-modules in terms of perverse sheaves is not yet clear since the analogue of microlocal perverse sheaves is not known for DQ-modules. It is an interesting problem to define such objects.

Here are the precise statements of the results we are proving.
\begin{thm}\label{thm:codim}
Let $X$ be a complex manifold endowed with a DQ-algebroid stack $\cA_X$ such that the associated Poisson structure is symplectic. Let $\Lambda$ be a closed Lagrangian analytic subset of $X$ and $Y$ a closed analytic subset of $\Lambda$ such that $\codim_\Lambda Y \geq 3$. Let $\cM$ be a holonomic $(\cA^{loc}_X|_{X \setminus Y})$-module, whose support is contained in $\Lambda \setminus Y$. Assume that $\cM$ has an $\cA_X|_{X \setminus Y}$-lattice. Then $\cM$ extends uniquely to a holonomic module defined on $X$ whose support is contained in $\Lambda$. 
\end{thm}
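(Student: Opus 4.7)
I would follow the general strategy of Kashiwara and Vilonen in \cite{KV}, using Theorem \ref{thm:codimcom} as the key analytic input, adapted to the non-conical DQ-setting. For \textbf{uniqueness}, let $\tilde\cM_1$ and $\tilde\cM_2$ be two holonomic extensions with support in $\Lambda$. The adjunction morphisms $\tilde\cM_i \to j_\ast j^{-1}\tilde\cM_i = j_\ast\cM$ have kernels and cokernels which are coherent $\cA^{loc}_X$-modules supported in $Y$. Since holonomic DQ-modules form a thick abelian subcategory of coherent DQ-modules, these kernels and cokernels are themselves holonomic; but a non-zero holonomic DQ-module has Lagrangian support of dimension $\dim_\C\Lambda$, whereas a module supported in $Y$ has support of dimension at most $\dim_\C\Lambda-3$, so they must vanish. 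Hence $\tilde\cM_1\simeq j_\ast\cM\simeq\tilde\cM_2$ canonically, which proves uniqueness and at the same time identifies the extension with $j_\ast\cM$.

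For \textbf{existence}, it suffices to construct a coherent $\cA_X$-module $\tilde\cL$ supported in $\Lambda$ whose $\cA^{loc}_X$-localization restricts to $\cM$ on $X\setminus Y$. I would first replace the given lattice $\cL$ by its reflexive hull $\cL^{\ast\ast}$ inside coherent $\cA_X|_{X\setminus Y}$-modules; this does not change $\cM$. The goal then becomes to show that $j_\ast\cL^{\ast\ast}$ is a coherent $\cA_X$-module, which is a local question on $X$. Locally $\cA_X$ is a star-algebra of the form $(\cO_X^\hbar,\star)$, so $\cL^{\ast\ast}$ is a coherent reflexive $\cO_X^\hbar|_{X\setminus Y}$-module, and Theorem \ref{thm:codimcom} provides the coherence of $j_\ast\cL^{\ast\ast}$ over $\cO_X^\hbar$. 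The $\cA_X|_{X\setminus Y}$-action is encoded by an $\cO_X^\hbar$-linear morphism which, by a further application of Theorem \ref{thm:codimcom}, extends uniquely through $j_\ast$, producing a canonical $\cA_X$-module structure on the extension. The support of this extension lies in $\Lambda$ since $\Lambda$ is closed in $X$ and $\Supp j_\ast\cF\subset\overline{\Supp\cF}$; holonomicity of $\tilde\cM:=\cA^{loc}_X\otimes_{\cA_X}\tilde\cL$ then follows from the involutivity theorem for coherent DQ-modules together with the Lagrangian dimension condition.

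The \textbf{main obstacle} is ensuring that reflexivity over the non-commutative $\cA_X$ matches reflexivity over the commutative $\cO_X^\hbar$, and that the $\star$-action extends correctly to the reflexive hull. The core novelty compared to \cite{KV} is the absence of a $\C^\times$-action on $X$: one cannot perform conical reductions along the order filtration of $\cE_X$ and must instead work directly with the $\hbar$-adic filtration and local Darboux charts for the DQ-algebroid, exploiting the flatness of $\cA_X$ over $\cO_X^\hbar$ to propagate coherence statements across the non-commutative layer. Once these technical points are settled, the global $\tilde\cL$ is obtained from the local extensions by gluing, via the uniqueness statement established in the first paragraph.
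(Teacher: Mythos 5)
Your uniqueness argument is in the right spirit but the mechanism is off: the cokernel of $\tilde\cM_i \to j_\ast j^{-1}\tilde\cM_i$ is $\Hn^1_Y(\tilde\cM_i)$, which is not a priori coherent (you do not yet know $j_\ast\cM$ is coherent), so you cannot invoke thickness of holonomic modules to kill it. The paper instead uses the vanishing $\Hn^j_Y(\tilde\cM_i)=0$ for $j<\codim_\Lambda Y$ (Lemma \ref{lem:vanishing}, which follows from $\fExt^j_{\cA_X^{loc}}(\tilde\cM_i,\cA_X^{loc})=0$ for $j>d_X/2$ together with Lemma \ref{lem:annulation}); since $\codim_\Lambda Y\geq 3$, both $\Hn^0_Y$ and $\Hn^1_Y$ vanish and $\tilde\cM_i\simeq j_\ast\cM$. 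This part is repairable.

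The existence part, however, rests on a step that fails. You assert that, since $\cA_X$ is locally a star-algebra $(\cO_X^\hbar,\star)$, the reflexive hull $\cL^{\ast\ast}$ of the lattice is a coherent reflexive $\cO_{X\setminus Y}^\hbar$-module to which Theorem \ref{thm:codimcom} applies directly. But a coherent $\cA_X$-module is not a coherent $\cO_X^\hbar$-module for the commutative product: $\cO_X^\hbar$ is only the underlying sheaf of $\cA_X$, not a central subalgebra, and a holonomic module is supported on a Lagrangian of codimension $d_X/2$, so it cannot be coherent (let alone reflexive) over $\cO_X^\hbar$ on all of $X\setminus Y$. Reflexivity over the non-commutative $\cA_X$ and over the commutative $\cO_X^\hbar$ live on different spaces and cannot be matched by fiat; this is not a technical point to be "settled" but the crux of the whole problem. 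The paper's route is unavoidable here: one chooses (via tangent-cone and Darboux arguments, Propositions \ref{prop:coneiso}--\ref{prop:mapfinie}) a linear Lagrangian $\lambda$ so that the projection $\rho\colon V\to M=E/\lambda$ is \emph{finite on $\Lambda$}; one then pushes forward by $\rho$, where the subalgebra $\cB_M\subset\rho_\ast\cA_X$ makes $\rho_\ast\cN'$ a coherent $\cO_M^\hbar$-module; the duality result (Proposition \ref{prop:dual}) identifies $\rho_\ast$ of the $\cA$-bidual lattice $\cN'=\fExt^n(\fExt^n(\cN,\cA),\cA)$ with the $\cO_M^\hbar$-bidual of $\rho_\ast\cN$, which is therefore reflexive over $\cO_{M\setminus N}^\hbar$; Theorem \ref{thm:codimcom} is applied on $M$, not on $X$; and the coherence criterion for $\rho$-finite modules (Proposition \ref{prop:finitness}) transports coherence back to $\cA_X$. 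Without the finite Lagrangian projection and the duality statement comparing $\fExt_{\cA}$ with $\fExt_{\cO^\hbar}$ through $\rho_\ast$, the reduction to the commutative theorem does not go through.
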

\noindent and
\begin{thm} \label{thm:subcodim}
Let $X$ be a complex manifold endowed with a DQ-algebroid stack $\cA_X$ such that the associated Poisson structure is symplectic. Let $\Lambda$ be a closed Lagrangian analytic subset of $X$ and $Y$ a closed analytic subset of $\Lambda$ such that $\codim_\Lambda Y \geq 2$. Let $\cM$ be a holonomic $\cA_X^{loc}$-module whose support is contained in $\Lambda$ and let $\cM_1$ be an $\cA_{X}^{loc}|_{X \setminus Y}$-submodule of $\cM|_{X \setminus Y}$. Then $\cM_1$ extends uniquely to a holonomic $\cA_X^{loc}$-submodule of $\cM$.
\end{thm}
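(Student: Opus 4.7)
The plan is to construct the extension as
\[
\cN := j_\ast \cM_1 \cap \cM \subset j_\ast j^{-1}\cM,
\]
where $j : X \setminus Y \hookrightarrow X$ is the open embedding and the intersection is formed inside $j_\ast j^{-1}\cM$. This is well-defined because $\cM \hookrightarrow j_\ast j^{-1}\cM$ is injective: its kernel $\Gamma_Y(\cM)$ is a holonomic $\cA_X^{loc}$-submodule of $\cM$ with support in $Y$, but $\dim Y \leq \dim \Lambda - 2 < \tfrac{1}{2}\dim X$, whereas the support of any non-zero holonomic $\cA_X^{loc}$-module is a pure Lagrangian subvariety of dimension $\tfrac{1}{2}\dim X$, so $\Gamma_Y(\cM) = 0$. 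The same dimension argument will yield uniqueness: given a second holonomic submodule extension $\cN' \subset \cM$ of $\cM_1$, the subquotient $\cN'/(\cN \cap \cN')$ is a holonomic module supported in $Y$, hence zero, so $\cN' \subset \cN$, and by symmetry $\cN = \cN'$.

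For existence the question is local on $X$. On a small open $U$ one fixes an $\cA_X|_U$-lattice $\cM_0$ of $\cM|_U$ (the local existence of lattices for holonomic DQ-modules is standard), sets $\cM_{1,0} := \cM_1 \cap \cM_0|_{U \setminus Y}$, which is a lattice of $\cM_1|_{U \setminus Y}$, and defines $\cN_0 := j_\ast \cM_{1,0} \cap \cM_0 \subset \cM_0$. By construction $\cN_0|_{U \setminus Y} = \cM_{1,0}$, so once $\cN_0$ is known to be coherent over $\cA_X|_U$, the localization $\cA_X^{loc} \otimes_{\cA_X} \cN_0$ is the desired coherent holonomic $\cA_X^{loc}|_U$-submodule of $\cM|_U$ extending $\cM_1|_{U \setminus Y}$, and the local pieces glue by the uniqueness statement.

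The essential technical step, and the main obstacle, is the coherence of $\cN_0$. Following the strategy of \cite{KV} but adapted to the non-conic DQ-algebroid setting, I would transfer the question to a codimension-$2$ extension statement for coherent \emph{submodules} of a coherent $\cO_\Lambda^\hbar$-module — the easier cousin of Theorem \ref{thm:codimcom} which requires no reflexivity and already holds for $\codim_\Lambda Y \geq 2$. Reducing modulo $\hbar$ one invokes the classical extension of coherent $\cO_\Lambda$-submodules of a coherent $\cO_\Lambda$-module across an analytic subset of codimension $\geq 2$; then an $\hbar$-adic completeness and Nakayama-type argument lifts coherence back from $\cO_\Lambda^\hbar$ to $\cA_X$. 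The delicate point, which does not arise in the microdifferential case treated in \cite{KV}, is carrying out this local reduction from the DQ-algebroid to $\cO^\hbar$-modules purely by analytic means, without the conic structure; in particular, one must verify that the chosen lattice $\cM_0$ satisfies the depth condition (essentially, absence of sections supported on $Y$) that makes the $\cO^\hbar$-extension theorem applicable.
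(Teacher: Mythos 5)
Your proposal is correct and follows essentially the route the paper takes: the paper reformulates the statement in terms of the quotient $\cM_2=\cM|_{X\setminus Y}/\cM_1$ (so that the extension is $\ker(\cM\to j_\ast\cM_2)$, which is exactly your $j_\ast\cM_1\cap\cM$) and then declares the proof ``formally the same'' as \cite[Theorem 5.1]{KV}, i.e.\ your scheme of local lattices, pushforward by a finite Lagrangian projection to the base $M$ (so the commutative model is $\cO_M^\hbar$ rather than $\cO_\Lambda^\hbar$), the codimension-two extension of coherent $\cO^\hbar$-submodules, and the coherence criterion of Proposition \ref{prop:finitness}. The only substantive advantage of the quotient formulation is that coherence of the induced lattice of $\cM_1$ is immediate, since $\cM_1\cap\cM_0=\ker(\cM_0\to\cM_2)$ and the image of $\cM_0$ in $\cM_2$ is coherent by \cite[Lemma 2.3.13]{KS3}, whereas your direct intersection $\cM_1\cap\cM_0|_{U\setminus Y}$ needs that extra line of justification.
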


We hope that the submodule version of the codimension-three conjecture for DQ-modules may have applications to the representation theory of Cherednik algebras, via the localization theorem due to Kashiwara and Rouquier (see \cite{KR}).

Let us describe briefly the proof of the above statements. As we already remarked, we follow the general strategy of Kashiwara and Vilonen (see \cite{KV}). Keeping the notations of the theorems \ref{thm:codim} and \ref{thm:subcodim} and denoting by $j:X \setminus Y \to X$ the open embedding of $X\setminus Y$ into $X$, the problem essentially amount to show that $j_\ast \cM$ and $j_\ast \cM_1$ are coherent. This is a local question. By adapting a standard technique in several complex variables in \cite{SKK} to the case of DQ-modules, proving the coherence of a DQ-module amounts to proving the coherence of the pushforward of this module by a certain projection, the restriction of which to the support of the module is finite. Taking the pushforward by the aforementioned projection reduce the non-commutative problem to a commutative one. Then it is possible to use Theorem \ref{thm:codimcom} to establish the coherence of the direct image of $j_\ast \cM$ and $j_\ast \cM_1$ by this specific projection.

This paper is organised as follows. In the second section, we review some notions of the theory of DQ-modules and prove a few facts concerning holonomic DQ-modules that we will need later on. In the third section, we adapt the coherence criterion from \cite{SKK} corresponding to Proposition 4.1 and 4.4 of \cite{KV} to the case of DQ-modules. In \cite{KV}, it is indicated that the statements for microdifferential modules can be proved by using the Weierstrass preparation and division theorems for microdifferential operators. In the case of DQ-modules, we follow a different route. We first recall the corresponding statement for coherent analytic sheaves and then lift these statements to the world of DQ-modules. In the fourth section, we prove the analogue of \cite[Proposition 4.2]{KV} for DQ-modules. The proof is significantly more complicated than in the case of microdifferential modules since the duality theory for DQ-modules is entirely stated in terms of kernels and not of morphisms. Thus, one has to identify the actions of all the kernels involved in the duality with their corresponding operations on sheaves. In the short fifth section, we adapt the reduction technique of \cite{SKK} to a non-conic setting and prove the version of the codimension-three conjecture for holonomic DQ-modules in the sixth section. Finally, in the last section, we present a conjecture due to Pierre Schapira which extends the codimension-three conjecture for holonomic DQ-modules to all Poisson manifolds.\\

\noindent{\textbf{Acknowledgments.}} I would like to express my gratitude to Masaki Kashiwara and Pierre Schapira for many enlightening explanations. It is a pleasure to thanks Iain Gordon for his support and many useful discussions as well as Michael Wemyss, Thibault Lemanissier, Chris Dodd, Gwyn Bellamy and Evgeny Shinder for useful conversation and Will Donovan for proof-reading part of this paper.

\section{DQ-modules}
In this section, we recall and prove some results concerning the general theory of DQ-modules.

We denote by $\C^\hbar:=\C[[\hbar]]$ the ring of formal power series with coefficients in $\C$ and by $\C^{\hbar,loc}:=\C((\hbar))$ the field of Laurent series with complex coefficients.  In all this section, $(X,\cO_X)$ is a complex manifold.

\subsection{DQ-algebras}
In this subsection, we review some facts concerning DQ-algebras. 

We define the following sheaf of $\C^\hbar$-algebras
\begin{equation*}
\cO_X^\hbar:=\varprojlim_{n \in \N} \cO_X \te_\C (\C^\hbar / \hbar^n \C^\hbar).
\end{equation*}

\begin{defi}
A star-product denoted $\star$ on $\cO_X^\hbar$ is a $\C^\hbar$-bilinear multiplicative law satisfying
\begin{equation*}
f \star g = \sum_{i \geq 0} P_i(f,g) \hbar^i \;\; \textnormal{for every} \;f, \; g \in \cO_X,
\end{equation*}
where the $P_i$ are bi-differential operators such that for every $f, g \in \cO_X, P_0(f,g)=fg$ and 
$P_i(1,f)=P_i(f,1)=0$ for $i>0$. The pair $(\cO_X^\hbar, \star)$ is called a star-algebra. 
\end{defi}

\begin{example}\label{ex:Moyal}
Let $U$ be an open subset of $T^\ast\C^{n}$ endowed with a symplectic coordinate system $(x;u)$ with $x=(x_1,\ldots,x_n)$ and  $u=(u_1,\ldots,u_n)$. Then there is  a star-algebra $(\cO_{U} ^\hbar, \star)$ on $U$ given by
\begin{equation*}
f \star g = \sum_{\alpha \in \N^n} \dfrac{\hbar^{|\alpha|}}{\alpha !} (\partial^\alpha_u f) (\partial^\alpha_x g).
\end{equation*}
This star-product is called the Moyal-Weyl star-product.
\end{example}

\begin{Rem}\label{rem:reduction}
On a symplectic manifold all the star-algebras are locally isomorphic.
\end{Rem}

\begin{defi}
\noindent (i) A DQ-algebra $\cA_X$ on $X$ is a $\C^\hbar$-algebra locally isomorphic to a star-algebra as a $\C^\hbar$-algebra.

\noindent (ii) For a DQ-algebra $\cA_X$, we set $\cA_X^{loc}:=\C^{\hbar, loc} \te_{\C^\hbar} \cA_X$.
\end{defi}

There is a unique $\C$-algebra isomorphism $\cA_X / \hbar \cA_X \stackrel{\sim}{\longrightarrow} \cO_X$. We write $\sigma_0: \cA_X \twoheadrightarrow \cO_X$ for the epimorphism of $\C$-algebras defined by
\begin{equation*}
\cA_X \to \cA_X / \hbar \cA_X \stackrel{\sim}{\longrightarrow} \cO_X.
\end{equation*}
This induces a Poisson bracket $\lbrace \cdot,\cdot \rbrace$ on $\cO_X$ defined as follows:
\begin{equation*}
\textnormal{for every $a, \; b \in \cA_X$}, \; \lbrace \sigma_0(a),\sigma_0(b) \rbrace=\sigma_0(\hbar^{-1}(ab-ba)).
 \end{equation*}
\begin{Notation}
If $\cA_X$ is a DQ-algebra, we denote by $\cA_{X^a}$ the opposite algebra $\cA_X^{\opp}$. 
If $X$ and $Y$ are two manifolds endowed with DQ-algebras $\cA_X$ and $\cA_Y$, then $X \times Y$ is canonically endowed with the DQ-algebra $\cA_{X \times Y}:=\cA_X \underline{\boxtimes} \cA_Y$ (see \cite[§2.3]{KS3}).
\end{Notation}
\subsection{DQ-modules}

We refer the reader to \cite{KS3} for an in-depth study of DQ-modules. In this section, we briefly recall some general results that we will subsequently use.

Let $(X,\cO_X)$ be a complex manifold endowed with a DQ-algebroid stack $\cA_X$. We denote by $\Mod(\cA_X)$ the Grothendieck category of modules over $\cA_X$, by $\Der(\cA_X)$ its derived category, by $\Mod_\coh(\cA_X)$ the Abelian category of coherent modules over $\cA_X$ and by $\Der^b_{\coh}(\cA_X)$ the bounded derived category of DQ-modules with coherent cohomology and if $Z$ is a closed analytic subset of $X$ we denote by $\Der^b_{\coh, Z}(\cA_X)$ the full subcategory of $\Der^b_{\coh}(\cA_X)$ the objects of which are supported in $Z$. We use similar notations for $\cA_X^{loc}$-modules.

If $\cA_X$ is a DQ-algebroid stack one can associate to it an algebroid stack $\gr_\hbar \cA_X$ (see \cite[p.69]{KS3}). If $\cA_X$ is a DQ-algebra, then $\gr_\hbar \cA_X \simeq \cO_X$. This provides a functor
\begin{equation*}
\gr_\hbar:\Der(\cA_X) \to \Der(\gr_\hbar \cA_X), \; \cM \mapsto \C \Lte_{\C^\hbar} \cM.
\end{equation*}
We denote by $for$ the forgetful functor $for:\Mod(\cA_X^{loc}) \to \Mod(\cA_X)$ the left adjoint of which is given by
\begin{align*}
(\cdot)^{loc}:& \Mod(\cA_X) \to \Mod(\cA_X^{loc})\\
              & \cN \mapsto \cA_X^{loc} \te_{\cA_X} \cN .
\end{align*}

\begin{defi}
Let $\cM \in \Mod(\cA_X)$. We say that
\begin{enumerate}[(i)]
\item $\cM$ has no $\hbar$-torsion if the map $\cM \stackrel{\hbar}{\to}\cM$ is a monomorphism,
\item the module $\cM$ is of uniform $\hbar$-torsion if there exists $k \in \N$ such that  $\hbar^k \cM=(0)$. The smallest such a $k \in \N$ is called the index of $\hbar$-torsion of $\cM$ and is denoted $\torh(\cM)$,
\item $\cM$ is of $\hbar$-torsion if it is locally of uniform $\hbar$-torsion.
\end{enumerate}
\end{defi}
Finally, we recall the following lemma.
\begin{lemme}
Let $\cM \in \Mod_{\coh}(\cA_X)$. Then $\cM$ is the extension of a module without $\hbar$-torsion by  a $\hbar$-torsion module.
\end{lemme}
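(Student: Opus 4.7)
The plan is to construct the short exact sequence
\begin{equation*}
0 \to \cT \to \cM \to \cM/\cT \to 0,
\end{equation*}
where $\cT \subset \cM$ is the $\hbar$-torsion subsheaf, i.e. the subsheaf associated to the presheaf $U \mapsto \{s \in \cM(U) \mid \hbar^n s = 0 \text{ for some } n \in \N\}$; equivalently $\cT = \bigcup_{n \geq 0} \cK_n$ with $\cK_n := \Ker(\hbar^n : \cM \to \cM)$. The task is then to verify that $\cT$ is a coherent $\hbar$-torsion $\cA_X$-submodule and that $\cM/\cT$ has no $\hbar$-torsion.

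The essential input is that $\cA_X$ is a coherent sheaf of rings with Noetherian stalks (see \cite{KS3}); in particular every $\cK_n$ is a coherent $\cA_X$-submodule of $\cM$, and the stalks of the ascending chain $\cK_1 \subset \cK_2 \subset \cdots$ stabilize at every point $x \in X$, say at some $k = k(x)$ with $\cK_{k,x} = \cK_{k+1,x}$. Because the quotient $\cK_{k+1}/\cK_k$ is coherent, the vanishing of its stalk at $x$ propagates to an open neighborhood $U$ of $x$, giving $\cK_k|_U = \cK_{k+1}|_U$. A short induction on $m$ then shows that $\cK_k|_U = \cK_m|_U$ for every $m \geq k$: if $\hbar^{m+1}s = 0$, then $\hbar^m(\hbar s) = 0$, so $\hbar s \in \cK_m = \cK_k$ on $U$, whence $\hbar^{k+1}s = 0$, and by induction $s \in \cK_k$.

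Consequently $\cT|_U = \cK_k|_U$ is coherent and annihilated by $\hbar^k$, so $\cT$ is a coherent submodule of $\cM$ which is locally of uniform $\hbar$-torsion, hence $\hbar$-torsion. Torsion-freeness of $\cM/\cT$ is then automatic: a local section $\bar s$ with $\hbar \bar s = 0$ lifts to $s$ with $\hbar s \in \cT$, so on a neighborhood where $\hbar^k$ annihilates $\cT$ one has $\hbar^{k+1}s = 0$, forcing $s \in \cT$ and $\bar s = 0$. The main obstacle is the local stabilization step: it relies crucially on $\cA_X$ being a coherent sheaf of rings with Noetherian stalks and on the fact that coherence allows pointwise equalities of coherent submodules to be spread to a neighborhood; without these properties, the torsion subsheaf could fail to be coherent or to have a bounded torsion index on any neighborhood.
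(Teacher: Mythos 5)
Your argument is correct. The paper does not actually prove this lemma --- it merely recalls it from Kashiwara--Schapira's \emph{Deformation quantization modules} --- and your proof is precisely the standard one behind that reference: the kernels $\cK_n=\Ker(\hbar^n\colon\cM\to\cM)$ are coherent because $\hbar$ is central and $\cA_X$ is a coherent (indeed Noetherian) sheaf of rings, the Noetherian stalks force the chain to stabilize pointwise, and coherence spreads the stabilization to a neighbourhood, so the torsion subsheaf is coherent and locally of uniform $\hbar$-torsion while the quotient is $\hbar$-torsion-free. All the steps, including the small induction showing $\cK_k|_U=\cK_m|_U$ for $m\geq k$, check out.
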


\subsubsection{Duality theory for DQ-modules}
 We briefly recall the main features of the duality theory for DQ-modules and refer the reader to \cite[Ch. 6]{KS3} for a detailed study.

Let X be a complex manifold endowed with a DQ-algebroid stack $\cA_X$.
We denote by $\delta: X \hookrightarrow X \times X$ the diagonal embedding of $X$ into $X \times X$ and set $\cC_X:=\delta_\ast \cA_X$. The object $\cC_X$ is an $\cA_{X \times X^a}$-module simple along the diagonal. 

We denote by $\w_X$ the dualizing complex for $\cA_X$-modules. It is a bi-invertible $(\cA_X \otimes \cA_{X^a})$-module. Since the category of bi-invertible $(\cA_X \otimes \cA_{X^a})$-modules is equivalent to the category of $\cA_{X \times X^a}$-modules simple along the diagonal we will regard $\w_X$ as an $\cA_{X \times X^a}$-module simple along the diagonal and will still denote in by $\w_X$. By \cite[Theorem 6.2.4]{KS3}, we have%
\begin{lemme}
Let $X$ be a complex symplectic manifold. There is a canonical isomorphism 
\begin{equation*}
\w_X \simeq \cC_X[d_X]
\end{equation*}
of $\cA_{X \times X^a}$-modules.
\end{lemme}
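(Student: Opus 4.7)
The approach is to reduce the identification to a classical fact about the canonical bundle of a symplectic manifold, and then lift to the DQ level. First, by Remark~\ref{rem:reduction}, the DQ-algebra $\cA_X$ is locally isomorphic, as a $\C^\hbar$-algebra, to a star-algebra; combined with the holomorphic Darboux theorem, we may work in a symplectic coordinate chart with the Moyal--Weyl star product. The diagonal embedding $\delta: X \hookrightarrow X \times X^a$ is Lagrangian since $X^a$ carries the opposite symplectic form, and both $\cC_X = \delta_\ast \cA_X$ and $\w_X$ are $\cA_{X \times X^a}$-modules simple along this diagonal.

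Next, I would compare the two sides after applying $\gr_\hbar$. Since $\gr_\hbar \cA_X \simeq \cO_X$, one gets $\gr_\hbar \cC_X \simeq \delta_\ast \cO_X$, while $\gr_\hbar \w_X$ is identified with the classical dualizing complex $\delta_\ast \Omega_X^{d_X}[d_X]$ for $\cO_X$-modules (with $d_X = 2n$). The essential classical observation is that on a symplectic manifold the top power $\sigma^{\wedge n}/n!$ of the symplectic form $\sigma$ provides a canonical nowhere vanishing section of $\Omega_X^{d_X}$, yielding a canonical isomorphism $\Omega_X^{d_X} \simeq \cO_X$ of $\cO_X$-modules. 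This furnishes a canonical isomorphism $\gr_\hbar \w_X \simeq \gr_\hbar(\cC_X[d_X])$ of $\gr_\hbar \cA_{X \times X^a}$-modules simple along the diagonal.

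The final step is to lift this identification to the quantized level. The key principle is that a simple $\cA_{X \times X^a}$-module along a Lagrangian submanifold is determined, up to canonical isomorphism, by its class modulo $\hbar$ together with the trivialization of the corresponding classical line bundle; equivalently, the groupoid of simple lifts of a given classical simple module along a Lagrangian is equivalent to the groupoid of line bundles on that Lagrangian. Applying this to the diagonal in $X \times X^a$ and to the trivialization produced in the previous step yields the desired canonical isomorphism $\w_X \simeq \cC_X[d_X]$ of $\cA_{X \times X^a}$-modules.

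The main obstacle is precisely this last step: one must verify that the classical trivialization by $\sigma^{\wedge n}/n!$ really does extend in a canonical way to a morphism of $\cA_{X \times X^a}$-modules, not merely of their $\gr_\hbar$. The obstructions to such extensions are controlled by Hochschild-type cohomology groups governing deformations of simple modules along Lagrangians; in the algebroid-stack framework this also requires careful tracking of gerbes and twists, which is carried out in detail in Chapter 6 of \cite{KS3}. Once this structural input is granted, the isomorphism of the lemma is essentially formal.
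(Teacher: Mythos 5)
The paper does not actually prove this lemma: it is quoted directly from \cite[Theorem 6.2.4]{KS3} (the sentence immediately preceding the statement reads ``By [KS3, Theorem 6.2.4], we have''). So your proposal has to be measured against the proof of the cited result. Your first two steps are fine: the diagonal of $X\times X^a$ is Lagrangian, $\gr_\hbar\cC_X\simeq\delta_\ast\cO_X$, $\gr_\hbar\w_X\simeq\delta_\ast\Omega_X^{d_X}[d_X]$, and the Liouville form $\sigma^{\wedge n}/n!$ canonically trivializes $\Omega_X^{d_X}$; this is indeed the classical shadow of the lemma.

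The genuine gap is the lifting step, and the ``key principle'' you invoke there is false as stated. A simple $\cA_{X\times X^a}$-module along a Lagrangian $\Lambda$ is \emph{not} determined up to canonical isomorphism by its class modulo $\hbar$: the automorphism group of such a module is the group of invertible sections of $\C^\hbar_\Lambda$, so even locally an isomorphism inducing a prescribed one on $\gr_\hbar$ is only determined up to multiplication by $1+\hbar(\cdots)$; and globally the lifts of a fixed classical simple module form a torsor under $H^1(\Lambda;1+\hbar\C^\hbar_\Lambda)\simeq H^1(\Lambda;\C^\hbar_\Lambda)$, which has no reason to vanish. Consequently neither the existence nor the canonicity of the quantized isomorphism follows from the classical trivialization by abstract deformation-theoretic reasoning, and the appeal to ``Hochschild-type cohomology groups'' does not establish the required vanishing. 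The argument of \cite{KS3} instead works at the quantized level throughout: one computes $\fRHom_{\cA_{X\times X^a}}(\cC_X,\cA_{X\times X^a})$ via a Koszul resolution in Moyal--Weyl coordinates to see that it is concentrated in degree $d_X$ and again simple along the diagonal, and then exhibits a canonical global morphism $\cC_X[d_X]\to\w_X$ lifting $\sigma^{\wedge n}/n!$, checking locally that it is an isomorphism. Since your write-up defers precisely this construction to ``Chapter~6 of \cite{KS3}'', it ultimately rests on the same citation as the paper, while the principle offered to bypass it is incorrect.
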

We set
\begin{align*}
\Du^\prime_{\cA_X}:&\Der(\cA_X)^{\opp} \to \Der(\cA_X)\\
 & \cM \mapsto \fRHom_{\cA_X}(\cM, \cA_{X}). 
\end{align*}
We use a similar notation for $\cA_X^{loc}$-modules.
\begin{Notation}
\begin{enumerate}[(i)]
\item
Consider a product of manifolds $X_1\times X_2 \times X_3$, we write it $X_{123}$. We denote by
$p_i$ the $i$-th projection and by $p_{ij}$ the $(i,j)$-th projection
({\em e.g.,} $p_{13}$ is the projection from 
$X_1\times X_1^a\times X_2$ to $X_1\times X_2$). 
\item We write $\A[i]$ and $\A[ij^a]$
instead of $\A[X_i]$ and $\A[X_i\times X_j^a]$  and
similarly with other products. We use the same notations 
for $\dA[_i]$.
\end{enumerate}
\end{Notation}%
Recall Definitions 3.1.2 and 3.1.3 of \cite{KS3}.
\begin{defi}
Let $\cK_i\in\Der^b(\A[ij^a])$ ($i=1,2$, $j=i+1$). 
One sets
\begin{align*}
\cK_1 \uLte_{\cA_2} \cK_2 &= 
(\cK_1\ldetens\cK_2)\Lte_{\A[22^a]} \dA[2]\\
&=p_{12}^{-1} \cK_1 \Lte_{p_{12}^{-1}\cA_{1^a2}} \cA_{123} \Lte_{p_{23^a}^{-1} \cA_{23^a}} p_{23}^{-1} \cK_2,\\
\cK_1\conv[X_2]\cK_2 &= 
\reim{p_{13}}\bl\cK_1 \uLte_{\cA_2} \cK_2\br,\\
\cK_1\sconv[X_2]\cK_2&= 
\roim{p_{13}}\bl\cK_1 \uLte_{\cA_2} \cK_2\br.
\end{align*}
\end{defi}
\begin{Rem}
\noindent(a) There is a morphism $\cK_1 \Lte_{\cA_2} \cK_2 \to \cK_1 \uLte_{\cA_2} \cK_2$ which is an isomorphism if $X_1=\pt$ or $X_3=\pt$.

\noindent(b) If the projection $p_{13}:\Supp(\cK_1) \times_{X_2} \Supp(\cK_2) \to X_1 \times X_3$ is proper then, $\cK_1\conv[X_2]\cK_2 \simeq \cK_1\sconv[X_2]\cK_2$.  
\end{Rem}
We have the following duality results.
\begin{thm}[{\cite[Theorem 3.3.6]{KS3}}]\label{thm:dualite}
Let $(X_i, \cA_i)$ $(i=1,2,3)$ be a complex manifold endowed with a DQ-algebroid $\cA_{i}$. Let $Z_i$ be a closed subset of $X_i \times X_{i+1}$ and assume that $Z_1 \times_{X_2} Z_2$ is proper over $X_1 \times X_3$. Set $Z=p_{13} (p_{12}^{-1} Z_1 \cap p_{23}^{-1} Z_2)$. Let $\cK_i \in \Der_{\coh, Z_i}^b(\cA_{i{(i+1)}^a})$  $(i=1,2)$. Then the object $\cK_1 \underset{X_2}{\circ} \cK_2$ belongs to $\Der_{\coh, Z}^b(\cA_{13^a})$  and we have a natural isomorphism
\begin{equation*}
\Du_{\cA_{1{2^a}}}^\prime(\cK_1) \underset{X_{2^a}}{\circ} \omega_{X_{2^a}} \underset{X_{2^a}}{\circ} \Du_{\cA_{2{3^a}}}^\prime(\cK_2) \stackrel{\sim}{\to} \Du_{\cA_{1{3^a}}}^\prime(\cK_1 \underset{X_2}{\circ} \cK_2).
\end{equation*}
\end{thm}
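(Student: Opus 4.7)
The plan is to construct the comparison morphism by a formal duality calculation and then prove it is an isomorphism by reducing modulo $\hbar$ to the classical coherent-sheaf duality on $X_1\times X_3$. First I would construct the morphism. There is a tautological evaluation pairing
$$\fRHom_{\cA_{12^a}}(\cK_1,\cA_{12^a}) \uLte_{\cA_{12^a}} \cK_1 \longrightarrow \cA_{12^a}$$
and analogously for $\cK_2$. Tensoring these two pairings over $\cA_{2^a}$ with $\w_{X_{2^a}}\simeq \cC_{X_{2^a}}[d_{X_2}]$ in the middle (whose role is to absorb the identification of the two $\cA_2$-actions coming from $\cK_1$ and $\cK_2$) produces a pairing between the triple product appearing on the left of the statement and $\cK_1 \uLte_{\cA_2} \cK_2$, valued in $\cA_{13^a}$. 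Adjunction then yields a morphism into $\fRHom_{\cA_{13^a}}(\cK_1 \uLte_{\cA_2} \cK_2,\cA_{13^a})$. Finally I would apply $\reim{p_{13}}$, using the properness of $Z_1 \times_{X_2} Z_2$ over $X_1\times X_3$ and the projection formula to recover the claimed arrow. The same properness hypothesis combined with stability of coherence under $\uLte$ and $\reim{p_{13}}$ for coherent DQ-modules yields the coherence of $\cK_1 \conv[X_2]\cK_2$ with support in $Z$.

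Next, to prove the morphism is an isomorphism, I would argue locally on $X_1\times X_3$ and may assume each $\cA_i$ is a star-algebra. I would apply $\gr_\hbar$ to both sides of the map and invoke $\gr_\hbar \cA_i \simeq \cO_{X_i}$. Using flatness of $\cA_i$ over $\C^\hbar$, the compatibility of $\gr_\hbar$ with $\uLte$ and $\reim{p_{13}}$, and the coincidence of $\gr_\hbar\fRHom_{\cA_i}(-,\cA_i)$ with $\fRHom_{\cO_{X_i}}(\gr_\hbar(-),\cO_{X_i})$ for coherent modules, the statement reduces to the analogous duality isomorphism for coherent $\cO$-modules under proper composition, which is a standard consequence of Grothendieck--Serre duality on complex manifolds. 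Knowing that $\gr_\hbar$ of the morphism is an isomorphism and that both source and target are coherent, bounded and complete for the $\hbar$-adic topology, a Nakayama-type argument then lifts this to an isomorphism of $\cA_{13^a}$-modules.

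The hard part will be the last step of the reduction: the functor $\fRHom_{\cA_i}(-,\cA_i)$ does not commute with $\gr_\hbar$ in general, and the naive comparison produces only a spectral sequence whose abutment is $\gr_\hbar$ of the dual. The delicate point is to exhibit locally a suitable bounded coherent resolution of each $\cK_i$ on which this spectral sequence degenerates, so that $\gr_\hbar$ really does convert the duality of coherent DQ-modules into the duality of coherent $\cO$-modules. Granting this, the interplay of $\fRHom$, $\uLte$ and $\reim{p_{13}}$ respects $\gr_\hbar$ and the classical duality statement can be invoked to conclude.
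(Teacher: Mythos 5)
The paper does not prove this theorem: it is quoted verbatim from Kashiwara--Schapira \cite[Theorem 3.3.6]{KS3} and used as a black box, so there is no in-paper argument to compare yours against. Taken on its own, your outline is a faithful reconstruction of the strategy of the cited proof: build the arrow from evaluation pairings and adjunction (with $\w_{X_{2^a}}$ inserted to make the two duals composable over $X_{2^a}$), then show it is an isomorphism by applying $\gr_\hbar$, reducing to the commutative duality for compositions of $\cO$-module kernels with proper support, and concluding by cohomological completeness together with conservativity of $\gr_\hbar$ on $\Der^b_{\coh}$. Two corrections of emphasis are in order. First, the coherence of $\cK_1\conv[X_2]\cK_2$ with support in $Z$ is not a formal ``stability of coherence under $\uLte$ and $\reim{p_{13}}$'': it is the finiteness (Grauert-type) theorem for DQ-kernels, one of the main theorems of \cite[Theorem 3.2.1]{KS3}, and has to be invoked as such rather than treated as routine. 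Second, the step you single out as the hard part --- commuting $\gr_\hbar$ past $\fRHom_{\cA_i}(\cdot,\cA_i)$ --- is not where the difficulty lies: a coherent $\cA$-module locally admits a bounded resolution by finite free modules (finiteness of the homological dimension of DQ-algebras), on which $\gr_\hbar\fRHom_{\cA}(\cK,\cA)\simeq\fRHom_{\cO}(\gr_\hbar\cK,\cO)$ holds termwise; this is precisely \cite[Proposition 1.4.5]{KS3}, which the present paper itself invokes in the proof of Lemma~\ref{lem:exttor}. No spectral-sequence degeneration argument is needed. With those two references supplied, your sketch is essentially the proof given by Kashiwara and Schapira.
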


\begin{defi}
\begin{enumerate}[(i)]
\item An $\cA_X$-module $\cN$ is a submodule of an $\cA_X^{loc}$-module $\cM$ if there exist a monomorphism $\mu:\cN \to for(\cM)$ in $\Mod(\cA_X)$.

\item Let $\cN$ be an $\cA_X$-submodule of an $\cA_X^{loc}$-module $\cM$. We say that $\cN$ generates $\cM$ if $\mu^{loc}$ is an isomorphism.

\item A coherent $\cA_X$-submodule of a coherent $\cA_X^{loc}$-module $\cM$ is called an $\cA_X$-lattice of $\cM$ if $\cN$ generates $\cM$.

\end{enumerate}
\end{defi}

We recall the definition of good module and refer the reader to \cite[p. 74]{KS3} for a thorough treatment of this notion.
\begin{defi}
(i) A coherent $\cA_X^{loc}$-module is good if, for any relatively compact open subset $U$ of $X$, there exists an $\cA_X |_U$-lattice of $\cM|_U$. 

\noindent (ii) One denotes by $\Mod_{\gd}(\cA_X^{loc})$ the full subcategory of $\Mod_{\coh}(\cA_X^{loc})$ the objects of which are the good modules. 

\noindent (iii) One denotes by $\Der^b_{\gd}(\cA_X^{loc})$ the full subcategory of $\Der^b_{\coh}(\cA_X^{loc})$ the objects of which have good cohomology. 
\end{defi}

The Theorem \ref{thm:dualite} extends to $\cA_X^{loc}$-modules by replacing the category $\Der^b_{\coh}(\cA_X)$ with $\Der^b_{\gd}(\cA^{loc}_X)$. Hence, we have the following statement for $\cA_X^{loc}$-modules.

\begin{thm}\label{thm:dualiteloc}
Let $(X_i, \cA_i)$ $(i=1,2,3)$ be a complex manifold endowed with a DQ-algebroid $\cA_{i}$. Let $Z_i$ be a closed subset of $X_i \times X_{i+1}$ and assume that $Z_1 \times_{X_2} Z_2$ is proper over $X_1 \times X_3$. Set $Z=p_{13} (p_{12}^{-1} Z_1 \cap p_{23}^{-1} Z_2)$. Let $\cK_i \in \Der_{\gd, Z_i}^b(\cA^{loc}_{i{(i+1)}^a})$  $(i=1,2)$. Then the object $\cK_1 \underset{X_2}{\circ} \cK_2$ belongs to $\Der_{\gd, Z}^b(\cA^{loc}_{13^a})$  and we have a natural isomorphism
\begin{equation*}
\Du_{\cA^{loc}_{1{2^a}}}^\prime(\cK_1) \underset{X_{2^a}}{\circ} \omega^{loc}_{X_{2^a}} \underset{X_{2^a}}{\circ} \Du_{\cA^{loc}_{2{3^a}}}^\prime(\cK_2) \stackrel{\sim}{\to} \Du_{\cA^{loc}_{1{3^a}}}^\prime(\cK_1 \underset{X_2}{\circ} \cK_2).
\end{equation*}
\end{thm}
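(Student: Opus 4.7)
The plan is to reduce the statement to Theorem \ref{thm:dualite} by means of the localization functor $(\cdot)^{loc}$. Both of the assertions — that $\cK_1 \underset{X_2}{\circ} \cK_2$ belongs to $\Der^b_{\gd,Z}(\cA^{loc}_{13^a})$, and that the natural morphism in the statement is an isomorphism — can be checked locally on $X_1 \times X_3$. After shrinking $X_1$ and $X_3$ to relatively compact open subsets and using the local existence of lattices for the cohomology sheaves of the $\cK_i$ (together with a standard truncation argument), I obtain complexes $\cL_i \in \Der^b_{\coh,Z_i}(\cA_{i(i+1)^a})$ equipped with isomorphisms $\cL_i^{loc} \stackrel{\sim}{\to} \cK_i$ for $i=1,2$.

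Applying Theorem \ref{thm:dualite} to $\cL_1$ and $\cL_2$ produces $\cL_1 \underset{X_2}{\circ} \cL_2 \in \Der^b_{\coh,Z}(\cA_{13^a})$ together with the coherent duality isomorphism
\begin{equation*}
\Du_{\cA_{12^a}}^\prime(\cL_1) \underset{X_{2^a}}{\circ} \omega_{X_{2^a}} \underset{X_{2^a}}{\circ} \Du_{\cA_{23^a}}^\prime(\cL_2) \stackrel{\sim}{\to} \Du_{\cA_{13^a}}^\prime(\cL_1 \underset{X_2}{\circ} \cL_2).
\end{equation*}
The rest of the proof consists in applying the exact functor $(\cdot)^{loc}$ to this isomorphism and verifying the relevant commutations. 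Since $\cA^{loc} \simeq \C^{\hbar,loc} \te_{\C^\hbar} \cA$ is a flat scalar extension of $\cA$, the derived tensor product commutes with $(\cdot)^{loc}$, and proper direct image commutes with the flat tensor product with the constant $\C^\hbar$-module $\C^{\hbar,loc}$; hence the convolution $\underset{X_2}{\circ}$ commutes with $(\cdot)^{loc}$. Furthermore, for a coherent $\cL$, the standard compatibility of $\fRHom$ with a flat scalar extension gives $\Du_{\cA^{loc}}^\prime(\cL^{loc}) \simeq \Du_{\cA}^\prime(\cL)^{loc}$, and $\omega^{loc}_{X_{2^a}} = (\omega_{X_{2^a}})^{loc}$ by definition.

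Putting these ingredients together, the localized isomorphism is precisely the desired one for $\cK_1$ and $\cK_2$, and it simultaneously identifies $\cK_1 \underset{X_2}{\circ} \cK_2$ with $(\cL_1 \underset{X_2}{\circ} \cL_2)^{loc}$, which lies in $\Der^b_{\gd,Z}(\cA^{loc}_{13^a})$ by construction. The main obstacle is the compatibility of the duality functor $\Du^\prime$ with $(\cdot)^{loc}$: this is where the coherence of the $\cL_i$ is crucial, and it is the reason why the preliminary reduction to lattices in the first step is indispensable — without coherence, $\fRHom$ cannot be freely interchanged with $\C^{\hbar,loc} \te_{\C^\hbar} -$.
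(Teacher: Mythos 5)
Your proposal is correct, and it supplies exactly the argument that the paper leaves implicit: the paper offers no proof of Theorem \ref{thm:dualiteloc} beyond the assertion that Theorem \ref{thm:dualite} ``extends'' upon replacing $\Der^b_{\coh}$ by $\Der^b_{\gd}$, and the reduction you describe --- locally replacing each $\cK_i$ by a coherent model $\cL_i$ over $\cA_{i(i+1)^a}$ and then localizing the coherent duality isomorphism, using flatness of $\C^{\hbar,loc}$ over $\C^\hbar$ to commute $(\cdot)^{loc}$ with $\uLte$, with proper direct image, and (thanks to coherence) with $\fRHom(-,\cA)$ --- is the standard way to make that assertion precise. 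Two points deserve a word more than you give them: the ``standard truncation argument'' producing a single object $\cL_i\in\Der^b_{\coh,Z_i}$ with $\cL_i^{loc}\simeq\cK_i$ requires clearing an $\hbar$-denominator when gluing lattices of the cohomology sheaves across a connecting morphism (possible because $X_1,X_3$ have been shrunk to relatively compact opens), and one must arrange $\Supp\cL_i\subset Z_i$ (automatic if the lattices are taken as subobjects) so that the properness hypothesis of Theorem \ref{thm:dualite} applies; finally, one should note that the natural morphism is defined globally in the localized setting by the same construction as in the coherent case, so that checking it is an isomorphism is indeed a local matter and independent of the choice of $\cL_i$.
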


\subsection{Holonomic DQ-modules} 
In this subsection we will be concerned with DQ-modules on complex symplectic manifolds. Thus, we first recall some notions of symplectic geometry. 

\begin{defi} Let $(X,\omega)$ be a complex symplectic manifold.
\begin{enumerate}[(i)]
\item A locally closed complex analytic subvariety $\Lambda$ of X is isotropic if there exists a dense open complex manifold $\Lambda^\prime \subset \Lambda$ such that $\w|_{\Lambda^\prime}=0$.
\item A subvariety $\Lambda$ in $X$ is co-isotropic if for every function $f$ and $g$ such that $f|_\Lambda=g|_\Lambda=0$ then $\lbrace f, g \rbrace=0$ on $\Lambda$ where $\lbrace \cdot, \cdot \rbrace$ is the Poisson bracket associated to the symplectic form $\omega$.
\item A locally closed complex analytic subvariety $\Lambda$ of X is Lagrangian if it is isotropic and co-isotropic.
\end{enumerate}
\end{defi}

Following \cite{KS3}, we review the notion of holonomic DQ-modules. In this section, $X$ is a complex manifold of dimension $d_X$ endowed with a DQ-algebroid $\cA_X$ such that the associated Poisson structure is symplectic and we set $n=\frac{d_X}{2}$.

\begin{defi}
\begin{enumerate}[(i)]
\item An $\cA_X^{loc}$-module $\cM$ is holonomic if it is coherent and if its support is a Lagrangian subvariety of $X$.

\item An $\cA_X$-module $\cN$ is holonomic if it is coherent, without $\hbar$-torsion and $\cN^{loc}$ is a holonomic $\cA_X^{loc}$-module.
\end{enumerate}
\end{defi}

Recall the following lemma.

\begin{lemme}[{\cite[Lemma 2.1]{KR}}] \label{lem:annulation}
Let $r$ be an integer and $\cM$ be a coherent $\cA_X^{loc}$-module so that $\fExt^j_{\cA_X^{loc}}(\cM,\cA_{X}^{loc})=0$ for any $j >r$. Then $\Hn^j_Y(\cM)=0$ for any closed analytic subset $Y$ of $X$ and any $j < \codim Y -r$.
\end{lemme}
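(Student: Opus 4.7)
The plan is to deduce the lemma from the base vanishing
\begin{equation*}
\Hn^j_Y(\cA_X^{loc})=0 \qquad \text{for all } j<\codim Y,
\end{equation*}
by induction on $r$ using local free presentations of $\cM$; the base vanishing itself will then be reduced to the classical depth estimate for the structure sheaf of a complex manifold.

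For the inductive step, since both the hypothesis and the conclusion are local on $X$, I may work in a neighbourhood where $\cM$ admits a short exact sequence
\begin{equation*}
0\longrightarrow \cM'\longrightarrow (\cA_X^{loc})^{\oplus k}\longrightarrow \cM\longrightarrow 0
\end{equation*}
with $\cM'$ coherent. The long exact sequence of $\fExt_{\cA_X^{loc}}(\cdot,\cA_X^{loc})$ yields isomorphisms $\fExt^{j}_{\cA_X^{loc}}(\cM',\cA_X^{loc})\simeq \fExt^{j+1}_{\cA_X^{loc}}(\cM,\cA_X^{loc})$ for $j\geq 1$, so $\cM'$ satisfies the hypothesis with $r-1$ in place of $r$. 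The inductive vanishing $\Hn^{j+1}_Y(\cM')=0$ for $j+1<\codim Y-(r-1)$, combined with the base vanishing applied to $(\cA_X^{loc})^{\oplus k}$, together with the long exact sequence of local cohomology
\begin{equation*}
\cdots\to\Hn^j_Y\bigl((\cA_X^{loc})^{\oplus k}\bigr)\to\Hn^j_Y(\cM)\to\Hn^{j+1}_Y(\cM')\to\cdots,
\end{equation*}
forces $\Hn^j_Y(\cM)=0$ for $j<\codim Y-r$. The base case $r=0$, where $\cM$ is locally projective hence locally a direct summand of a free module, follows directly from the base vanishing; the induction terminates since $\cA_X^{loc}$ has finite global dimension.

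To establish the base vanishing I would argue locally and assume $\cA_X$ is a star-algebra, so that as a sheaf of $\C$-vector spaces $\cA_X\simeq\cO_X^\hbar$ (only the additive structure enters $\Hn^j_Y$). The classical Scheja--Siu depth estimate gives $\Hn^j_Y(\cO_X)=0$ for $j<\codim Y$. Each quotient $\cO_X^\hbar/\hbar^n\cO_X^\hbar$ admits a finite filtration whose graded pieces are isomorphic to $\cO_X$, so iterated long exact sequences give $\Hn^j_Y(\cO_X^\hbar/\hbar^n\cO_X^\hbar)=0$ for $j<\codim Y$. The short exact sequences $0\to\cO_X^\hbar\xrightarrow{\hbar^n}\cO_X^\hbar\to\cO_X^\hbar/\hbar^n\cO_X^\hbar\to 0$ combined with the Mittag-Leffler property of the resulting projective system pass the vanishing to $\cO_X^\hbar$, and since $\cA_X^{loc}=\varinjlim(\cA_X\xrightarrow{\hbar}\cA_X\xrightarrow{\hbar}\cdots)$ and local cohomology commutes with filtered colimits, the base vanishing for $\cA_X^{loc}$ follows. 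The main technical obstacle will be the rigorous justification of the passage to the $\hbar$-adic inverse limit; this is handled via the Mittag-Leffler condition and the vanishing of $R^1\varprojlim$ on the relevant coherent system, exactly the type of argument underlying Theorem \ref{thm:codimcom}.
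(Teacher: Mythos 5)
The paper does not prove this lemma: it is quoted verbatim from Kashiwara--Rouquier \cite[Lemma 2.1]{KR}, so there is no internal proof to compare against. Your argument is a correct reconstruction and follows the standard route (dévissage to free modules via the $\fExt$-vanishing hypothesis, plus the depth estimate $\Hn^j_Y(\cA_X^{loc})=0$ for $j<\codim Y$), which is also the substance of the cited proof; the latter is usually phrased through biduality, representing $\cM$ locally by a complex of free modules in degrees $\geq -r$, rather than through your syzygy induction, but the two are equivalent. Two points deserve to be made explicit. First, in the base case $r=0$, the implication ``$\fExt^{j}_{\cA_X^{loc}}(\cM,\cA_X^{loc})=0$ for $j>0$ implies $\cM$ locally projective'' is not formal: the stalk $\cA_{X,x}^{loc}$ is Noetherian of finite global dimension but is \emph{not} local, so you should localize at primes and use that over a Noetherian local ring a finitely generated module of finite projective dimension $d$ has $\Ext^{d}(M,A)\neq 0$; the resulting idempotent then spreads to a neighbourhood by coherence. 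Second, for the passage from $\cO_X^\hbar/\hbar^n\cO_X^\hbar$ to $\cO_X^\hbar$, the cleanest justification available in the framework of this paper is cohomological completeness in the sense of \cite[\S 1.5]{KS3}: $\Rg_Y(\cO_X^\hbar)$ is cohomologically complete and $\gr_\hbar \Rg_Y(\cO_X^\hbar)\simeq \Rg_Y(\cO_X)$ is concentrated in degrees $\geq \codim Y$, whence the same holds for $\Rg_Y(\cO_X^\hbar)$; this replaces the ad hoc Mittag-Leffler discussion (note that surjectivity of the transition maps of sheaves does not by itself kill $R^1\varprojlim$ --- one needs a Stein basis or the completeness machinery). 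With these two clarifications the proof is complete.
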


Holonomic DQ-modules enjoy the following properties.

\begin{lemme}[{\cite[Lemma 7.2.2]{KS3}}] \label{lem:holref}
Let $\cM$ be a holonomic $\cA_X^{loc}$-module. Then $\Du^\prime_{\cA_X^{loc}}(\cM)[d_X/2]$ is concentrated in degree zero and is holonomic.
\end{lemme}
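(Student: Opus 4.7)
\medskip

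\noindent\textbf{Proof plan.} Set $n = d_X/2$ and $\Lambda = \Supp \cM$, a Lagrangian analytic subvariety of $X$ of pure codimension $n$. Both assertions are local, and writing $\fExt^j := \fExt^j_{\cA_X^{loc}}(\cM, \cA_X^{loc})$, I must show that $\fExt^j = 0$ for $j \neq n$ and that $\Supp \fExt^n \subset \Lambda$; the coherence of $\fExt^n$ is automatic from the coherence of $\cM$ and the noetherian nature of $\cA_X^{loc}$. Together this gives that $\Du^\prime_{\cA_X^{loc}}(\cM)[n] \simeq \fExt^n$ is concentrated in degree zero, coherent, and has Lagrangian support, hence is holonomic.

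The first step is to localize the support. Since $\cM$ is holonomic, it admits, locally on $X$, a coherent $\cA_X$-lattice $\cM_0$ with $\Supp \cM_0 = \Lambda$. From the compatibility $\fExt^j \simeq \bigl(\fExt^j_{\cA_X}(\cM_0, \cA_X)\bigr)^{loc}$ one immediately gets $\Supp \fExt^j \subset \Lambda$ for every $j$.

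The second, crucial step is the codimension estimate. I would invoke the involutivity theorem for coherent DQ-modules on symplectic manifolds (the DQ-analogue of Gabber's theorem, developed in \cite{KS3}): for each $j$, the support of $\fExt^j$ is coisotropic and of codimension at least $j$ in $X$. Combined with the inclusion $\Supp \fExt^j \subset \Lambda$ of codimension $n$, this forces $\fExt^j = 0$ for $j > n$, and ensures that when $j = n$ the support $\Supp \fExt^n$ is coisotropic inside a Lagrangian, hence itself Lagrangian.

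The final step, and the main obstacle, is the vanishing for $j < n$. Here I would use the biduality morphism $\cM \to \Du^\prime_{\cA_X^{loc}}\Du^\prime_{\cA_X^{loc}}(\cM)$, whose status as an isomorphism in the holonomic setting is a consequence of Theorem \ref{thm:dualiteloc} applied to the kernels computing $\Du^\prime$ on $X \times X^a$. Combined with the convergent spectral sequence
\begin{equation*}
E_2^{p,q} = \fExt^p_{\cA_X^{loc}}\bigl(\fExt^{-q}_{\cA_X^{loc}}(\cM,\cA_X^{loc}), \cA_X^{loc}\bigr) \;\Longrightarrow\; \mathcal{H}^{p+q}\Du^\prime_{\cA_X^{loc}}\Du^\prime_{\cA_X^{loc}}(\cM),
\end{equation*}
and the codimension estimate of the previous step applied simultaneously in $p$ and $q$, a standard degeneration argument shows that the only term with $p + q = 0$ not forced to vanish is $E_2^{n,-n}$. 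This collapse, together with the isomorphism $\cM \stackrel{\sim}{\to} \Du^\prime \Du^\prime \cM$ in degree zero, yields $\fExt^j = 0$ for $j < n$ as well. The delicate points will be the precise identification of the biduality morphism at the level of kernels (since, as noted in the introduction, duality for DQ-modules is stated via kernels rather than morphisms) and verifying that the sharp codimension inequality provided by involutivity suffices to force the spectral sequence to concentrate in bidegree $(n,-n)$.
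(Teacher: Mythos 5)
The paper itself does not prove this lemma; it is quoted verbatim from \cite[Lemma 7.2.2]{KS3}. Judging your argument on its own merits: the overall architecture (coherence, support in $\Lambda$, involutivity for $j>n$, a separate argument for $j<n$) is the right one, and your step for $j>n$ is correct — if $\fExt^j\neq 0$ its support is coisotropic by the Gabber-type theorem of \cite{KS3}, hence of codimension $\leq n$, contradicting the purity bound $\codim\Supp\fExt^j\geq j$. (Note the inclusion $\Supp\fExt^j\subset\Lambda$ plays no role there.)

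The genuine gap is in your final step, the vanishing of $\fExt^j:=\fExt^j_{\cA_X^{loc}}(\cM,\cA_X^{loc})$ for $j<n$. The biduality spectral sequence cannot deliver this with only the inputs you list. To see that the diagonal terms $E_2^{p,-p}$ with $p<n$ vanish, you need to know that $\fExt^p(\cN,\cA_X^{loc})=0$ for $p<\codim\Supp\cN$ applied to $\cN=\fExt^{-q}(\cM)$ (whose support has codimension $n$); but that grade--codimension inequality, applied to $\cN=\cM$ itself, is exactly the statement you are trying to prove. With only purity ($\codim\Supp E_2^{p,q}\geq p$) and coisotropicity, the term $E_2^{0,0}=\fHom(\fHom(\cM,\cA_X^{loc}),\cA_X^{loc})$ sits in total degree zero and is not forced to vanish, so the spectral sequence does not collapse onto $(n,-n)$ as claimed. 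The lower vanishing has to be imported from commutative algebra rather than extracted from duality formalism: take a local $\cA_X$-lattice $\cN$ of $\cM$, use $\gr_\hbar\fRHom_{\cA_X}(\cN,\cA_X)\simeq\fRHom_{\cO_X}(\gr_\hbar\cN,\cO_X)$ together with the classical fact that $\fExt^j_{\cO_X}(\cF,\cO_X)=0$ for $j<\codim\Supp\cF$ (regularity of $\cO_{X,x}$), and then lift back via cohomological completeness; this is precisely the mechanism of Lemma \ref{lem:exttor} of the present paper, localized. Once that step is supplied, the rest of your outline (including the observation that a coisotropic subset of a Lagrangian is Lagrangian, so $\fExt^n$ is holonomic) goes through.
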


\begin{lemme}\label{lem:vanishing}
Let $\cM$ be a holonomic $\cA_X^{loc}$-module supported in a Lagrangian subvariety $\Lambda$ of $X$ and $Y$ a closed analytic subset of $\Lambda$. Then $\Hn^j_Y(\cM)=0$ for $0 \leq j < \codim_\Lambda Y$. 
\end{lemme}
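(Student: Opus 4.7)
The plan is to combine the two preceding lemmas, Lemma \ref{lem:annulation} and Lemma \ref{lem:holref}, with the fact that $\Lambda$ being Lagrangian in $X$ gives $\dim_\C \Lambda = n = d_X/2$, so that codimensions in $X$ and in $\Lambda$ are related by a shift of $n$.

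The first step is to translate the reflexivity statement of Lemma \ref{lem:holref} into a vanishing statement for $\fExt$. Since $\cM$ is a holonomic $\cA_X^{loc}$-module, Lemma \ref{lem:holref} asserts that $\Du'_{\cA_X^{loc}}(\cM)[d_X/2] = \fRHom_{\cA_X^{loc}}(\cM,\cA_X^{loc})[n]$ is concentrated in degree zero. Unwinding the shift, this means
\begin{equation*}
\fExt^j_{\cA_X^{loc}}(\cM,\cA_X^{loc}) = 0 \quad \text{for every } j \neq n,
\end{equation*}
and in particular $\fExt^j_{\cA_X^{loc}}(\cM,\cA_X^{loc}) = 0$ for every $j > n$.

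The second step is to invoke Lemma \ref{lem:annulation} with $r = n$. Since $\cM$ is coherent as an $\cA_X^{loc}$-module and the Ext vanishing above holds, the lemma gives, for $Y$ a closed analytic subset of $X$,
\begin{equation*}
\Hn^j_Y(\cM) = 0 \quad \text{for every } j < \codim_X Y - n.
\end{equation*}

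Finally, one uses that $\Lambda$ is Lagrangian, so $\dim_\C \Lambda = n$, hence for $Y \subset \Lambda$ closed analytic we have $\codim_X Y = d_X - \dim Y = 2n - (\,n - \codim_\Lambda Y\,) = n + \codim_\Lambda Y$. Substituting this into the previous inequality yields $\Hn^j_Y(\cM) = 0$ for $j < \codim_\Lambda Y$, which is the desired conclusion. No step is really an obstacle here; the only mild subtlety is keeping track of the degree shift in Lemma \ref{lem:holref} and of the identity $\codim_X Y = n + \codim_\Lambda Y$, both of which are straightforward once one fixes conventions.
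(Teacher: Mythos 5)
Your proof is correct and follows exactly the paper's argument: Lemma \ref{lem:holref} gives the vanishing of $\fExt^j_{\cA_X^{loc}}(\cM,\cA_X^{loc})$ for $j>n$, and Lemma \ref{lem:annulation} with $r=n$ together with the identity $\codim_X Y = n + \codim_\Lambda Y$ yields the conclusion. You have merely made explicit the codimension bookkeeping that the paper leaves implicit.
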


\begin{proof}
Recall that $n=\frac{d_X}{2}$.
It follows from Lemma \ref{lem:holref} that
\begin{equation*}
\fExt^j_{\cA_X^{loc}}(\cM,\cA_{X}^{loc})=0 \; \textnormal{for any} \; j > n. 
\end{equation*}
Then by Lemma \ref{lem:annulation}, $\Hn^j_Y(\cM)=0$ for $0 \leq j < \codim_\Lambda Y$.
\end{proof}

\begin{lemme} \label{lem:exttor}
Let $n=\dfrac{d_X}{2}$ and $\cN$ be a holonomic $\cA_X$-module. Then $\fExt^j_{\cA_X}(\cN,\cA_X)=0$ for $j < n$ and $\fExt^n_{\cA_X}(\cN,\cA_X)$ has no $\hbar$-torsion.
\end{lemme}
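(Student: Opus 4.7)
The plan is to compute $\fRHom_{\cA_X}(\cN,\cA_X)$ in two complementary ways and combine them using the distinguished triangle
\[
\cA_X \xrightarrow{\hbar} \cA_X \to \cO_X \xrightarrow{+1}.
\]
Inverting $\hbar$ will show that the relevant cohomology lives in degree $n$ up to $\hbar$-torsion, while reducing modulo $\hbar$ will turn the vanishing statement into a classical codimension estimate for a coherent $\cO_X$-module.

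First, flatness of $\cA_X^{loc}$ over $\cA_X$ together with coherence of $\cN$ yield
\[
\fExt^j_{\cA_X}(\cN,\cA_X)^{loc} \simeq \fExt^j_{\cA_X^{loc}}(\cN^{loc},\cA_X^{loc}),
\]
and the right-hand side vanishes for $j\neq n$ by Lemma \ref{lem:holref} applied to the holonomic $\cA_X^{loc}$-module $\cN^{loc}$. Hence $\fExt^j_{\cA_X}(\cN,\cA_X)$ is of $\hbar$-torsion for every $j\neq n$. Next, since $\cN$ has no $\hbar$-torsion, $\supp(\cN)$ coincides with $\supp(\cN^{loc})$, which is Lagrangian of dimension $n$, and $\gr_\hbar \cN \simeq \cN/\hbar\cN$ is a coherent $\cO_X$-module with the same support (by Nakayama). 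The classical codimension bound for coherent analytic sheaves then yields $\fExt^j_{\cO_X}(\gr_\hbar \cN,\cO_X)=0$ for $j<n$, and the extension-of-scalars adjunction, combined with the identity $\cN\Lte_{\cA_X}\cO_X\simeq\gr_\hbar \cN$ (valid because $\cN$ has no $\hbar$-torsion), identifies $\fRHom_{\cA_X}(\cN,\cO_X)$ with $\fRHom_{\cO_X}(\gr_\hbar \cN,\cO_X)$.

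To conclude, I would apply $\fRHom_{\cA_X}(\cN,-)$ to the triangle above and read off the resulting long exact sequence, in which multiplication by $\hbar$ acts on $\fExt^j_{\cA_X}(\cN,\cA_X)$ and the connecting terms are $\fExt^\bullet_{\cO_X}(\gr_\hbar \cN,\cO_X)$. For $j<n$ both connecting terms vanish, so $\hbar$ acts as an isomorphism on $\fExt^j_{\cA_X}(\cN,\cA_X)$; combined with the $\hbar$-torsion property from step one, this forces the vanishing $\fExt^j_{\cA_X}(\cN,\cA_X)=0$. For $j=n$ only the left connecting term $\fExt^{n-1}_{\cO_X}(\gr_\hbar \cN,\cO_X)$ vanishes, so $\hbar$ acts injectively on $\fExt^n_{\cA_X}(\cN,\cA_X)$, which is exactly the absence of $\hbar$-torsion. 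There is no single hard step; the main care required is the orchestration of the two computations through the long exact sequence, and in particular the verification that the extension-of-scalars adjunction for sheaves of noncommutative rings does produce the clean identification $\fRHom_{\cA_X}(\cN,\cO_X)\simeq\fRHom_{\cO_X}(\gr_\hbar \cN,\cO_X)$ needed to make the connecting terms manageable.
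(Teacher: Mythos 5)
Your proof is correct. The core of your argument coincides with the paper's: reduce modulo $\hbar$, observe that $\Supp(\gr_\hbar\cN)$ is Lagrangian hence of codimension $n$, and invoke the classical vanishing $\fExt^j_{\cO_X}(\gr_\hbar\cN,\cO_X)=0$ for $j<\codim\Supp(\gr_\hbar\cN)$ together with the identification of $\gr_\hbar\fRHom_{\cA_X}(\cN,\cA_X)$ with $\fRHom_{\cO_X}(\gr_\hbar\cN,\cO_X)$. Where you diverge is in how you lift this back to $\cA_X$: the paper closes the argument in one stroke by citing Proposition 1.4.5 of \cite{KS3}, which is exactly the formal statement (for cohomologically complete objects such as $\fRHom_{\cA_X}(\cN,\cA_X)$) that vanishing of $\Hn^j(\gr_\hbar(-))$ for $j<n$ forces vanishing of $\Hn^j$ for $j<n$ and $\hbar$-torsion-freeness of $\Hn^n$. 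You instead unwind the long exact sequence of the triangle $\cA_X\stackrel{\hbar}{\to}\cA_X\to\cO_X\stackrel{+1}{\to}$ by hand, which only yields that $\hbar$ acts bijectively on $\fExt^j_{\cA_X}(\cN,\cA_X)$ for $j<n$ and injectively for $j=n$; to convert bijectivity into vanishing you import an extra ingredient the paper does not need here, namely flat base change to $\cA_X^{loc}$ and Lemma \ref{lem:holref} to see that $\fExt^j_{\cA_X}(\cN,\cA_X)$ is of $\hbar$-torsion for $j\neq n$. Both routes are sound: yours is self-contained modulo Lemma \ref{lem:holref} and avoids any appeal to cohomological completeness, while the paper's is shorter and does not use holonomic duality over $\cA_X^{loc}$ at this stage. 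The two nontrivial identifications you flag --- $\fExt^j_{\cA_X}(\cN,\cA_X)^{loc}\simeq\fExt^j_{\cA_X^{loc}}(\cN^{loc},\cA_X^{loc})$ (flatness of $\cA_X^{loc}$ over $\cA_X$ plus coherence of $\cN$) and $\fRHom_{\cA_X}(\cN,\cO_X)\simeq\fRHom_{\cO_X}(\cO_X\Lte_{\cA_X}\cN,\cO_X)$ with $\cO_X\Lte_{\cA_X}\cN\simeq\gr_\hbar\cN$ concentrated in degree $0$ because $\cN$ has no $\hbar$-torsion --- both hold; the latter is unproblematic in the noncommutative setting since $\hbar$ is central, so $\cO_X=\cA_X/\hbar\cA_X$ is an $\cA_X$-bimodule and the change-of-rings adjunction applies.
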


\begin{proof}
Since $\cN$ has no $\hbar$-torsion, we have $\Supp(\cN^{loc})=\Supp(\cN)$. Thus, the analytic set $\Supp(\gr_\hbar \cN)=\Supp(\cN^{loc})$ is Lagrangian. It follows that 
\begin{equation*}
\codim \Supp(\gr_\hbar \cN) = n. 
\end{equation*}
This implies that 
\begin{equation*}
\Hn^{j}(\gr_\hbar \fRHom_{\cA_X}(\cN, \cA_X))\simeq\fExt_{\cO_X}^{j}(\gr_\hbar \cN, \cO_X)=0 \; \mathrm{for} \; j<n.
\end{equation*}
Thus, by Proposition 1.4.5 of \cite{KS3}, $\fExt_{\cA_X}^j(\cN,\cA_X)=0$ for $j<n$ and $\fExt_{\cA_X}^n(\cN,\cA_X)$ has no $\hbar$-torsion.
\end{proof}

\begin{cor}\label{cor:stillhol}
Let $\cN$ be an holonomic $\cA_X$-module. Then the module $\fExt_{\cA_X}(\cN,\cA_X)$ is holonomic.
\end{cor}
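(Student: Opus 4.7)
The plan is to verify directly the three conditions defining a holonomic $\cA_X$-module (coherence, absence of $\hbar$-torsion, and holonomicity after localization) for the sheaf $\fExt^n_{\cA_X}(\cN,\cA_X)$, which by Lemma \ref{lem:exttor} is the only $\fExt^j_{\cA_X}(\cN,\cA_X)$ that can contribute once we pass to the localization.

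First I would note that $\fExt^n_{\cA_X}(\cN,\cA_X)$ is coherent: this is a standard consequence of the coherence of $\cA_X$ together with the coherence of $\cN$, since then each term of a local free resolution of $\cN$ by coherent $\cA_X$-modules yields coherent $\fHom$-sheaves. Next, the absence of $\hbar$-torsion is given to us for free, as it is precisely the second assertion of Lemma \ref{lem:exttor}.

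The remaining (and essentially only non-trivial) step is to check that the localization is a holonomic $\cA_X^{loc}$-module. Since the localization functor $(\cdot)^{loc}$ is exact and commutes with $\fRHom$ for coherent arguments, we have
\begin{equation*}
\fExt^n_{\cA_X}(\cN,\cA_X)^{loc}\simeq \fExt^n_{\cA_X^{loc}}(\cN^{loc},\cA_X^{loc}).
\end{equation*}
By definition of holonomicity for $\cA_X$-modules, $\cN^{loc}$ is a holonomic $\cA_X^{loc}$-module, so Lemma \ref{lem:holref} tells us that $\Du^\prime_{\cA_X^{loc}}(\cN^{loc})[n]=\fRHom_{\cA_X^{loc}}(\cN^{loc},\cA_X^{loc})[n]$ is concentrated in degree zero and is holonomic. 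Its only cohomology is then exactly $\fExt^n_{\cA_X^{loc}}(\cN^{loc},\cA_X^{loc})$, which is therefore holonomic.

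Putting the three pieces together, $\fExt^n_{\cA_X}(\cN,\cA_X)$ is coherent, has no $\hbar$-torsion, and admits a holonomic localization; by definition it is a holonomic $\cA_X$-module. No serious obstacle is expected, since all the technical content is packaged in Lemmas \ref{lem:exttor} and \ref{lem:holref}; the only minor care needed is the commutation of $(\cdot)^{loc}$ with $\fExt^n$, which is the standard flat-base-change argument for the flat ring extension $\cA_X\to\cA_X^{loc}$.
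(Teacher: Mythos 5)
Your proposal is correct and follows the same route as the paper's (very terse) proof: verify coherence, invoke Lemma \ref{lem:exttor} for the absence of $\hbar$-torsion, and check that the localization is holonomic. The paper merely asserts that last point; your justification via flatness of $\cA_X\to\cA_X^{loc}$ together with Lemma \ref{lem:holref} is exactly the intended argument (the same identification $\fExt^n_{\cA_X}(\cN,\cA_X)^{loc}\simeq\fExt^n_{\cA_X^{loc}}(\cN^{loc},\cA_X^{loc})$ is used again in the proof of Proposition \ref{prop:hollattice}).
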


\begin{proof}
It is immediate that the module $\fExt_{\cA_X}(\cN,\cA_X)$ is coherent. By Lemma \ref{lem:exttor}, it has no $\hbar$-torsion. Moreover, $\lbrack \fExt_{\cA_X}(\cN,\cA_X) \rbrack^{loc}$ is holonomic.
\end{proof}

\begin{prop}\label{prop:hollattice}
Let $\cM$ be a holonomic $\cA_X^{loc}$-module such that it has a lattice $\cN$. Then $\fExt_{\cA_X}^n(\fExt_{\cA_X}^n(\cN,\cA_X),\cA_X)$ is a lattice of $\cM$.
\end{prop}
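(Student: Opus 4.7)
Write $\cN^\vee := \fExt^n_{\cA_X}(\cN,\cA_X)$. The strategy is to establish in succession that (a) $\cN^{\vee\vee}$ is a coherent, $\hbar$-torsion free $\cA_X$-module, (b) its localisation is canonically isomorphic to $\cM$, and (c) it embeds into $\cM$, which together with (b) says it is a lattice of $\cM$.

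For (a), Lemma \ref{lem:exttor} applied to the holonomic module $\cN$ gives $\fExt^j_{\cA_X}(\cN,\cA_X)=0$ for $j<n$ and ensures that $\cN^\vee$ has no $\hbar$-torsion, while Corollary \ref{cor:stillhol} shows $\cN^\vee$ is again holonomic. Iterating this with $\cN^\vee$ in place of $\cN$, the module $\cN^{\vee\vee}$ is a holonomic $\cA_X$-module, hence coherent and $\hbar$-torsion free.

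For (b), I use that $\cN$ is coherent and that the localisation functor is exact, so that it commutes with the $\fRHom$ computing $\fExt$:
\begin{equation*}
(\cN^\vee)^{loc} \simeq \fExt^n_{\cA_X^{loc}}(\cN^{loc},\cA_X^{loc}) = \fExt^n_{\cA_X^{loc}}(\cM,\cA_X^{loc}).
\end{equation*}
By Lemma \ref{lem:holref}, this is a holonomic $\cA_X^{loc}$-module and all other $\fExt^j_{\cA_X^{loc}}(\cM,\cA_X^{loc})$ vanish. Applying the same identification again to the holonomic module $\cN^\vee$ and then invoking the biduality isomorphism $\cM \stackrel{\sim}{\to} \Du'_{\cA_X^{loc}}\Du'_{\cA_X^{loc}}(\cM)$ for holonomic $\cA_X^{loc}$-modules (which follows from two applications of Lemma \ref{lem:holref} within the standard duality framework of \cite{KS3}), I arrive at
\begin{equation*}
(\cN^{\vee\vee})^{loc} \simeq \fExt^n_{\cA_X^{loc}}\bigl(\fExt^n_{\cA_X^{loc}}(\cM,\cA_X^{loc}),\cA_X^{loc}\bigr) \simeq \cM.
\end{equation*}

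For (c), since $\cN^{\vee\vee}$ has no $\hbar$-torsion, the canonical morphism $\cN^{\vee\vee} \to (\cN^{\vee\vee})^{loc}$ is a monomorphism, and composing with the isomorphism of (b) embeds $\cN^{\vee\vee}$ into $\cM$ as a coherent $\cA_X$-submodule whose localisation recovers $\cM$, which is exactly the definition of a lattice. The main technical obstacles in this plan are the biduality isomorphism for holonomic $\cA_X^{loc}$-modules and the commutation of $\fExt$ with the localisation functor; once those two points are in hand, the conclusion is essentially formal.
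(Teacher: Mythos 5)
Your proposal is correct and follows essentially the same route as the paper's proof: both establish that $\fExt^n_{\cA_X}(\fExt^n_{\cA_X}(\cN,\cA_X),\cA_X)$ is $\hbar$-torsion free via Lemma \ref{lem:exttor} and Corollary \ref{cor:stillhol}, identify its localisation with $\cM$ by commuting localisation past $\fRHom$ and invoking biduality for the holonomic module $\cM$, and then deduce injectivity of the resulting map into $\cM$ from torsion-freeness. The only cosmetic difference is that the paper constructs the comparison morphism $\gamma$ first and shows $\Ker\gamma=0$, whereas you factor it through the monomorphism into the localisation; these are the same argument.
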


\begin{proof}

We set $\cF=\fExt_{\cA_X}^n(\cN,\cA_X)$. By functoriality, there is a morphism 
\begin{equation*}
\fRHom_{\cA_X}(\cF,\cA_X) \to \fRHom_{\cA_X^{loc}}(\cA_X^{loc} \te_{\cA_X} \cF,\cA_X^{loc})
\end{equation*} 
and
\begin{equation*}
\cA_X^{loc} \te_{\cA_X} \cF = \cA_X^{loc} \te_{\cA_X} \fExt_{\cA_X}^n(\cN,\cA_X) \simeq \fExt_{\cA_X^{loc}}^n(\cM,\cA_X^{loc}).
\end{equation*}
Thus, we have a morphism
\begin{equation*}
\fRHom_{\cA_X}(\cF,\cA_X) \to \fRHom_{\cA_X^{loc}}(\fExt_{\cA_X^{loc}}^n(\cM,\cA_X^{loc}),\cA_X^{loc}).
\end{equation*}

Since $\cM$ is a holonomic module 
\begin{equation*}
\fRHom_{\cA_X^{loc}}(\fExt_{\cA_X^{loc}}^n(\cM,\cA_X^{loc}),\cA_X^{loc}) \simeq \cM [-n].
\end{equation*}

Taking the degree $n$ cohomology, we obtain a morphism

\begin{equation*}
\gamma:\fExt^n_{\cA_X}(\fExt_{\cA_X}^n(\cN,\cA_X),\cA_X) \to \cM
\end{equation*}
such that
\begin{equation*}
\gamma^{loc}: \lbrack \fExt^n_{\cA_X}(\fExt_{\cA_X}^n(\cN,\cA_X),\cA_X) \rbrack^{loc} \to \cM^{loc} \simeq \cM
\end{equation*}
is an isomorphism.

By Lemma \ref{lem:exttor} and Corollary \ref{cor:stillhol}, $\fExt^n_{\cA_X}(\fExt_{\cA_X}^n(\cN,\cA_X),\cA_X)$ has no $\hbar$-torsion. Thus, $\Ker \gamma$ has no $\hbar$-torsion. Hence, there is a monomorphism $\Ker \gamma \hookrightarrow (\Ker \gamma)^{loc}$. Since, $\cA_X^{loc} \te_{\cA_X}$ is an exact functor, we have $(\Ker \gamma)^{loc} \simeq \Ker(\gamma^{loc})$ and $\Ker(\gamma^{loc})=(0)$. It follows that $\Ker \gamma=(0)$.
\end{proof}
\section{A coherence criterion}
The aim of this section is to establish Proposition \ref{prop:finitness} which is a coherence criterion for DQ-modules.
Recall that if $X$ is a topological space and $\cF$ is a sheaf of Abelian group on $X$, the support of $\cF$ denoted $\Supp(\cF)$ is the complementary of the union of open sets $U \subset X$ such that $\cF|_U=0$. Following \cite{KV}, we define the notion of $p$-finite sheaf.

\begin{defi} Let $X$ and $Y$ be two Hausdorff spaces and let $p \colon X \to Y$ be a continuous map. Let $\cF$ be a sheaf of Abelian groups. The sheaf $\cF$ is $p$-finite if the restriction of $p$ to the support of $\cF$ is a finite morphism. 
\end{defi}

\subsection{A coherence criterion: the case of $\cO_X$-modules}

We prove an equivalence between certain categories of $\cO_X$-modules and also a coherence criterion for $\cO_X$-modules. We will use these results to deduce their analogue for DQ-modules.
The statements of this subsection should be compared with \cite{Houzel}.

In all this subsection, we let $D$ be an open subset of some $\C^p$, we set $X= D \times \C^n$ and 
\begin{equation*}
p: D \times \C^n \to D 
\end{equation*}
the projection $D \times \C^n \ni (y,z)  \mapsto y$. 

We write  $\Mod_{\coh}(\cO_{X})$ for the category of coherent $\cO_X$-modules on $X$ and we denote by $\Mod_{\coh}^{p-\fin}(\cO_{X})$ the full sub-category of  $\Mod_{\coh}(\cO_{X})$ the objects of which are the $p$-finite modules.

We set $\cO_D[t]:=\cO_D[t_1,\ldots,t_n]$ and recall that the sheaf $\cO_D[t]$ is a Noetherian sheaf of ring (cf. \cite[Theorem A.31]{KDmod}).

The projection $p:X \to D$ clearly induces a morphism of ringed spaces 
\begin{equation*}
(p,p^\dagger):(X,\cO_X) \to (D,\cO_D[t]). 
\end{equation*}
We consider the following functors
\begin{align*}
\begin{aligned}
p_\ast:&  \Mod(\cO_X) \to  \Mod(\cO_D[t]) \\
       & \hspace{1.3cm} \cF \mapsto  p_\ast \cF
\end{aligned}
&&
\begin{aligned}
p^\sharp:& \Mod(\cO_D[t]) \to \Mod(\cO_X) \\
       & \hspace{1.3cm} \cG \mapsto \cO_X \te_{p^{-1}\cO_D[t]} p^{-1} \cG.
\end{aligned}
\end{align*}

\begin{prop}
The functors $p^\sharp$ is left adjoint to $p_\ast$.
\end{prop}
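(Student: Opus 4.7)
The plan is to exhibit the claimed adjunction as an instance of the standard inverse-image/direct-image adjunction for a morphism of ringed spaces. First I would unwind the morphism of ringed spaces $(p,p^\dagger)$: the ring map $p^\dagger \colon \cO_D[t] \to p_\ast \cO_X$ is the map of sheaves of rings defined on an open $V \subset D$ by sending a section $f \in \cO_D(V)$ to $f \circ p \in \cO_X(p^{-1}V)$ and sending each variable $t_i$ to the corresponding linear coordinate $z_i$ on $\C^n$, viewed as a section of $\cO_X$ over $p^{-1}V = V \times \C^n$. One checks directly that this is a well-defined morphism of sheaves of $\C$-algebras, so that $(p,p^\dagger)$ is indeed a morphism of ringed spaces, and that by adjunction it corresponds to a morphism of sheaves of rings $p^{-1}\cO_D[t] \to \cO_X$.

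Next I would construct the adjunction bijection. For $\cG \in \Mod(\cO_D[t])$ and $\cF \in \Mod(\cO_X)$, I chain together two standard adjunctions:
\begin{align*}
\Hom_{\cO_X}(p^\sharp \cG, \cF)
&= \Hom_{\cO_X}\!\bl\cO_X \te_{p^{-1}\cO_D[t]} p^{-1}\cG,\; \cF\br\\
&\simeq \Hom_{p^{-1}\cO_D[t]}(p^{-1}\cG,\; \cF)\\
&\simeq \Hom_{\cO_D[t]}(\cG,\; p_\ast \cF),
\end{align*}
where the first isomorphism is the tensor-hom adjunction applied to the ring map $p^{-1}\cO_D[t]\to\cO_X$, and the second comes from the sheaf-level adjunction $p^{-1} \dashv p_\ast$, restricted to morphisms compatible with the module structure. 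One must check that the second adjunction preserves the $\cO_D[t]$-linearity (respectively $p^{-1}\cO_D[t]$-linearity) of morphisms; this is standard and follows from the fact that the adjunction $p^{-1}\dashv p_\ast$ lifts to an adjunction between modules over a sheaf of rings $\cR$ on $D$ and modules over $p^{-1}\cR$ on $X$, with $\cF$ regarded as a $p^{-1}\cO_D[t]$-module via the structure map.

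Finally I would verify naturality of the bijection in both arguments, which is immediate since both intermediate isomorphisms are natural. There is no real obstacle here: the proof is a formal consequence of the two adjunctions. The only point that requires a modicum of care is the description of $p^\dagger$ and the verification that it extends the usual pullback on $\cO_D$ by sending the polynomial variables to the coordinate functions on the fiber $\C^n$; after that, the rest is routine category theory.
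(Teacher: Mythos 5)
Your argument is correct and is exactly the standard chain of adjunctions (extension/restriction of scalars composed with $p^{-1}\dashv p_\ast$) that the paper implicitly invokes when it dismisses the proof as ``Clear.'' Nothing further is needed.
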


\begin{proof}
Clear.
\end{proof}

\begin{prop}
The functor $p^\sharp$ is exact.
\end{prop}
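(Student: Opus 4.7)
The plan is to reduce exactness of $p^\sharp$ to a flatness statement and then verify it via commutative algebra on stalks. Since $p^{-1}$ is exact on sheaves of abelian groups, the functor $p^\sharp(\cG) = \cO_X \te_{p^{-1}\cO_D[t]} p^{-1}\cG$ is exact precisely when $\cO_X$ is flat as a $p^{-1}\cO_D[t]$-module, and flatness of a module of sheaves may be checked stalkwise.

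Fix a point $x_0 = (y_0, z_0) \in X = D \times \C^n$. The stalk of $p^{-1}\cO_D[t]$ at $x_0$ is the polynomial ring $R_0 := \cO_{D,y_0}[t_1, \ldots, t_n]$, and the stalk of $\cO_X$ is $S_0 := \cO_{D,y_0}\{z_1 - z_{0,1}, \ldots, z_n - z_{0,n}\}$, with $t_i \in R_0$ acting on $S_0$ via multiplication by the germ of $z_i$. After the translation $s_i := t_i - z_{0,i}$, the resulting map $\cO_{D,y_0}[s_1,\ldots,s_n] \to S_0$ sends each $s_i$ to the germ of $z_i - z_{0,i}$, which lies in the maximal ideal of $S_0$.

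To prove flatness of $R_0 \to S_0$, I would factor it as $R_0 \to (R_0)_\mathfrak{m} \to S_0$, where $\mathfrak{m} := (\mathfrak{m}_{D,y_0}, s_1, \ldots, s_n)$ is a maximal ideal of $R_0$. Any $f \in R_0 \setminus \mathfrak{m}$ has nonzero value at $x_0$, hence is a unit in $S_0$, so this factorization is well-defined. The first arrow is flat as it is a localization, leaving only the flatness of $(R_0)_\mathfrak{m} \to S_0$ to be checked. Both are Noetherian local rings with residue field $\C$, and a direct comparison of associated graded rings shows that both have $\mathfrak{m}$-adic completion canonically isomorphic to the formal power series ring $\C[[y - y_0, s_1, \ldots, s_n]]$. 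Faithful flatness of completion for Noetherian local rings, combined with the standard descent fact that if $g \circ f$ is flat and $g$ is faithfully flat then $f$ is flat, gives the desired conclusion.

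The main technical point is this last step, the comparison between the convergent power series ring and the localized polynomial ring via their common completion; the rest is formal. Once stalkwise flatness is established at every point, one obtains that $\cO_X$ is flat over $p^{-1}\cO_D[t]$, and hence $p^\sharp$ is exact.
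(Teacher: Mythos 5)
Your reduction is exactly the one the paper makes: $p^{-1}$ is exact, so exactness of $p^\sharp$ is equivalent to flatness of $\cO_X$ over $p^{-1}\cO_D[t]$, which is checked on stalks, i.e.\ flatness of $\cO_{X,x}$ over $\cO_{D,p(x)}[t]$. The difference is that the paper stops there and cites Malgrange (\emph{Ideals of differentiable functions}, Example 4.11) for this flatness, whereas you prove it from scratch. Your argument is correct: after translating so that the $t_i$ land in the maximal ideal, every polynomial outside $\mathfrak{m}=(\mathfrak{m}_{D,y_0},s_1,\dots,s_n)$ becomes a unit in $\cO_{X,x_0}$, so the map factors through the (flat) localization $(R_0)_{\mathfrak{m}}$; both $(R_0)_{\mathfrak{m}}$ and $\cO_{X,x_0}$ are regular Noetherian local rings whose maximal-adic completions are identified with $\C[[y-y_0,s]]$ by the induced map (since $\mathfrak{m}$ generates the maximal ideal of $\cO_{X,x_0}$, the two topologies agree and the map on associated graded rings is an isomorphism of polynomial rings); and flat-plus-faithfully-flat descent then gives flatness of $(R_0)_{\mathfrak{m}}\to\cO_{X,x_0}$. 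What your version buys is a self-contained proof of the commutative-algebra input that the paper outsources; what it costs is length, for a fact that is standard in the literature.
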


\begin{proof}
This follows from the fact that $\cO_X$ is flat over $p^{-1}\cO_D[t]$  (cf. \cite[Example 4.11]{Mal}). 
\end{proof}

\begin{Notation}
We denote by $\Mod_{\cO_D-\coh}(\cO_D[t])$ the full subcategory of the category $\Mod_{\coh}(\cO_D[t])$ the objects of which are also $\cO_D$-coherent modules.
\end{Notation}

\begin{prop}\label{prop:past}
If $\cF \in \Mod_{\coh}^{p-\fin}(\cO_X)$, then $p_\ast \cF \in \Mod_{\cO_D-\coh}(\cO_D[t])$.
\end{prop}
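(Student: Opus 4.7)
The plan is to reduce the statement to the classical finite direct image theorem for coherent analytic sheaves and then transfer coherence from $\cO_D$ to $\cO_D[t]$ using the Noetherianity stated above.

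First I would exploit the hypothesis that $p$ is finite on $\Supp(\cF)$. Let $Z = \Supp(\cF)$, equipped with its structure as a closed analytic subspace of $X$, and let $i \colon Z \hookrightarrow X$ denote the inclusion. Then $\cF \simeq i_\ast i^{-1}\cF$ (the adjunction counit is an isomorphism because $\cF$ is supported on $Z$) and the composition $p \circ i \colon Z \to D$ is a finite morphism of complex analytic spaces by assumption. The classical finite direct image theorem (Grauert--Remmert) asserts that the direct image of a coherent analytic sheaf under a finite morphism is coherent, so $(p \circ i)_\ast(i^{-1}\cF) \simeq p_\ast \cF$ is a coherent $\cO_D$-module. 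This is the real content of the proposition.

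Next I would upgrade $\cO_D$-coherence to $\cO_D[t]$-coherence. Since $\cO_D$ embeds as a subring of $\cO_D[t]$ (the constant polynomials), any sheaf which is locally finitely generated over $\cO_D$ is a fortiori locally finitely generated over $\cO_D[t]$. In particular $p_\ast \cF$ is locally finitely generated as an $\cO_D[t]$-module. Invoking the fact, recalled in the subsection, that $\cO_D[t]$ is a Noetherian sheaf of rings, every locally finitely generated $\cO_D[t]$-module is coherent; hence $p_\ast \cF \in \Mod_{\coh}(\cO_D[t])$. Combined with the $\cO_D$-coherence obtained in the first step, this places $p_\ast \cF$ in $\Mod_{\cO_D-\coh}(\cO_D[t])$ as required.

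The only genuinely substantive step is the application of the finite direct image theorem, which is a global/semi-local analytic statement; everything else is formal sheaf-theoretic manipulation. Note also that the $\cO_D[t]$-module structure on $p_\ast\cF$ is compatible with the construction: the action of the generator $t_k$ on a local section $s$ of $p_\ast\cF$ over an open $V \subset D$ is realized by multiplication by the coordinate $z_k$ on $p^{-1}(V) = V \times \C^n$, which is well-defined because $s$ is supported on $\Supp(\cF) \cap p^{-1}(V)$, a finite-fiber closed analytic subset over $V$.
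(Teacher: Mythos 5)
Your proof is correct and follows essentially the same route as the paper's one-line argument: the $\cO_D$-coherence of $p_\ast\cF$ is exactly the Grauert--Remmert finite coherence theorem applied to $p|_{\Supp(\cF)}$, and the upgrade to $\cO_D[t]$-coherence via Noetherianity of $\cO_D[t]$ is the step the paper leaves implicit. One cosmetic remark: your intermediate space $Z$ should carry the structure sheaf $\cO_X/\Ann_{\cO_X}(\cF)$ rather than the reduced structure, so that $i^{-1}\cF$ is genuinely a coherent module on it --- or apply the finite coherence theorem directly in the form valid for coherent sheaves whose support is finite over the base, which avoids introducing $Z$ at all.
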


\begin{proof}
The sheaf $p_\ast\cF$ is clearly an $\cO_D[t]$-module and it is $\cO_D$-coherent since $p|_{\Supp(\cF)}$ is finite.
\end{proof}

\begin{prop}\label{prop:psharp}
 If $\cG \in \Mod_{\cO_D-\coh}(\cO_D[t])$, then $p^\sharp \cG \in \Mod_{\coh}^{p-\fin}(\cO_X)$.
\end{prop}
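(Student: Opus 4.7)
The plan is to prove coherence and $p$-finiteness separately, using $\cO_D[t]$-coherence for the first and $\cO_D$-coherence for the second.

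For coherence, I would start from the fact that $\cG$, being in $\Mod_{\cO_D-\coh}(\cO_D[t])$, is in particular $\cO_D[t]$-coherent. Since $\cO_D[t]$ is a Noetherian sheaf of rings (as recalled just before Proposition \ref{prop:past}), $\cG$ admits locally a finite presentation
\begin{equation*}
\cO_D[t]^q \longrightarrow \cO_D[t]^r \longrightarrow \cG \longrightarrow 0.
\end{equation*}
A direct computation shows $p^\sharp \cO_D[t] \simeq \cO_X$, and $p^\sharp$ is exact by the proposition immediately preceding Proposition \ref{prop:past}, so applying $p^\sharp$ yields a finite $\cO_X$-presentation of $p^\sharp \cG$. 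Hence $p^\sharp \cG$ is a coherent $\cO_X$-module.

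For $p$-finiteness, the key input is that $\cG$ is $\cO_D$-coherent. Each $t_i$ acts on $\cG$ as an $\cO_D$-linear endomorphism of a coherent $\cO_D$-module, so the Cayley-Hamilton theorem for coherent analytic sheaves produces, locally on $D$, a monic polynomial $P_i(T) \in \cO_D[T]$ such that $P_i(t_i)\cdot\cG = 0$. Since the $p^{-1}\cO_D[t]$-action on $\cO_X$ sends $t_i$ to the coordinate function $z_i$, the element $P_i(z_i) \in \cO_X$ annihilates every elementary tensor of $p^\sharp \cG$, and therefore $P_i(z_i)\cdot p^\sharp\cG = 0$. Consequently,
\begin{equation*}
\Supp(p^\sharp \cG) \;\subset\; \bigl\{(y,z) \in D \times \C^n \;:\; P_i(z_i)(y) = 0 \text{ for } i=1,\ldots,n \bigr\}.
\end{equation*}

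It remains to observe that this latter analytic subset is finite over $D$: each $P_i$ is monic in $z_i$ with holomorphic coefficients on $D$, so the preimage of any compact $K \subset D$ is closed and bounded (the standard estimate on roots of a monic polynomial), hence compact; and each fiber is a finite product of finite sets of roots, hence finite. This gives properness with finite fibers, i.e.\ finiteness of the restricted projection, and so $p^\sharp \cG$ belongs to $\Mod_{\coh}^{p-\fin}(\cO_X)$. The only step that requires any real care is the Cayley-Hamilton reduction, since one must verify that the argument works locally on $D$ in the sheaf-theoretic setting; everything else is a formal consequence of exactness of $p^\sharp$ and properness of monic projections.
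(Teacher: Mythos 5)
Your proof is correct and follows essentially the same strategy as the paper: coherence via exactness of $p^\sharp$ and coherence of the sheaves of rings, then Cayley--Hamilton producing monic polynomials $\w_i(y,z_i)$ annihilating $p^\sharp\cG$, so that the support sits inside a zero locus finite over $D$. The only cosmetic difference is that the paper gets the monic annihilators stalkwise from $\Ann_{\cO_{D,y}[t]}(\cG_y)$ and proves closedness of $p|_{\Supp(p^\sharp\cG)}$ via the Continuity of the Roots theorem, whereas you invoke Cayley--Hamilton locally on $D$ and establish properness directly from the boundedness of roots of monic polynomials; these amount to the same argument.
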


\begin{proof}
Since $\cG$ is a coherent $\cO_D$-module, it is a coherent $\cO_D[t]$-module. Since $ \cO_X$ and $\cO_D[t]$ are coherent sheaves of rings, it  implies that $p^\sharp \cG$ is a coherent $\cO_X$-module.  

Let $y \in D$. Consider $\Ann_{\cO_D[t]}(\cG)$ the annihilator of $\cG$ in $\cO_D[t]$. Since $\cG$ is coherent, we have
\begin{equation*}
\Ann_{\cO_D[t]}(\cG)_y \simeq \Ann_{\cO_{D,y}[t]}(\cG_y).
\end{equation*}

The $t_i, \; 1 \leq i \leq n$ define $\cO_{D,y}$-linear endomorphisms $T_i:\cG_y \to \cG_y, m \mapsto t_i \cdot m$, $1\leq i \leq n$ and $\cG_y$ is a finitely generated module over $\cO_{D,y}$.  Thus, their exist monic polynomials $\w_{i,y} \in \cO_{D,y}[t_i]$ such that $\w_{i,y}(T_i)=0$. This implies that $\w_{i,y} \in  \Ann_{\cO_{D,y}[t]}(\cG_y)$. It is well known that 
\begin{equation*}
\Supp(p^\sharp \cG)=\lbrace x \in X | \Ann_{\cO_X}(p^\sharp\cG)_x \subset \mathfrak{m}_x \rbrace
\end{equation*}
where $\mathfrak{m}_x$ is the maximal ideal of $\cO_{X,x}$. Since $p^\sharp$ is exact and $\cG$ is coherent, 
\begin{equation*} 
\Ann_{\cO_X}(p^\sharp\cG) \simeq p^\sharp \Ann_{\cO_D[t]}(\cG).
\end{equation*}
It follows that 
\begin{equation*}
\Supp(p^\sharp \cG)=\lbrace x \in X | \cO_{X,x} \te_{\cO_{D,y}[t]} \Ann_{\cO_{D,y}[t]}(\cG_y) \subset \mathfrak{m}_x \rbrace.
\end{equation*}

This means that if an element $x_0=(y_0,z_0)$ of $X$ belongs to $\Supp(p^\sharp \cG)$ then, for every $1 \leq i \leq n, \; \w_{i,y_0}(z)=0$. The $\w_{i,y_0}(t_i)$ being monic polynomials, they have a finite number of roots. This implies that $p^{-1}(y)$ is finite.

Let us prove that $p|_ {\Supp(p^\sharp \cG)}$ is closed.  Since being closed is a property local on the base, it is sufficient to check that $p|_{\Lambda } :\Lambda  \to U_i$ is closed for a covering $(U_i)_{i \in I}$ of $D$.

Let $y_0 \in D$ and consider the polynomials $\w_{i,y_0}(t_i) \in \cO_{D,y_0}[t_i], \; 1\leq i \leq n$ previously defined. There exists an open neighbourhood $U$ of $y_0$ such that the $w_i(t_i) \in \cO_{D}[t_i]|_U$ are well defined and belong to $\Ann_{\cO_{D}[t]}(\cG)|_U$. It follows immediately that they induce Weierstrass polynomials $\w_i(y,z_i)  \in \Ann_{\cO_X}(p^\sharp \cG)|_{U \times \C^n}$. 

We set $Z= \lbrace (y,z) \in U \times \C^n | \w_1(y,z_1)=\ldots=\w_n(y,z_n)=0\rbrace$ and $\Lambda=\Supp(p^\sharp \cG)\cap U \times \C^n$. We have the following commutative diagram
\begin{equation*}
\xymatrix{\Lambda \ar[d]_-{i_\Lambda}\ar[r]^-{p|_{\Lambda}} & U\\
 Z \ar[ru]_-{p|_Z} & \\
}
\end{equation*}

The map $i_\Lambda$ is a closed immersion. Thus, we are reduced to show that $p|_Z$ is a closed map. This follow immediately by repeated applications of the Continuity of the Roots theorem (see \cite[p.52]{GR}).
\end{proof}

\begin{lemme}\label{lem:vannak}
Let $(T,\cO_T)$ be a closed analytic subspace of $(X,\cO_X)$ such that $\cO_T$ is $p$-finite. 
\begin{enumerate}[(i)]
\item If $x \in X$ and $y=p(x)$, then for every $x_i \in p^{-1}(y)$, such that $x_i \neq x$, $\cO_{X,x} \te_{\cO_{D,y}[t]} \cO_{T,x_i}=0$. 

\item $p^\sharp p_\ast \cO_T \simeq \cO_T$. 
\end{enumerate}
\end{lemme}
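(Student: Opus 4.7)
The plan is to prove the two parts separately, with part (i) providing the technical core that trivialises the cross-terms in the stalk computation of (ii).

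For (i), the strategy is to produce an element of $\cO_{D,y}[t]$ that simultaneously annihilates $\cO_{T,x_i}$ and is a unit in $\cO_{X,x}$, so that the standard ``multiply-and-cancel'' manoeuvre forces the tensor product to vanish. Since $\cO_T$ is $p$-finite, $\cO_{T,x_i}$ is a finitely generated $\cO_{D,y}$-module, and Cayley--Hamilton applied to the $\cO_{D,y}$-linear endomorphism ``multiplication by $t_j$'' yields, for each coordinate $t_j$, a monic polynomial $P_j(t_j) \in \cO_{D,y}[t_j]$ annihilating $\cO_{T,x_i}$. Since $x \neq x_i$ but $p(x) = p(x_i)$, choose an index $j$ with $t_j(x) \neq t_j(x_i)$ and apply Weierstrass preparation to $P_j$ at $(y, t_j(x_i))$: one writes $P_j = W \cdot U$ with $W \in \cO_{D,y}[t_j]$ a Weierstrass polynomial and $U$ a unit in the local ring at $(y, t_j(x_i))$. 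Then $W$ still annihilates $\cO_{T,x_i}$, while its value at $x = (y, t_j(x))$ equals $(t_j(x) - t_j(x_i))^{\deg W} \neq 0$, so $W$ is a unit in $\cO_{X,x}$. Hence in $M := \cO_{X,x} \otimes_{\cO_{D,y}[t]} \cO_{T,x_i}$, every $g$ satisfies $1 \otimes g = (W \cdot W^{-1}) \otimes g = W^{-1} \otimes W g = W^{-1} \otimes 0 = 0$, so $M = 0$.

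For (ii), I reduce to stalks. By $p$-finiteness the fibre $p^{-1}(y) \cap \Supp(\cO_T)$ is finite and $(p_\ast \cO_T)_y \simeq \bigoplus_{x_i \in p^{-1}(y) \cap T} \cO_{T,x_i}$ as $\cO_{D,y}[t]$-modules, so $(p^\sharp p_\ast \cO_T)_x = \bigoplus_i \cO_{X,x} \otimes_{\cO_{D,y}[t]} \cO_{T,x_i}$. Part (i) kills every summand with $x_i \neq x$, so the claim reduces to showing that the surviving summand equals $\cO_{T,x}$ when $x \in T$, and is $0$ otherwise. The natural map $\mu : \cO_{X,x} \otimes_{\cO_{D,y}[t]} \cO_{T,x} \to \cO_{T,x}$, $f \otimes g \mapsto fg$, is surjective; to show injectivity, one checks that the source $M$ is a finitely generated $\cO_{X,x}$-module with $M/\mathfrak{m}_x M \simeq \C$, so by Nakayama it is cyclic, generated by $1 \otimes 1$. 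It then suffices to prove that the annihilator of $1 \otimes 1$ contains the ideal $I \subset \cO_{X,x}$ cutting out $T$ at $x$, which in turn reduces to showing that $I$ is $\cO_{X,x}$-generated by $I \cap \cO_{D,y}[t]$: any $w = \sum f_i r_i$ with $r_i \in I \cap \cO_{D,y}[t]$ gives $w \otimes 1 = \sum f_i \otimes r_i \cdot 1 = 0$.

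The main obstacle is this last claim that $I$ is generated by polynomial elements, and this is where the technical work concentrates. My plan is to build on the Weierstrass polynomials $W_j(t_j) \in I \cap \cO_{D,y}[t_j]$ produced in (i): they cut out a larger $p$-finite analytic subspace $T_0 \supseteq T$ at $x$, and by Weierstrass division $\cO_{T_0, x} = \cO_{X,x}/(W_1, \ldots, W_n)$ is $\cO_{D,y}$-free with basis the monomials $\{t^\alpha : \alpha_j < \deg W_j\}$. The sub-ideal $I/(W_1, \ldots, W_n) \subset \cO_{T_0, x}$ is then a finitely generated $\cO_{D,y}$-module, each generator of which lifts (via the polynomial basis above) to a polynomial element of $I \cap \cO_{D,y}[t]$; together with the $W_j$ themselves, these lifts generate $I$ as an $\cO_{X,x}$-ideal, concluding the proof.
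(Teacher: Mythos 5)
Your proof is correct, and it diverges from the paper's in an instructive way. For (i) the underlying idea is the same --- separate $x$ from $x_i$ by an element of $\cO_{D,y}[t]$ --- but the executions are dual: the paper takes a polynomial $P$ with $P(x)=0$ and $P(x_i)\neq 0$, so that $P$ acts invertibly on $\cO_{T,x_i}$, hence $(P)M=M$ with $(P)\subset\mathfrak{m}_x$, and concludes by Nakayama (which requires first knowing that $M$ is a finitely generated $\cO_{X,x}$-module, extracted from the coherence of $p^\sharp p_\ast\cO_T$); you instead manufacture, via Cayley--Hamilton and Weierstrass preparation, an element of $\Ann(\cO_{T,x_i})\cap\cO_{D,y}[t]$ that is a unit at $x$ and cancel directly, needing neither finite generation nor Nakayama. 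The real difference is in (ii), which the paper dismisses as "a direct consequence of (i)". Part (i) only collapses the stalk of $p^\sharp p_\ast\cO_T$ at $x$ to the single summand $\cO_{X,x}\otimes_{\cO_{D,y}[t]}\cO_{T,x}$; one must still show that the multiplication map from this summand onto $\cO_{T,x}$ is injective, and this is not formal (it fails without the finiteness hypothesis). You correctly isolate this as the crux and reduce it to the assertion that the ideal $I$ of $T$ at $x$ is generated over $\cO_{X,x}$ by $I\cap\cO_{D,y}[t]$, proved by exhibiting Weierstrass polynomials $W_j\in I$ and using the $\cO_{D,y}$-freeness of $\cO_{X,x}/(W_1,\dots,W_n)$ on the monomial basis to lift generators of $I/(W_1,\dots,W_n)$ to polynomials. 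This is sound; the only imprecision is that the $W_j$ must be produced afresh at the point $x$ itself (Cayley--Hamilton on $\cO_{T,x}$, preparation at $(y,t_j(x))$) rather than literally reused from (i), but the construction is identical. In short, your argument makes explicit a step the paper's own proof of (ii) leaves implicit.
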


\begin{proof}
\noindent (i) It follows from Propositions \ref{prop:past} and \ref{prop:psharp} that $p^\sharp p_\ast \cO_T$ is a $p$-finite coherent $\cO_X$-module. Thus,
\begin{equation*}
(p^\sharp p_\ast \cO_T)_x \simeq \bigoplus_{x_i \in  p^{-1}(y)} \cO_{X,x} \te_{\cO_{D,y}[t]} \cO_{T,x_i}
\end{equation*}
is finitely generated $\cO_{X,x}$-module. Moreover, $\cO_{X,x}$ is a Noetherian ring. This implies that the module $\cO_{X,x} \te_{\cO_{D,y}[t]} \cO_{T,x_i}$ is also a finitely generated $\cO_{X,x}$-module. Since $x_i \neq x$, there exists a polynomial $P \in \cO_{D,y}[t]$ such that $P(x)=0$ and $P(x_i)\neq 0$. Denoting by $(P)$ the ideal generated by $P$ in $\cO_{X,x}$, we have that $(P)(\cO_{X,x} \te_{\cO_{D,y}[t]} \cO_{T,x_i})=\cO_{X,x} \te_{\cO_{D,y}[t]} \cO_{T,x_i}$ and $(P)$ is included in the maximal ideal of $\cO_{X,x}$. Hence, Nakayama's lemma implies that $\cO_{X,x}\te_{\cO_{D,y}[t]} \cO_{T,x_i}=0$.

\noindent (ii) It is a direct consequence of (i).
\end{proof}
Theorem \ref{thm:equicom} below is well-known to experts and should be compared with \cite[Section 3]{Houzel}. We give a proof for the convenience of the reader. 
\begin{thm}\label{thm:equicom}
The functors
\begin{align*}
\xymatrix{
 \Mod_{\coh}^{p-\fin}(\cO_X) \ar@<.5ex>[r]^-{p_\ast}&  \Mod_{\cO_D-\coh}(\cO_D[t])  \ar@<.5ex>[l]^-{p^\sharp}\\
}
\end{align*}
are equivalences of categories inverse to each other.  
\end{thm}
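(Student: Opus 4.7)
The plan is to show that both the counit
\begin{equation*}
\epsilon_\cF : p^\sharp p_\ast \cF \longrightarrow \cF
\end{equation*}
and the unit
\begin{equation*}
\eta_\cG : \cG \longrightarrow p_\ast p^\sharp \cG
\end{equation*}
of the adjunction $p^\sharp \dashv p_\ast$ are natural isomorphisms on the respective subcategories $\Mod_{\coh}^{p-\fin}(\cO_X)$ and $\Mod_{\cO_D-\coh}(\cO_D[t])$. Both assertions are local and can be checked on stalks.

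For the counit, fix $x_0 \in X$ and set $y_0 = p(x_0)$. By $p$-finiteness, $p^{-1}(y_0) \cap \Supp\cF = \{x_0=x_1,x_2,\ldots,x_k\}$ is finite, and shrinking $D$ near $y_0$ yields a splitting $(p_\ast \cF)_{y_0} \simeq \bigoplus_i \cF_{x_i}$, hence
\begin{equation*}
(p^\sharp p_\ast \cF)_{x_0} \simeq \bigoplus_{i=1}^k \cO_{X,x_0} \otimes_{\cO_{D,y_0}[t]} \cF_{x_i}.
\end{equation*}
The summands with $i\neq 1$ vanish by the same Nakayama argument used in Lemma~\ref{lem:vannak}(i): a polynomial $P \in \cO_{D,y_0}[t]$ vanishing at $x_0$ but not at $x_i$ acts simultaneously as an element of $\mathfrak m_{x_0}$ and as a unit on the finitely generated $\cO_{X,x_0}$-module $\cO_{X,x_0} \otimes_{\cO_{D,y_0}[t]} \cF_{x_i}$. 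The remaining map $\cO_{X,x_0} \otimes_{\cO_{D,y_0}[t]} \cF_{x_0} \to \cF_{x_0}$ is obviously surjective; for injectivity I would reduce to the case $\cF=\cO_T$, with $T=\Supp\cF$ equipped with the subspace structure defined by $\Ann_{\cO_X}(\cF)$, by means of a local coherent presentation $\cO_T^\ell \to \cO_T^k \to \cF \to 0$. The functor $p^\sharp$ is exact and $p_\ast$ is exact on $\Mod_{\coh}^{p-\fin}(\cO_X)$, so applying $p^\sharp p_\ast$ to this presentation and comparing with $\epsilon$ reduces via the five-lemma to the case $\cF=\cO_T$, which is exactly Lemma~\ref{lem:vannak}(ii).

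For the unit, I would first invoke the triangle identity $p_\ast(\epsilon_\cF)\circ \eta_{p_\ast\cF} = \id_{p_\ast\cF}$, which together with Step~1 shows that $\eta_{p_\ast\cF}$ is an isomorphism for every $\cF \in \Mod_{\coh}^{p-\fin}(\cO_X)$. It therefore suffices to present an arbitrary $\cG\in\Mod_{\cO_D-\coh}(\cO_D[t])$ locally as a cokernel of a map between objects of the form $p_\ast\cO_T$. Near $y_0$, the $\cO_D$-coherence of $\cG$ combined with the Cayley--Hamilton argument of Proposition~\ref{prop:psharp} produces monic polynomials $\omega_j(t_j) \in \cO_D[t_j]$ on a neighbourhood $V$ of $y_0$ that annihilate $\cG|_V$. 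Setting $T=V(\omega_1,\ldots,\omega_n)\subset V\times \C^n$ gives a $p$-finite closed subspace, and iterated Weierstrass division yields
\begin{equation*}
p_\ast\cO_T \simeq \cO_D[t]/(\omega_1,\ldots,\omega_n)\big|_V
\end{equation*}
as $\cO_D[t]$-modules, both being $\cO_D$-free of rank $\prod_j \deg\omega_j$. Since $\cG|_V$ is then a coherent module over this quotient, iterating the construction once produces a local two-step presentation $(p_\ast\cO_{T'})^\ell \to (p_\ast\cO_T)^k \to \cG|_V \to 0$, and the right-exactness of $p_\ast p^\sharp$ combined with the unit being an iso on the first two terms forces $\eta_\cG$ to be an iso.

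The main obstacle is the iterated Weierstrass step identifying $p_\ast\cO_T$ with $\cO_D[t]/(\omega_1,\ldots,\omega_n)$. Equivalently, one needs that the analytic local rings $\cO_{X,x_i}/(\omega_j)$ at the points of $p^{-1}(y_0)\cap T$ recombine, via the Henselian property of the holomorphic local ring $\cO_{D,y_0}$, into the finite $\cO_{D,y_0}$-algebra $\cO_{D,y_0}[t]/(\omega_j)$; the numerical check that the total rank $\sum_i \prod_j \deg w_j^{(i)}$ (over preimages, with $w_j^{(i)}$ the Weierstrass factor at $x_i$) equals $\prod_j \deg\omega_j$ ensures that the natural map is an isomorphism of $\cO_{D,y_0}$-free modules. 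Once this identification is in place, everything else is formal adjunction manipulation.
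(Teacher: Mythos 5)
Your proof is correct, and while your treatment of the counit essentially coincides with the paper's, your argument for the unit takes a genuinely different route. For the counit the paper performs the same stalk decomposition and the same Nakayama vanishing of the off-diagonal terms via Lemma \ref{lem:vannak}(i), and handles the diagonal term by rewriting $\cF_{x_i}$ as $\cO_{T,x_i}\otimes_{\cO_{T,x_i}}\cF_{x_i}$ so as to quote Lemma \ref{lem:vannak}(ii) directly; your reduction to $\cO_T$ via a local $\cO_T$-presentation and the five lemma is an interchangeable packaging of the same idea. For the unit, the paper works on stalks and builds an explicit inverse of $\cG_y\to(p_\ast p^\sharp\cG)_y$ by applying the generalized Weierstrass division theorem to a tuple of germs $(f_{x_i})_i$ over the fibre, producing the unique polynomial remainder $r$ of multidegree bounded by $(\deg\omega_i)_i$. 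You instead deduce the unit formally from the triangle identity (once the counit is known to be invertible) together with a local two-step resolution of $\cG$ by modules of the form $p_\ast\cO_T$. The step you flag as the main obstacle, namely $p_\ast\cO_T\simeq\cO_D[t]/(\omega_1,\ldots,\omega_n)$ with both sides $\cO_D$-free of rank $\prod_j\deg\omega_j$, is exactly the content of the generalized Weierstrass division theorem of \cite{GR} that the paper invokes, so the two proofs rest on the same analytic input. Your version isolates that input in a single computation of $p_\ast\cO_T$ for a complete intersection of Weierstrass polynomials and makes everything else adjunction formalism, which is arguably more robust (e.g.\ it makes clear why only right-exactness of $p_\ast p^\sharp$ on $p$-finite sheaves is needed); the paper's explicit stalkwise inverse is shorter once the division theorem is quoted. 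One small simplification available to you: the kernel of $(p_\ast\cO_T)^k\to\cG|_V$ is again annihilated by $(\omega_1,\ldots,\omega_n)$, so you may take $T'=T$ and avoid introducing a second subspace.
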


\begin{proof} 

Consider the morphism
\begin{equation}\label{mor:prem}
p^\sharp p_\ast \cF \to \cF.
\end{equation}
We prove that it is an isomorphism. We set $T=\Supp(\cF)$ and we define $\cO_T=\cO_X / \Ann_{\cO_X}(\cF)$. Let $x \in X$ and set $y=p(x)$. Since $p|_T$ is finite, we have
\begin{align*}
(p^\sharp p_\ast \cF)_x &\simeq  \cO_{X,x} \te_{\cO_{D,y}[t]} (p_\ast \cF)_x\\
                                      &\simeq \bigoplus_{x_i \in p^{-1}(y)}  \cO_{X,x} \te_{\cO_{D,y}[t]} \cF_{x_i}\\
                                      &\simeq \bigoplus_{x_i \in p^{-1}(y)}  \cO_{X,x} \te_{\cO_{D,y}[t]} \cO_{T,x_i} \te_{\cO_{T,x_i}} \cF_{x_i}.
\end{align*}
By lemma \ref{lem:vannak}, $\cO_{X,x} \te_{\cO_{D,y}[t]} \cO_{T,x_i}=0$ if $x \neq x_i$. It follows that $(p^\sharp p_\ast \cF)_x \simeq \cF_x$.  Then the morphism \eqref{mor:prem} is an isomorphism.

Consider the map
\begin{equation}\label{map:equi}
\cG \to p_\ast p^\sharp \cG  .
\end{equation}
We prove it is an isomorphism.

Let $y \in D$. Then the map \eqref{map:equi}  induces a morphism
\begin{equation}\label{mor:isostalk}
\cG_y \to (p_\ast p^\sharp \cG)_y  .
\end{equation}
We construct the inverse of the morphism \eqref{mor:isostalk}. Since $p|_{\Supp(p^\sharp \cG)}$ is finite, we have
\begin{align*}
(p_\ast p^\sharp \cG)_y \simeq \bigoplus_{x_i \in p^{-1}(y) } \cO_{X, x_i} \te_{\cO_{D,y}[t]} \cG_y\simeq (\bigoplus_{x_i \in p^{-1}(y) } \cO_{X, x_i} )\te_{\cO_{D,y}[t]} \cG_y.
\end{align*}

As already explained, the $t_i, \; 1 \leq i \leq n$ define $\cO_{D,y}$-linear endomorphisms $T_i:\cG_y \to \cG_y, v \mapsto t_i \cdot v, \; 1 \leq i \leq n$ and $\cG_y$ is a finitely generated module over $\cO_{D,y}$.  Thus, there exists monic polynomials $\w_{i,y} \in \cO_{D,y}[t_i]$ such that $\w_{i,y}(T_i)=0$. Shrinking $D$ if necessary (and  $X$ since $X=D \times \C^n$), we set $\mathcal{Q}=(\cO_X / (\w_1, \ldots, \w_n) \cO_X)$ and denote by $m$ the cardinal of $p^{-1}(y)$.
By the generalized Weierstrass division theorem (\cite[Ch. 2 §4]{GR}) for any choice of $m$ germs$ f_j \in \mathcal{Q}_{x_j} , \; x_j \in p^{-1}(y), \; i \leq j \leq m$, there exists a uniquely determined polynomial $r \in \cO_{D,y}[t]$ whose degree in $t_i$ is less than $\deg \w_i$, $1 \leq i \leq n$ and such that the residue epimorphism $ \cO_{X,x_j} \to \mathcal{Q}_{x_j}$ maps $r$ onto $f_j$ for all $j=1,\ldots,m$. We define the morphism
\begin{align*}
\phi: (\bigoplus_{x_i \in p^{-1}(y) } \cO_{X, x_i}) \te_{\cO_{D,y}[t]} \cG_y & \to \cG_y\\
(f_{x_i})_{x_i \in p^{-1}(y) }\te n &\mapsto  rn
\end{align*}
where $r$ is the polynomial obtained via the generalized Weierstrass division Theorem. It is immediate that $\phi$ is the inverse of the morphism \eqref{mor:isostalk}. This implies that morphism \eqref{map:equi} is an isomorphism.
\end{proof}

We now prove the following coherence criterion.
\begin{prop} \label{prop:cohcrit} Let $\cF$ be a $\cO_{X}$-module and assume that $\cF$ is a $p$-finite $\cO_X$-module. Then $\cF$ is a coherent $\cO_{X}$-module if and only if $p_\ast \cF$ is a coherent $\cO_{D}$-module.
\end{prop}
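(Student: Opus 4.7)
The forward direction is exactly Proposition~\ref{prop:past}. For the converse, assume $\cF$ is $p$-finite and that $p_\ast \cF$ is $\cO_D$-coherent. My first step is to upgrade the coherence hypothesis: since $p_\ast \cF$ carries a natural $\cO_D[t]$-module structure and $\cO_D[t]$ is a coherent (indeed locally Noetherian) sheaf of rings, an $\cO_D$-coherent $\cO_D[t]$-module is automatically $\cO_D[t]$-coherent. Thus $p_\ast \cF$ belongs to $\Mod_{\cO_D-\coh}(\cO_D[t])$, and Proposition~\ref{prop:psharp} then shows that $p^\sharp p_\ast \cF$ is an $\cO_X$-coherent, $p$-finite module.

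It remains to prove that the counit $\epsilon \colon p^\sharp p_\ast \cF \to \cF$ of the adjunction $p^\sharp \dashv p_\ast$ is an isomorphism. My strategy is to verify this first after applying $p_\ast$, and then transfer the conclusion back to $X$. By Theorem~\ref{thm:equicom} the unit $\eta_{p_\ast \cF} \colon p_\ast \cF \to p_\ast p^\sharp p_\ast \cF$ is an isomorphism, and the triangle identity $(p_\ast \epsilon) \circ \eta_{p_\ast \cF} = \id_{p_\ast \cF}$ of the adjunction then forces $p_\ast \epsilon$ itself to be an isomorphism.

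To descend from $p_\ast \epsilon$ to $\epsilon$, I would exploit two basic properties of the pushforward restricted to $p$-finite sheaves, both consequences of the stalk formula $(p_\ast \cE)_y = \bigoplus_{x \in p^{-1}(y)} \cE_x$ that is valid whenever $p|_{\Supp \cE}$ is finite: $p_\ast$ is exact, and $p_\ast$ is conservative (a $p$-finite $\cE$ with $p_\ast \cE = 0$ must vanish). The kernel of $\epsilon$ is a subobject of the $p$-finite module $p^\sharp p_\ast \cF$, and the cokernel is a quotient of the $p$-finite module $\cF$, so both have $p$-finite support. Exactness of $p_\ast$ on such sheaves gives that $p_\ast$ sends the kernel and cokernel of $\epsilon$ to those of $p_\ast \epsilon$, which are zero; conservativity yields $\Ker \epsilon = 0$ and a vanishing cokernel, so $\epsilon$ is an isomorphism and $\cF \simeq p^\sharp p_\ast \cF$ is $\cO_X$-coherent.

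The main obstacle I anticipate is the careful handling of the exactness and conservativity of $p_\ast$ on $p$-finite sheaves and of the fact that $\Ker \epsilon$ and the cokernel of $\epsilon$ remain inside this class; once the stalk formula is at hand, the remainder is a formal consequence of the adjunction $p^\sharp \dashv p_\ast$ and the results already established for the commutative side.
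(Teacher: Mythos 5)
Your proof is correct, and its first half coincides with the paper's: both arguments use the triangle identity of the adjunction $p^\sharp\dashv p_\ast$ together with the exactness of $p_\ast$ on $p$-finite sheaves to show that the cokernel of the counit $\varepsilon_\cF\colon p^\sharp p_\ast\cF\to\cF$ vanishes. Where you diverge is in proving that $\varepsilon_\cF$ is also injective. The paper does this by an explicit stalk computation: having shown $\varepsilon_\cF$ is an epimorphism, it deduces the inclusion $\Ann_{\cO_X}(p^\sharp p_\ast\cF)\subset\Ann_{\cO_X}(\cF)$, regards $\cF$ as a module over $\cO_T=\cO_X/\Ann_{\cO_X}(p^\sharp p_\ast\cF)$, and invokes the Nakayama-type vanishing of Lemma \ref{lem:vannak} to identify $(p^\sharp p_\ast\cF)_x$ with $\cF_x$. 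You instead observe that, once $p_\ast\cF$ is placed in $\Mod_{\cO_D-\coh}(\cO_D[t])$ (the same upgrade the paper performs inside the proof of Proposition \ref{prop:psharp}, using that $\cO_D[t]$ is Noetherian), Theorem \ref{thm:equicom} makes the unit $\eta_{p_\ast\cF}$ an isomorphism, so the triangle identity forces $p_\ast\varepsilon_\cF$ to be an isomorphism rather than merely an epimorphism; conservativity of $p_\ast$ on $p$-finite sheaves then kills the kernel as well. This is a legitimate and arguably cleaner finish: it reuses the already-established Theorem \ref{thm:equicom} and avoids the annihilator manipulation, at the small price of having to note that $\Ker\varepsilon_\cF$ and $\mathrm{Coker}\,\varepsilon_\cF$ remain $p$-finite (their supports are closed subsets of $\Supp(p^\sharp p_\ast\cF)$ and $\Supp(\cF)$, and a finite map stays finite on closed subsets), so that the stalk formula $(p_\ast\cE)_y\simeq\bigoplus_{x\in p^{-1}(y)}\cE_x$, and hence exactness and conservativity, apply to the whole four-term sequence. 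That stalk formula is exactly what the paper itself uses (implicitly, and via Proposition \ref{prop:sheafdecompo}) to justify exactness of $p_\ast$ on $p$-finite sheaves, so your level of rigor matches the source.
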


\begin{proof}
\noindent (i) Assume that $\cF$ is coherent. Since $p|_{\Supp(\cF)}$ is finite, $p_\ast \cF$ is coherent.\\

\noindent (ii) Assume that $p_\ast \cF$ is coherent. The functor $p^\sharp$ and $p_\ast$ are adjoint. We denote by $\varepsilon: p^\sharp p_\ast \to \id$ the co-unit of the adjunction and by $\eta: \id \to p_\ast p^\sharp $ the unit of the adjunction. Recall that the natural transformation
\begin{equation*}
p_\ast \stackrel{\eta p_\ast }{\to} p_\ast p^\sharp p_\ast \stackrel{p_\ast \varepsilon }{\to} p^\ast
\end{equation*}
is the identity. Thus $p_\ast \varepsilon $ is an epimorphism of sheaves.

Let $\cQ$ be the cokernel of $\varepsilon_\cF :p^\sharp p_\ast \cF \to \cF$. By Propositions \ref{prop:past} and \ref{prop:psharp}, $p^\sharp p_\ast \cF$ is a $p$-finite coherent $\cO_X$-module and $\cQ$ is also a $p$-finite sheaf since $\Supp(\cQ) \subset \Supp(\cF)$. It follows that we have a right exact sequence of $p$-finite sheaves
\begin{equation}\label{eq:precursor}
p^\sharp p_\ast \cF \stackrel{\varepsilon_\cF}{\to} \cF \to \cQ \to 0.
\end{equation}
Since all the sheaves in the exact sequence \eqref{eq:precursor} are $p$-finite, we obtain by applying the functor $p_\ast$ the right exact sequence
\begin{equation*}
p_\ast p^\sharp p_\ast \cF \stackrel{p_\ast \varepsilon_\cF}{\to} p_\ast \cF \to p_\ast \cQ \to 0.
\end{equation*}
Since $p_\ast \varepsilon_\cF$ is an epimorphism of sheaves, it follows that $p_\ast \cQ \simeq 0$ which implies that $\cQ \simeq 0$. 
Thus, $\varepsilon_\cF$ is an epimorphism. It results that $\cF$ is a locally finitely generated $\cO_X$-module and that $\Ann_{\cO_X}(p^\sharp p_\ast\cF) \subset \Ann_{\cO_X}(\cF)$. 
 Setting $\cO_T= \cO_X / \Ann_{\cO_X}(p^\sharp p_\ast\cF)$, it follows from the inclusion $\Ann_{\cO_X}(p^\sharp p_\ast\cF) \subset \Ann_{\cO_X}(\cF)$ that $\cF$ is a $\cO_T$-module. Moreover,
\begin{equation*}
\cO_{X,x} \te_{\cO_{D,y}[t]} \cF_{x_i}\simeq \cO_{X,x} \te_{\cO_{D,y}[t]} \cO_{T,x_i} \te_{\cO_{T,x_i}} \cF_{x_i},
\end{equation*} 
and by Lemma \ref{lem:vannak},
\begin{equation*}
\cO_{X,x} \te_{\cO_{D,y}[t]} \cO_{T,x_i}=0 \; \textnormal{if} \; x_i \neq x.
\end{equation*}
Then we have the following commutative diagram.
\begin{equation*}
\xymatrix{(p^\sharp p_\ast \cF)_x \ar[d]^-{\wr} \ar[r]^-{\varepsilon_{\cF,x}}& \cF_x \ar[d]^-{\wr}\\
\underset{x_i \in p^{-1}(y)}{\bigoplus}  \cO_{X,x} \te_{\cO_{D,y}[t]} \cF_{x_i} \ar[r]  \ar[d]^-{\wr} & \cF_x \ar[d]^-{\wr}\\
\cF_x \ar[r]^-{\sim} &\cF_x.}
\end{equation*}
Hence, $p^\sharp p_\ast \cF \simeq \cF$ which implies that $\cF$ is coherent. 
\end{proof}

\subsection{A coherence criterion: the case of DQ-modules}\label{subsec:cohDQ}
We now focus our attention on DQ-modules and prove an analogue for DQ-modules of Theorem \ref{thm:equicom} and Proposition \ref{prop:cohcrit}. 

Let $M$ be an open subset of $\C^n$. We set $X=T^\ast M$ and denote by $\rho:T^\ast M \to M$ the projection on the base. We denote by $\cA_{X}$ the Moyal-Weyl star-algebra on X and define the algebra 
\begin{align*}
\cB_{M} = & \cO_{M}^\hbar \te_\C \C[\hbar \partial_1,\cdots,\hbar \partial_n].
\end{align*}
Then
\begin{align*}
\cB_{M}^{loc} \simeq & \cO_{M}^{\hbar,loc} \te_\C \C[\partial_1,\cdots,\partial_n].
\end{align*}
\begin{defi}
We say that a $\cB_{M}$-submodule $\mathcal{L}$ of a coherent $\cB_{M}^{loc}$-module $\cN$ is a finiteness $\cB_{M}$-lattice of $\cN$ if $\mathcal{L}$ is a $\cB_{M}$-lattice of $\cN$ and it is coherent as a $\cO_{M}^\hbar$-module.  
\end{defi}

 We denote by $\Mod_{\fgd}(\cB_{M}^{loc})$ the Abelian full-subcategory of $\Mod(\cB_{M}^{loc})$  the objects of which are the $\cB_{M}^{loc}$-modules admitting locally a finiteness lattice.

Let $(x,u)$ be a symplectic coordinate system on $X=T^\ast M$. Then
\begin{align*}
\rho&: X \to M\\
    &  (x,u) \mapsto x.
\end{align*}
We denote by $\Mod_{\coh}^{\rho-\fin}(\cA_{X})$ (resp. $\Mod_{\coh}^{\rho-\fin}(\cA_{X}^{loc})$) the full sub-category of the category $\Mod_{\coh}(\cA_{X})$ (resp. $\Mod_{\coh}(\cA_{X}^{loc})$) the objects of which are the $\rho$-finite modules.

\begin{Rem}\label{lem:finhol}
A coherent $\rho$-finite $\cA_{X}^{loc}$-module $\cM$ is holonomic.
\end{Rem}
 
We have a morphism of $\C^\hbar$-algebras 
\begin{equation*}
\rho^\dagger:\rho^{-1}\cO_M^\hbar \to  \cA_X, \; \rho^\dagger(x_i)=x_i \textnormal{ for } 1 \leq i \leq n. 
\end{equation*}
There is also a monomorphism of $\C^\hbar$-algebras from $\rho^{-1}\cB_M$ to $\cA_X$ defined as follows.
\begin{eqnarray}
\psi: \rho^{-1}\cB_{M}  \to \cA_{X}, & \;  x_i  \mapsto  x_i, &  \hbar \partial_i  \mapsto  u_i.
\end{eqnarray} 
We set
\begin{align*}
\begin{aligned}
\rho^{\flat}:&\Mod(\cB_{M})  \to  \Mod(\cA_{X})\\
                 & \cN \mapsto \cA_{X} \te_{\rho^{ -1}\cB_{M}} \rho^{-1} \cN
\end{aligned}
&&
\begin{aligned}
\rho^{\natural}:&\Mod(\cB_{M}^{loc})  \to  \Mod(\cA^{loc}_{X})\\
                 & \cN \mapsto \cA_{X}^{loc} \te_{\rho^{ -1}\cB_{M}^{loc}} \rho^{-1} \cN.
\end{aligned}
\end{align*}

\begin{prop}
 The functors $\rho^\flat$ and $\rho_\ast$ (resp.  $\rho^\natural$ and $\rho_\ast$) form the adjoint pair $(\rho^\flat,\rho_\ast)$  (resp. $(\rho^\natural,\rho_\ast)$).
\end{prop}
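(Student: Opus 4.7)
The adjunction is a standard consequence of the classical adjunction between inverse image and direct image for modules over a morphism of ringed spaces, combined with the tensor-hom adjunction. My plan is to decompose the $(\rho^\flat, \rho_\ast)$ adjunction into two well-known adjunctions and then transport the argument verbatim to the localized setting.

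First I would observe that $\psi : \rho^{-1}\cB_M \to \cA_X$ endows $\cA_X$ with the structure of an $(\cA_X, \rho^{-1}\cB_M)$-bimodule, so that $\rho^\flat(\cdot) = \cA_X \te_{\rho^{-1}\cB_M} \rho^{-1}(\cdot)$ sends $\cB_M$-modules to $\cA_X$-modules; equivalently, $(\rho, \psi)$ is a morphism of ringed spaces $(X, \cA_X) \to (M, \cB_M)$ and $\rho^\flat$ is the corresponding inverse image functor.

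Next, for $\cN \in \Mod(\cB_M)$ and $\cM \in \Mod(\cA_X)$, I would chain the natural isomorphisms
\begin{align*}
\Hom_{\cA_X}\bigl(\cA_X \te_{\rho^{-1}\cB_M} \rho^{-1}\cN, \cM\bigr)
&\simeq \Hom_{\rho^{-1}\cB_M}(\rho^{-1}\cN, \cM) \\
&\simeq \Hom_{\cB_M}(\cN, \rho_\ast \cM),
\end{align*}
where the first isomorphism is the tensor-hom adjunction (using the $(\cA_X, \rho^{-1}\cB_M)$-bimodule structure on $\cA_X$) and the second is the standard $(\rho^{-1}, \rho_\ast)$ adjunction for sheaves of abelian groups, upgraded to sheaves of modules by checking that the unit $\cN \to \rho_\ast \rho^{-1}\cN$ and counit $\rho^{-1}\rho_\ast \cM \to \cM$ are compatible with the corresponding module structures. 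Naturality in $\cN$ and $\cM$ is automatic at each step.

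For the $(\rho^\natural, \rho_\ast)$ adjunction the argument is formally identical: $\psi$ extends by localization to a morphism $\rho^{-1}\cB_M^{loc} \to \cA_X^{loc}$, and the same two-step chain of adjunctions applies with every sheaf of rings replaced by its localization. I expect no genuine difficulty here, since the proof is essentially a bookkeeping exercise; the only point requiring vigilance is to keep track of the correct sheaf of rings with respect to which each $\Hom$-set is taken, so that the bimodule structures match up at every step.
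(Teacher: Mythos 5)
Your argument is correct and is exactly the standard reasoning the paper has in mind: the paper's own proof is simply ``Clear,'' relying on precisely this composition of the tensor-hom adjunction for extension of scalars along $\psi$ with the $(\rho^{-1},\rho_\ast)$ adjunction for sheaves. Nothing further is needed.
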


\begin{proof}
Clear.
\end{proof}

\begin{prop}
The functor $\rho^\flat$ (resp. $\rho^\natural$) is exact.
\end{prop}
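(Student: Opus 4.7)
The plan is to reduce the exactness of $\rho^\flat$ to the flatness of $\cA_X$ over $\rho^{-1}\cB_M$ via the morphism $\psi$, and then to transport this flatness from its commutative shadow using the $\hbar$-adic filtration.

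First I would note that $\rho^{-1}$ is exact, so $\rho^\flat$ is exact if and only if $\cA_X$ is flat as a right $\rho^{-1}\cB_M$-module (via $\psi$). I would then filter by the $\hbar$-adic filtration on both sides. Both $\cA_X$ and $\rho^{-1}\cB_M$ are $\hbar$-complete, $\hbar$-torsion free, and $\psi$ is compatible with the $\hbar$-filtrations. Passing to the associated graded, one has $\gr_\hbar \cA_X \simeq \cO_X \simeq \cO_{T^\ast M}$ and $\gr_\hbar \rho^{-1}\cB_M \simeq \rho^{-1}(\cO_M \otimes_\C \C[v_1,\ldots,v_n]) = \rho^{-1}\cO_M[v]$, where $v_i$ is the symbol of $\hbar\partial_i$; the graded of $\psi$ sends $v_i$ to the fibre coordinate $u_i$. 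This is exactly the setting of the previous subsection, so the flatness of $\gr_\hbar \cA_X$ over $\gr_\hbar \rho^{-1}\cB_M$ is precisely the flatness of $\cO_X$ over $\rho^{-1}\cO_M[t]$ used in the proof of exactness of $p^\sharp$ (Malgrange).

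Next I would lift this graded flatness to filtered flatness. The standard route is: given a short exact sequence $0 \to \cN' \to \cN \to \cN'' \to 0$ of $\rho^{-1}\cB_M$-modules, tensor with $\cA_X$ and check that the kernel of $\cA_X \otimes_{\rho^{-1}\cB_M} \cN' \to \cA_X \otimes_{\rho^{-1}\cB_M} \cN$ vanishes. Using the $\hbar$-adic filtration and the fact that both $\cA_X$ and $\rho^{-1}\cB_M$ have no $\hbar$-torsion, one reduces to checking the vanishing on the associated graded, which follows from the commutative flatness. This is a standard Artin--Rees / $\hbar$-adic Nakayama argument (cf.\ the formalism of Chapter~1 of \cite{KS3} for DQ-algebras and their modules).

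Having obtained that $\rho^\flat$ is exact, the statement for $\rho^\natural$ is then a formal consequence: $\cA_X^{loc} = \C^{\hbar,loc} \otimes_{\C^\hbar} \cA_X$ and $\cB_M^{loc} = \C^{\hbar,loc} \otimes_{\C^\hbar} \cB_M$, and localization by $\hbar$ is an exact flat operation, so the flatness of $\cA_X$ over $\rho^{-1}\cB_M$ yields the flatness of $\cA_X^{loc}$ over $\rho^{-1}\cB_M^{loc}$, whence $\rho^\natural$ is exact. The main obstacle I anticipate is the lifting step from graded to filtered flatness: one must justify carefully that the $\hbar$-adic topologies are well-behaved enough (separation and completeness on stalks) to apply the Nakayama-type argument, but this is standard in the DQ-module framework.
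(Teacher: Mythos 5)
Your strategy coincides with the paper's: reduce the exactness of $\rho^\flat$ to the flatness of $\cA_X$ over $\rho^{-1}\cB_M$, feed in the commutative flatness of $\cO_X$ over $\rho^{-1}\cO_M[t]$ (Malgrange) at the level of the associated graded, and lift along the $\hbar$-adic filtration; the paper explicitly adapts the proof of Theorem 1.6.6 of \cite{KS3} in exactly this way, and your localization argument for $\rho^\natural$ is the one used there.

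The one step you defer to a ``standard Artin--Rees / $\hbar$-adic Nakayama argument'' is, however, the entire content of the proof, and as stated it does not go through for arbitrary $\cB_M$-modules. An $\hbar$-divisible module such as $\cB_M^{loc}/\cB_M$ has vanishing associated graded, so nothing about the filtered object can be deduced from the graded one without a completeness hypothesis; and $\cA_X \te_{\rho^{-1}\cB_M} \rho^{-1}\cN$ is in general neither $\hbar$-adically separated nor complete. The paper's resolution, which you should make explicit, is: first reduce to stalks and then, since $\Tor$ commutes with filtrant colimits, to finitely generated modules $N$ over $B=\cB_{M,\rho(z)}$; next split $N$ as an extension of a torsion-free module by a torsion module; for torsion-free $N$ the complex $A \Lte_B N$ is cohomologically complete (Theorem 1.6.1 of \cite{KS3}) with $\gr_\hbar(A \Lte_B N) \simeq Q \te_R N/\hbar N$ concentrated in degree zero, so Proposition 1.5.8 of \cite{KS3} gives concentration in degree zero; for torsion modules one argues by induction on the torsion index starting from $A \Lte_B N_1 \simeq Q \te_R N_1$ and passes to the colimit. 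With these reductions inserted, your argument is the paper's.
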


\begin{proof}
We need to prove that for every $\cB_M$-module $\cN$
\begin{equation*}
\Hn^i(\cA_{X} \Lte_{\rho^{-1}\cB_{M}}  \rho^{-1}\cN)=0 \quad for \quad i<0.
\end{equation*}
which is equivalent to prove that for every $z \in X$ and every $\cN \in \Mod(\cB_M)$  
\begin{equation*}
\Hn^i(\cA_{X,z} \Lte_{\rho^{-1}\cB_{M,z}} \rho^{-1} \cN_z)=0 \quad for \quad any \quad i <0.
\end{equation*}

For that purpose, we adapt the proof of Theorem 1.6.6 of \cite{KS3}. Let $z \in X$. We set $A=\cA_{X,z}$, $B=\cB_{M,\rho(z)}$, $N=\cN_{\rho(z)}$, $Q=\cO_{X,z}$ and $R=\cO_{M,z}[t_1,\ldots,t_n]$ and we recall that $Q$ is flat over $R$ (see \cite[example 4.11]{Mal}). The problem reduces to show that $\Tor^i_B(A,N)=0$ for all $i<0$. Since, $\Tor$ commute with filtrant colimits, we can further assume that $N$ is a $B$-module of finite type. Furthermore, every finitely generated $B$-module is an extension of a $B$-module without $\hbar$-torsion by a module of $\hbar$-torsion. So, we just need to treat the case of modules without $\hbar$-torsion and the case of $\hbar$-torsion modules.

\noindent (i) We assume that $N$ has no $\hbar$-torsion, then
\begin{equation*}
\gr_\hbar(A \Lte_B N) \simeq Q \te_{R}  N/ \hbar N.
\end{equation*}
Moreover, $N$ is a finitely generated $B$-module. Hence, $A \Lte_B N$ belongs to $\Der^b_f(A)$. It follows from Theorem 1.6.1 of \cite{KS3} that $A \Lte_B N$ is cohomologically complete. 
Hence, Proposition 1.5.8 of \cite{KS3} implies that $A \Lte_B N$ is concentrated in degree zero.\\
\noindent (ii) We assume that $N$ is of $\hbar$-torsion. Clearly $N=\bigcup_{k \in \N^\ast} N_k$ where \linebreak $N_k=\lbrace n \in N | \hbar^kn=0 \rbrace$ and
\begin{equation*}
A \Lte_B N_1 \simeq Q \Lte_R N_1 \simeq Q \te_R N_1.
\end{equation*}
Thus, $A \Lte_B N_1$ is concentrated in degree zero. By recursion, we extend this to all the $N_k$. Since $\Tor_B$ commutes with filtrant colimits $\Tor^i_B(A,N)=0$  for $i \neq 0$. It follows that for any finitely generated module $N$, $A \Lte_B N$ is concentrated in degree zero.
 
The result for $\rho^\natural$ follows immediately since $\rho^\natural$ is the composition of $\rho^\flat$ with the exact functor $\C_X^{\hbar,loc} \te_{\C^\hbar_X} \cdot$. 
\end{proof}

\begin{prop}\label{prop:rhoast}
If $\cM \in \Mod_{\coh}^{\rho-\fin}(\cA_X)$, then $\rho_\ast \cM$ is an object of $\Mod_{\cO^\hbar_M-\coh}(\cB_M)$.
\end{prop}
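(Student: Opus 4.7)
The plan is to reduce the problem to the commutative Proposition~\ref{prop:past} via an $\hbar$-adic dévissage. First, I would note that $\rho_\ast\cM$ is automatically a $\cB_M$-module: the morphism $\psi:\rho^{-1}\cB_M \to \cA_X$ endows $\cM$ with a $\rho^{-1}\cB_M$-module structure, and applying $\rho_\ast$ together with the adjunction unit $\cB_M \to \rho_\ast\rho^{-1}\cB_M$ yields the desired action. Hence the real content of the proposition is that $\rho_\ast\cM$ is coherent as an $\cO_M^\hbar$-module.

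Using the lemma recalled above, which expresses every coherent $\cA_X$-module as an extension of an $\hbar$-torsion-free module by an $\hbar$-torsion one, and exploiting the exactness of $\rho_\ast$ on sheaves supported in $\Supp(\cM)$ (since $\rho|_{\Supp(\cM)}$ is finite, hence proper with finite fibres), I would treat the two cases separately. If $\cM$ is locally annihilated by $\hbar^k$, an induction on $k$ using the short exact sequences $0 \to \hbar\cM \to \cM \to \cM/\hbar\cM \to 0$ reduces to $k=1$, in which case $\cM$ is a $\rho$-finite coherent $\cO_X$-module and Proposition~\ref{prop:past} gives coherence of $\rho_\ast\cM$ over $\cO_M$, hence over $\cO_M^\hbar$.

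If instead $\cM$ has no $\hbar$-torsion, set $\cM_0 := \cM/\hbar\cM$; this is a $\rho$-finite coherent $\cO_X$-module, and by Proposition~\ref{prop:past} the pushforward $\rho_\ast\cM_0$ is coherent over $\cO_M$. The finiteness of $\rho|_{\Supp(\cM)}$ allows one to commute $\rho_\ast$ with multiplication by $\hbar$, so that $\rho_\ast\cM / \hbar\rho_\ast\cM \simeq \rho_\ast\cM_0$ and $\rho_\ast\cM$ inherits the absence of $\hbar$-torsion. The concluding step is to lift a local system of generators of the coherent $\cO_M$-module $\rho_\ast\cM_0$ to generators of $\rho_\ast\cM$ as an $\cO_M^\hbar$-module; this relies on the fact that $\rho_\ast\cM$ is $\hbar$-adically complete and separated, inherited from the cohomological completeness of $\cM$ (cf.\ \cite[Ch.~1]{KS3}) together with the commutation $\rho_\ast \varprojlim_k \cM/\hbar^k\cM \simeq \varprojlim_k \rho_\ast(\cM/\hbar^k\cM)$, valid because of the finite fibres of $\rho|_{\Supp(\cM)}$.

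The main obstacle will be carrying out this lifting carefully: ensuring that $\rho_\ast\cM$ is genuinely $\hbar$-adically complete on small enough opens, and that a topological Nakayama-type argument then produces an $\cO_M^\hbar$-linear surjection $(\cO_M^\hbar)^{\oplus r} \twoheadrightarrow \rho_\ast\cM$ out of an $\cO_M$-linear surjection $(\cO_M)^{\oplus r} \twoheadrightarrow \rho_\ast\cM_0$. This is the DQ-analog of the standard lifting step used for coherent sheaves over $\cO_X^\hbar$, and it is the place where the hypothesis of $\rho$-finiteness enters in an essential way, by allowing $\rho_\ast$ to commute with the relevant projective limits.
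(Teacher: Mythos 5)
Your route is genuinely different from the paper's. The paper disposes of this proposition in one line: $\rho_\ast\cM$ is visibly a $\cB_M$-module, and the $\cO_M^\hbar$-coherence is an immediate application of Grauert's direct image theorem for DQ-modules (\cite[Theorem 3.2.1]{KS3}), since $\rho$ is finite, hence proper, on $\Supp(\cM)$. You instead rebuild the finite-map case of that theorem by hand: a d\'evissage into $\hbar$-torsion and $\hbar$-torsion-free parts, exactness of $\rho_\ast$ on $\rho$-finite sheaves, reduction of the graded pieces to Proposition~\ref{prop:past}, and a completeness-plus-Nakayama lifting of generators. This is more elementary and self-contained (it only uses the commutative coherence statement already proved in the paper, not the general direct image theorem), and it mirrors almost exactly the argument the paper itself deploys later in the converse direction, in the proof of Proposition~\ref{prop:finitness}(i); the price is length and the care needed in the completeness step, which the paper's citation makes unnecessary.

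One step is missing at the end of your torsion-free case: the surjection $(\cO_M^\hbar)^{\oplus r}\twoheadrightarrow\rho_\ast\cM$ only gives that $\rho_\ast\cM$ is locally finitely generated, and over $\cO_M^\hbar$ that is strictly weaker than coherence. You must close this gap with the coherence criterion of Theorem~\ref{thm:critcohabelienne} (\cite[Theorem 1.3.6]{KS3}): since $\rho_\ast$ is exact on $\rho$-finite sheaves, $\hbar^n\rho_\ast\cM/\hbar^{n+1}\rho_\ast\cM\simeq\rho_\ast(\hbar^n\cM/\hbar^{n+1}\cM)$ is $\cO_M$-coherent by Proposition~\ref{prop:past}, and the criterion then upgrades local finite generation to coherence. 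You have all the ingredients in hand, so this is a one-line fix, but as written the conclusion does not follow.
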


\begin{proof}
 Let $\cM \in \Mod^{\coh}_{\rho-\fin}(\cA_X)$. The sheaf $\rho_\ast \cM$ is clearly a $\cB_M$-module and by Grauert's direct image theorem for DQ-modules (cf. \cite[Thm 3.2.1 ]{KS3}) $\rho_\ast \cM$ is $\cO_M^\hbar$-coherent.
\end{proof}

\begin{prop}\label{prop:rhosharp}
 If $\cN \in \Mod_{\cO^\hbar_M-\coh}(\cB_M)$, then $\rho^\flat \cN \in \Mod_{\coh}^{\rho-\fin}(\cA_X)$.
\end{prop}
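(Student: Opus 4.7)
The strategy is to imitate the proof of Proposition \ref{prop:psharp} after reducing modulo $\hbar$. More precisely, I will first establish the coherence of $\rho^\flat\cN$ by a general coherence argument, and then deduce $\rho$-finiteness from the analogous statement in the commutative setting by identifying the $\hbar$-reduction of $\rho^\flat\cN$ with $\rho^\sharp(\cN/\hbar\cN)$.

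\textbf{Step 1 (coherence).} Since $\cN$ is $\cO_M^\hbar$-coherent, it is locally of finite type as an $\cO_M^\hbar$-module and a fortiori as a $\cB_M$-module. Because $\cB_M$ is a Noetherian sheaf of rings (Hilbert's basis theorem applied stalkwise to the Noetherian ring $\cO_{M,x}^\hbar$), $\cN$ is $\cB_M$-coherent. Picking a local presentation $\cB_M^{\oplus p}\to\cB_M^{\oplus q}\to\cN\to 0$, applying the exact functor $\rho^\flat$ (exactness was just established in the previous proposition) yields a presentation $\cA_X^{\oplus p}\to\cA_X^{\oplus q}\to\rho^\flat\cN\to 0$; since $\cA_X$ is a coherent sheaf of rings, $\rho^\flat\cN$ is $\cA_X$-coherent.

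\textbf{Step 2 (reduction modulo $\hbar$).} Under $\psi$, the generators $\hbar\partial_i$ are sent to $u_i$, which modulo $\hbar$ is still $u_i$. Hence the composition $\rho^{-1}\cB_M\xrightarrow{\psi}\cA_X\twoheadrightarrow\cO_X$ factors through $\rho^{-1}\cB_M/\hbar\simeq\rho^{-1}\cO_M[t_1,\ldots,t_n]$ and coincides with the ring morphism $\rho^{-1}\cO_M[t]\to\cO_X$, $t_i\mapsto u_i$, of Subsection~3.1 applied with $D=M$. Combined with the exactness of $\rho^\flat$, this gives a natural isomorphism
\begin{equation*}
(\rho^\flat\cN)/\hbar(\rho^\flat\cN)\;\simeq\;\cO_X\otimes_{\rho^{-1}\cO_M[t]}\rho^{-1}(\cN/\hbar\cN)\;=\;\rho^\sharp(\cN/\hbar\cN).
\end{equation*}

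\textbf{Step 3 ($\rho$-finiteness).} Because $\cN$ is $\cO_M^\hbar$-coherent, $\cN/\hbar\cN$ is $\cO_M$-coherent, so it lies in $\Mod_{\cO_M\textrm{-}\coh}(\cO_M[t])$. Proposition \ref{prop:psharp} then implies that $\rho^\sharp(\cN/\hbar\cN)$ is a $\rho$-finite coherent $\cO_X$-module. On the other hand, for any coherent $\cA_X$-module $\cM$, Nakayama's lemma yields $\Supp(\cM)=\Supp(\cM/\hbar\cM)$; applying this to $\cM=\rho^\flat\cN$ (coherent by Step~1) and using Step~2, we obtain
\begin{equation*}
\Supp(\rho^\flat\cN)\;=\;\Supp\bigl(\rho^\sharp(\cN/\hbar\cN)\bigr),
\end{equation*}
which is $\rho$-finite. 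Together with Step~1 this proves $\rho^\flat\cN\in\Mod_{\coh}^{\rho\textrm{-}\fin}(\cA_X)$.

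\textbf{Main obstacle.} The only delicate point is Step~2: one must verify carefully that the base-change isomorphism for $\rho^\flat$ is compatible with reduction modulo $\hbar$, i.e.\ that the square formed by $\psi$ and the two projections to $\cO_M[t]$ and $\cO_X$ commutes. This is ultimately a bookkeeping statement about the Moyal--Weyl product, using $\psi(\hbar\partial_i)=u_i$; once granted, Proposition \ref{prop:psharp} handles the rest.
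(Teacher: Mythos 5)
Your proof is correct, but it follows a genuinely different route from the paper's. The paper proves $\rho$-finiteness by the usual d\'evissage of DQ-module theory: it treats separately the case where $\cN$ has no $\hbar$-torsion (where it uses precisely your identification $\Supp(\rho^\flat\cN)=\Supp(\gr_\hbar\rho^\flat\cN)=\Supp(\rho^\sharp(\cN/\hbar\cN))$), the case of uniform $\hbar$-torsion of index one (where $\rho^\flat\cN\simeq\rho^\sharp\cN$ outright), an induction on the torsion index, a localization on the base for general torsion modules, and finally the extension of a torsion-free module by a torsion one. Your key observation --- that $\Supp(\cM)=\Supp(\cM/\hbar\cM)$ for \emph{any} coherent $\cA_X$-module, because $\hbar$ lies in the Jacobson radical of the Noetherian stalks $\cA_{X,x}$ so Nakayama applies to the finitely generated $\cM_x$ --- renders the entire case analysis unnecessary and reduces everything to the base-change isomorphism of your Step 2, which is just right-exactness of the tensor product (you do not even need the exactness of $\rho^\flat$ for it). This is a cleaner argument; what the paper's d\'evissage buys is uniformity with the proof of Proposition \ref{prop:eqdq}, where the torsion decomposition genuinely is needed. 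Two minor points to tighten: in Step 1, Noetherianity of the \emph{sheaf of rings} $\cB_M$ (needed to pass from $\cO_M^\hbar$-coherence to a local $\cB_M$-presentation) is slightly stronger than stalkwise Noetherianity, but it is the exact $\hbar$-analogue of \cite[Theorem A.31]{KDmod} already invoked for $\cO_D[t]$; and in Step 3 you should record that $\cN/\hbar\cN$ is $\cO_M$-coherent (so that Proposition \ref{prop:psharp} indeed applies), which holds because $\cN$ is locally finitely presented over $\cO_M^\hbar$.
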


\begin{proof}
The coherence of $\rho^\flat \cN$ is clear. Now, assume that $\cN$ has no $\hbar$-torsion. Then
\begin{equation*}
\Supp ( \rho^\flat \cN)= \Supp( \gr_\hbar \rho^\flat \cN)=\Supp(\rho^\sharp (\cN / \hbar \cN)).
\end{equation*}
It follows that $\rho^\flat \cN \in \Mod_{\coh}^{\rho-\fin}(\cA_{X})$.

Let $\cN$ be a module of $\hbar$-torsion with $\torh(\cN)=1$. We have the following isomorphism
\begin{align*}
\rho^\flat \cN & \simeq \cA_X \te_{\rho^{-1}\cB_M} \rho^{-1} \cN\\
                            &\simeq \cO_X \te_{\rho^{-1} \cO_M[t]} \rho^{-1} \cN\\
                           & \simeq \rho^\sharp \cN.
\end{align*}
Thus, $\rho^\flat \cN$ is a $\rho$-finite $\cA_X$-module. 

Let $\cN$ be a module of $\hbar$-torsion with $\torh(\cN)=k$, $ k \geq 2$. Every $\cB_M$-submodule $\cP$ of $\cN$ with $\torh(\cP)=1$ is $\rho$-finite. Assume that $\rho^\flat \cP$ is $\rho$-finite for each $\cB_M$-submodule $\cP$ of $\cN$ which is of uniform $\hbar$-torsion with $\torh(\cP)=k-1$. Let $\cQ$ be a $\cB_M$-submodule of $\cN$ which is of $\hbar$-torsion with $\torh(\cQ)=k$. Then we have the following short exact sequence
\begin{equation*}
0 \to \hbar Q \to Q \to Q / \hbar Q \to 0.
\end{equation*}
Moreover, $\rho^\flat$ is an exact functor, then
\begin{equation*}
0 \to \rho^\flat (\hbar Q) \to \rho^\flat Q \to \rho^\flat(Q / \hbar Q) \to 0.
\end{equation*}
So, $\Supp(\rho^\flat Q) \subset \Supp( \rho^\flat (\hbar Q)) \cup \Supp(\rho^\flat(Q / \hbar Q))$ which implies that $\rho^\flat Q$ is $\rho$-finite. In particular, $\rho^\flat\cN$ is $\rho$-finite.

Assume that $\cN$ is of $\hbar$-torsion. Thus, there exists an open covering $(U_i)_{i  \in I}$ of $M$ such that $\torh(\cN|_{U_i})=k_i$ with $k_i \in \N$. We set $\rho_i:=\rho|_{\rho^{-1}(U_i)}^{U_i}$. It follows from the previous cases that $\rho_i^\flat \cN|_{U_i}$  is $\rho_i$-finite. Since being finite for a morphism is a property local on the base, the sheaf $\rho^\flat\cN$ is $\rho$-finite.

Let $\cN \in \Mod_{\cO^\hbar_M-\coh}(\cB_{M})$. Then $\cN$ is the extension by an object of $\Mod_{\cO^\hbar_M-\coh}(\cB_{M})$ of $\hbar$-torsion of an object of $\Mod_{\cO^\hbar_M-\coh}(\cB_{M})$ without $\hbar$-torsion. It follows from what is preceding that $\rho^\flat \cN$ is $\rho$-finite.
\end{proof}

The following proposition is the analogue for DQ-modules of \cite[Lemma 4.3]{KV}.
\begin{prop}\label{prop:eqdq}
The functors
\begin{equation*}
\xymatrix{
\Mod_{\coh}^{\rho-\fin}(\cA_{X}) \ar@<.4ex>[r]^-{\rho_\ast} & \ar@<.4ex>[l]^-{\rho^{\flat}}\Mod_{\cO_M^{\hbar}-\coh}(\cB_{M})
 }\end{equation*}
 are equivalences of categories and inverse to each other. 
\end{prop}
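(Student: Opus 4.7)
The plan is to show that the unit $\eta_\cN:\cN\to\rho_\ast\rho^\flat\cN$ and the counit $\varepsilon_\cM:\rho^\flat\rho_\ast\cM\to\cM$ of the adjoint pair $(\rho^\flat,\rho_\ast)$ are isomorphisms for every $\cN\in\Mod_{\cO_M^\hbar-\coh}(\cB_M)$ and every $\cM\in\Mod_\coh^{\rho-\fin}(\cA_X)$. By Propositions \ref{prop:rhoast} and \ref{prop:rhosharp}, $\rho^\flat$ and $\rho_\ast$ restrict to functors between the two categories, so this will suffice. Throughout, I will use the obvious base change isomorphisms $\gr_\hbar\rho^\flat\cN\simeq\rho^\sharp(\gr_\hbar\cN)$ and, when $\cM$ is $\rho$-finite coherent, $\gr_\hbar(\rho_\ast\cM)\simeq\rho_\ast(\gr_\hbar\cM)$ (the latter because $\rho_\ast$ is exact on coherent sheaves whose support is finite over the base).

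First I would treat the case where $\cN$ (resp. $\cM$) has no $\hbar$-torsion. Both sides of $\eta_\cN$ and $\varepsilon_\cM$ are then coherent modules over the DQ-type algebras $\cB_M$ and $\cA_X$, hence cohomologically complete in the sense of \cite[Ch.~1]{KS3}. By the criterion for morphisms between cohomologically complete modules, it is enough to check that $\gr_\hbar\eta_\cN$ and $\gr_\hbar\varepsilon_\cM$ are isomorphisms; but these are exactly the unit and counit of the commutative adjunction $(\rho^\sharp,\rho_\ast)$ applied to $\gr_\hbar\cN$ and $\gr_\hbar\cM$, and so are isomorphisms by Theorem \ref{thm:equicom} (using that $\gr_\hbar\cN$ is $\cO_M$-coherent and $\gr_\hbar\cM$ is $\rho$-finite coherent).

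Next I would handle $\hbar$-torsion modules by induction on $\torh$. If $\torh(\cN)=1$ then $\cN$ is an $\cO_M[t]$-module in the category $\Mod_{\cO_M-\coh}(\cO_M[t])$, $\rho^\flat\cN\simeq\rho^\sharp\cN$ as $\cA_X$-modules, and the statement reduces directly to Theorem \ref{thm:equicom}; similarly for $\cM$ with $\torh(\cM)=1$. For higher torsion index, the short exact sequence $0\to\hbar\cN\to\cN\to\cN/\hbar\cN\to 0$ (resp. for $\cM$), the exactness of $\rho^\flat$ (Proposition proved above) and of $\rho_\ast$ on $\rho$-finite sheaves, together with the five lemma, allow the inductive step.

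Finally, any $\cN\in\Mod_{\cO_M^\hbar-\coh}(\cB_M)$ (resp. $\cM\in\Mod_\coh^{\rho-\fin}(\cA_X)$) fits in a short exact sequence whose outer terms are a torsion-free module and an $\hbar$-torsion module; applying the exact functors $\rho^\flat$ and $\rho_\ast$ and invoking the five lemma once more combines the two previous cases into the general statement. The main obstacle is the torsion-free case, which rests on the cohomological completeness argument; once that is secured, the reduction via $\gr_\hbar$ to the commutative equivalence of Theorem \ref{thm:equicom} closes the proof.
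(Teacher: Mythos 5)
Your plan follows essentially the same route as the paper: split a coherent module into a torsion-free part and an $\hbar$-torsion part, handle the torsion-free case by applying $\gr_\hbar$ and invoking cohomological completeness to reduce to the commutative equivalence of Theorem \ref{thm:equicom}, and handle torsion by induction on the torsion index. All of that is sound.

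There is one point you pass over too quickly. In this paper ``of $\hbar$-torsion'' only means \emph{locally} of uniform $\hbar$-torsion, so your induction on $\torh$ covers only modules with a globally defined torsion index. Passing from that case to a general $\hbar$-torsion module is not automatic, because the functor $\rho^\flat\rho_\ast$ is not local on $X$: to evaluate the counit at a point $z\in X$ you must control the whole fibre $\rho^{-1}(\rho(z))\cap\Supp(\cM)$, on which the local torsion indices may differ. The fix is to localize on the base $M$ rather than on $X$: using the finiteness of $\rho$ on $\Supp(\cM)$ (Proposition \ref{prop:sheafdecompo}) one finds an open $V\subset M$ such that $\cM|_{\rho^{-1}(V)}$ has uniform torsion index equal to the maximum of the indices over the finitely many fibre points, and then your inductive case applies on $\rho^{-1}(V)$. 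With that step inserted your argument is complete.
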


\begin{proof}

Once again, we will use the fact that a coherent DQ-module is the extension of a coherent DQ-module without $\hbar$-torsion by a DQ-module of $\hbar$-torsion. 
Let $\cM \in \Mod_{\coh}^{\rho-\fin}(\cA_X)$. The adjunction between $\rho_\ast$ and $\rho^\flat$ provides the map
\begin{equation}\label{map:DQnotorfinal}
\rho^\flat \rho_\ast \cM \to \cM.
 \end{equation}

\noindent (i) Assume that $\cM$ has no $\hbar$-torsion. Applying the $\gr_\hbar$ functor to \eqref{map:DQnotorfinal}, we get
\begin{equation}\label{map:DQnotor}
\rho^\sharp \rho_\ast (\cM / \hbar \cM) \to (\cM / \hbar \cM).
 \end{equation}
The module $(\cM / \hbar \cM)$ belongs to $\Mod_{\coh}^{\rho-\fin}(\cO_X)$, thus by the Theorem \ref{thm:equicom}, it follows that the morphism \eqref{map:DQnotor} is an isomorphism.  Since $\cM$ and $\rho^\flat \rho_\ast \cM$ are complete and have no $\hbar$-torsion, it follows that the morphism \eqref{map:DQnotorfinal} is an isomorphism.

\noindent (ii) Assume that $\cM$ is of $\hbar$-torsion with $\torh(\cM)=1$. Then we have the following commutative diagram
\begin{equation*}
\xymatrix{ \rho^\flat \rho_\ast \cM \ar[r] \ar[d]^-{\wr} & \cM \ar[d]^-{\wr} \\
                 \rho^\sharp \rho_\ast \cM \ar[r]^-{\sim}  & \cM 
}
\end{equation*}
which implies that the map \eqref{map:DQnotorfinal} is an isomorphism.\\
\noindent (iii) If $\cM$ is a module of $\hbar$-torsion with $\torh(\cM)=k$, the fact that the morphism \eqref{map:DQnotorfinal} is an isomorphism is proved by recursion on the index of $\hbar$-torsion.\\ 
\noindent (iv) Assume that $\cM$ is of $\hbar$-torsion. Let $z=(x,u)$ a point of $X=T^\ast M$. Consider the stalk of morphism \eqref{map:DQnotorfinal} at $z$
\begin{equation}\label{map:DQmorstalk}
(\rho^\flat \rho_\ast \cM)_z \to \cM_z.
\end{equation}
If $\rho^{-1}(x) \cap \Supp(\cM)= \emptyset$, it is clear that \eqref{map:DQmorstalk} is an isomorphism.
Assume that $\rho^{-1}(x) \cap \Supp(\cM)= \lbrace z_1, \ldots, z_s \rbrace$. Since $\cM$ is coherent we can find open pairwise disjoint neighbourhoods $U_1^\prime, \ldots, U^\prime_s$ of $z_1, \ldots, z_s$ such that $\overline{U^\prime_i} \cap \overline{U^\prime_j}=\emptyset$ for $i \neq j$ and $\cM|_{U^\prime_i}$ is of $\hbar$-torsion with $\torh(\cM|_{U^\prime_i})=k_i$. It follows from Proposition \ref{prop:sheafdecompo}, that there exists an open subset $V$ of $M$ such that $\rho^{-1}(V)$ is an open neighbourhood of $z_1, \ldots, z_s$ and $\torh(\cM|_{T^\ast V})=k$ with $k=\max_{1 \leq i \leq s} \lbrace k_i \rbrace$.

Using the fact that the morphism \eqref{map:DQnotorfinal} is an isomorphism for $\hbar$-torsion modules with index of $\hbar$-torsion equal to $k$, it follows that
\begin{align*}
(\rho^\flat \rho_\ast \cM )|_{\rho^{-1}(V)} \to \cM|_{\rho^{-1}(V)}.
\end{align*}
is an isomorphism which proves the claim.

\noindent(v) Let $\cM$ be a $\rho$-finite $\cA_X$-module. Then $\cM$ is the extension of a $\rho$-finite module without $\hbar$-torsion by a $\rho$-finite module of $\hbar$-torsion. Using the preceding cases the result follows immediately.

The proof that the unit $\id\to \rho_\ast \rho^\flat$ is an isomorphism is simpler and follows a similar path. 
\end{proof}

\begin{lemme}\label{lem:lattice}
Let $\cM \in \Mod_{\coh}^{\rho-\fin}(\cA_{X}^{loc})$ then, $\cM$ is locally good on the base i.e. there exists an open covering $(U_i)_{i \in I}$ of $M$ such that for every $i \in I$, $\cM|_{\rho^{-1}(U_i)}$ is good.
\end{lemme}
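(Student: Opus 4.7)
The plan is to prove the stronger statement that, for each $x_0 \in M$, there exists an open neighborhood $U$ of $x_0$ in $M$ such that $\cM|_{\rho^{-1}(U)}$ admits a \emph{global} $\cA_X|_{\rho^{-1}(U)}$-lattice; restricting such a lattice to any relatively compact open subset of $\rho^{-1}(U)$ then yields the goodness of $\cM|_{\rho^{-1}(U)}$, and letting $x_0$ vary produces the required cover $(U_i)_{i \in I}$ of $M$.

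First I would use the $\rho$-finiteness hypothesis to localize the problem. Fix $x_0 \in M$. Since $\rho|_{\Supp(\cM)}$ is proper with finite fibers, $\rho^{-1}(x_0) \cap \Supp(\cM) = \{z_1,\ldots,z_s\}$ is finite. Invoking the local existence of $\cA_X$-lattices for coherent $\cA_X^{loc}$-modules (a standard fact in the theory of DQ-modules, see~\cite{KS3}), I choose for each $i$ an open neighborhood $W_i \subset X$ of $z_i$ together with an $\cA_X|_{W_i}$-lattice $\cL_i \subset \cM|_{W_i}$. After shrinking, one may assume the $W_i$ to be pairwise disjoint. Since $\rho|_{\Supp(\cM)}$ is closed, the image $\rho\bigl(\Supp(\cM) \setminus \bigsqcup_i W_i\bigr)$ is closed in $M$ and does not contain $x_0$, so its complement furnishes an open neighborhood $U$ of $x_0$ such that
\begin{equation*}
\rho^{-1}(U) \cap \Supp(\cM) \subset \bigsqcup_{i=1}^{s} W_i .
\end{equation*}

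The final step is to glue these pieces into a global lattice on $\rho^{-1}(U)$. Consider the open covering of $\rho^{-1}(U)$ consisting of the sets $W_i \cap \rho^{-1}(U)$ together with the open set $\rho^{-1}(U) \setminus \Supp(\cM)$. On each $W_i$ take the subsheaf $\cL_i$ of $\cM|_{W_i}$ and on the complement of the support take the zero subsheaf. These local data agree on overlaps, because the $W_i$ are pairwise disjoint and each $\cL_i \subset \cM|_{W_i}$ vanishes outside $\Supp(\cM)$. Gluing produces an $\cA_X$-submodule $\cN \subset \cM|_{\rho^{-1}(U)}$, which is coherent since coherence is local and each $\cL_i$ is coherent. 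The inclusion $\cN^{loc} \hookrightarrow \cM|_{\rho^{-1}(U)}$ is then verified locally: it is the identity on $\rho^{-1}(U) \setminus \Supp(\cM)$ and coincides with the isomorphism $\cL_i^{loc} \stackrel{\sim}{\to} \cM|_{W_i}$ on each $W_i$, hence it is a global isomorphism. This exhibits $\cN$ as a lattice of $\cM|_{\rho^{-1}(U)}$, and $\cM|_{\rho^{-1}(U)}$ is therefore good.

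The main obstacle is the input step, namely the local existence of $\cA_X$-lattices for coherent $\cA_X^{loc}$-modules; once this is granted, the finiteness of $\rho^{-1}(x_0)\cap\Supp(\cM)$ together with the closedness of $\rho|_{\Supp(\cM)}$ reduces the passage from a result local on $X$ to a result local on $M$ to an essentially topological gluing argument.
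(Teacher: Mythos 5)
Your proof is correct and follows essentially the same route as the paper: use $\rho$-finiteness of $\Supp(\cM)$ to isolate the finitely many points $z_1,\dots,z_s$ over $x_0$, invoke the local existence of lattices for coherent $\cA_X^{loc}$-modules near each $z_i$, shrink the base neighbourhood so that the support over it lies in the disjoint union of the $W_i$, and patch. The only cosmetic difference is that the paper assembles the lattice as $\prod_i \gamma_{i\ast}\gamma_i^{-1}\cM_i$ via its decomposition result for $p$-finite sheaves (Proposition \ref{prop:sheafdecompo}), whereas you glue the subsheaves $\cL_i$ with the zero subsheaf off the support; these yield the same submodule.
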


\begin{proof}
If $\rho^{-1}(x) \cap \Supp(\cM) = \emptyset$, then by Lemma \ref{lem:zerostalk}, there exists a neighbourhood $W$ of $x$ such that $\cM|_{W \times \C^n}=0$ and $0$ is a lattice of $\cM$ on $W \times \C^n$. Assume that $\rho^{-1}(x) \cap \Supp(\cM) = \lbrace z_1,\ldots, z_s \rbrace$. Since $\cM$ is coherent, we can find open pairwise disjoint neighbourhoods $U_1^\prime, \ldots, U^\prime_s$ of $z_1, \ldots, z_s$ such that $\overline{U^\prime_i} \cap \overline{U^\prime_j}=\emptyset$ and such that $\cM|_{U^\prime_i}$ admits a lattice $\cM_i$ for $1 \leq i \leq s$. It follows from Proposition \ref{prop:sheafdecompo}, that there exists  pairwise disjoint neighbourhoods $U_1, \ldots, U_s$ of $z_1, \ldots, z_s$ such that $U_i \subset U_i^\prime$ and such that if we write $\gamma_i:U_i \hookrightarrow V \times \C^n$ for the inclusion of $U_i$ into $V \times \C^n$ we have the isomorphism
\begin{equation*}
\cM|_{V \times  \C^n} \stackrel{\sim}{\to} \prod_{i=1}^s \gamma_{i\ast}\gamma_i^{-1} (\cM|_{V \times \C^n}).
\end{equation*}
It follows immediately that $\prod_{i=1}^s \gamma_{i\ast}\gamma_i^{-1} \cM_i$ is a lattice of $\cM|_{V \times \C^n}$.
\end{proof}
The following proposition is the analogue for DQ-modules of \cite[Proposition 4.4]{KV}.
\begin{prop}
The functors
\begin{equation*}
\xymatrix{
\Mod_{\coh}^{\rho-\fin}(\cA_{X}^{loc}) \ar@<.4ex>[r]^-{\rho_\ast} & \ar@<.4ex>[l]^-{\rho^{\natural}}\Mod_{\fgd}(\cB_{M}^{loc})
}
\end{equation*}
are equivalences of categories and inverse to each other.
\end{prop}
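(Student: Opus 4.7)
The plan is to bootstrap this localized equivalence from the already established non-localized equivalence of Proposition~\ref{prop:eqdq}, by reducing everything to lattices. By Lemma~\ref{lem:lattice}, any $\cM\in\Mod_{\coh}^{\rho-\fin}(\cA_X^{loc})$ admits, locally on the base $M$, a lattice $\cL$ which is a $\rho$-finite coherent $\cA_X$-module; dually, an object $\cN\in\Mod_{\fgd}(\cB_M^{loc})$ is, locally on $M$, of the form $\cL_0^{loc}$ for some $\cL_0\in\Mod_{\cO_M^\hbar-\coh}(\cB_M)$. Since the whole statement is local on $M$, I may assume throughout that such a global lattice exists, and argue by $(\cdot)^{loc}$-ifying the non-localized statements.

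First I would check that the two functors land in the claimed subcategories. For $\rho^\natural$, writing $\cN\simeq\cL_0^{loc}$, the standard compatibility of tensor product with scalar extension yields
\[
\rho^\natural\cN \;\simeq\; \cA_X^{loc}\te_{\rho^{-1}\cB_M^{loc}}\rho^{-1}(\cL_0^{loc}) \;\simeq\; \bl\cA_X\te_{\rho^{-1}\cB_M}\rho^{-1}\cL_0\br^{loc} \;=\; (\rho^\flat\cL_0)^{loc},
\]
and by Proposition~\ref{prop:rhosharp} the sheaf $\rho^\flat\cL_0$ lies in $\Mod_{\coh}^{\rho-\fin}(\cA_X)$, so its localization sits in $\Mod_{\coh}^{\rho-\fin}(\cA_X^{loc})$. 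For $\rho_\ast$, given $\cM$ with lattice $\cL$, the finiteness of $\rho|_{\Supp\cL}$ implies that $\rho_\ast$ is exact on sheaves with support in $\Supp\cL$ and commutes with the filtered colimit $(\cdot)^{loc}=\C^{\hbar,loc}\te_{\C^\hbar}\cdot$, so that $\rho_\ast\cM\simeq(\rho_\ast\cL)^{loc}$. Proposition~\ref{prop:rhoast} then exhibits $\rho_\ast\cL$ as a finiteness $\cB_M$-lattice of $\rho_\ast\cM$, and hence $\rho_\ast\cM\in\Mod_{\fgd}(\cB_M^{loc})$.

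It then remains to verify that the unit and counit of the localized adjunction are isomorphisms, but both inherit this property from Proposition~\ref{prop:eqdq} via $(\cdot)^{loc}$. Indeed, on a local lattice one has
\[
\rho^\natural\rho_\ast\cM \;\simeq\; (\rho^\flat\rho_\ast\cL)^{loc} \;\stackrel{\sim}{\longrightarrow}\; \cL^{loc} \;\simeq\; \cM,
\]
the middle arrow being $(\cdot)^{loc}$ applied to the counit of Proposition~\ref{prop:eqdq}; the unit $\cN\to\rho_\ast\rho^\natural\cN$ is handled symmetrically by combining the previous identifications with the unit of Proposition~\ref{prop:eqdq}. The main obstacle I anticipate is precisely the identification $\rho_\ast\cM\simeq(\rho_\ast\cL)^{loc}$: this commutation of $\rho_\ast$ with the filtered colimit defining localization is exactly where the $\rho$-finiteness hypothesis plays its essential role, and without it the reduction to the non-localized equivalence would collapse.
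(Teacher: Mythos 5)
Your proposal is correct and follows exactly the paper's route: the paper's own proof is the one-line remark that Lemma \ref{lem:lattice} reduces the statement to Proposition \ref{prop:eqdq}, and your argument is simply a careful expansion of that reduction (identifying $\rho^\natural\cN\simeq(\rho^\flat\cL_0)^{loc}$ and $\rho_\ast\cM\simeq(\rho_\ast\cL)^{loc}$ via $\rho$-finiteness, then localizing the unit and counit).
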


\begin{proof}
Using Lemma \ref{lem:lattice} the proof of the proposition reduces to Proposition \ref{prop:eqdq}.
\end{proof}

We will need the following coherence criterion (see \cite[Theorem  1.3.6]{KS3}) in order to establish Proposition \ref{prop:finitness}.
\begin{thm}\label{thm:critcohabelienne}
Let $(T,\cO_T)$ be a complex manifold endowed with a DQ-algebra $\cA_T$. Let $\cM$ be a locally finitely generated $\cA_T$-module. Then $\cM$ is coherent if and only if $\hbar^n \cM / \hbar^{n+1} \cM$ is a coherent $\cO_T$-module for any $n \geq 0$. 
\end{thm}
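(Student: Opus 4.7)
The plan is to prove the two implications separately, with the forward direction being routine and the converse requiring an $\hbar$-adic successive approximation argument that uses the coherence hypothesis on each graded piece.

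For necessity, I would argue as follows. Assume $\cM$ is $\cA_T$-coherent. Since $\cA_T$ is a coherent sheaf of rings and $\hbar$ is central, multiplication by $\hbar^n$ defines an $\cA_T$-linear endomorphism of $\cM$, so its image $\hbar^n\cM$ is a coherent $\cA_T$-submodule of $\cM$. The quotient $\hbar^n\cM/\hbar^{n+1}\cM$ is therefore coherent over $\cA_T$ and annihilated by $\hbar$, hence coherent over $\cA_T/\hbar\cA_T\simeq\cO_T$.

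For the converse, I would fix a point and work on a small enough neighborhood on which $\cM$ admits finitely many generators. Since the question is local and $\cM$ is locally finitely generated, by the usual sheaf-theoretic criterion for coherence it suffices to show that the kernel of every $\cA_T$-linear map $\phi\colon\cA_T^k\to\cM$ is locally finitely generated. Reducing modulo $\hbar$ yields $\bar\phi\colon\cO_T^k\to\cM/\hbar\cM$; since $\cM/\hbar\cM$ is $\cO_T$-coherent by hypothesis and $\cO_T$ is itself a coherent sheaf of rings, $\Ker\bar\phi$ is locally finitely generated. Choose a local generating set $\bar s_1,\ldots,\bar s_\ell$ of $\Ker\bar\phi$ and lift it to sections $s_1,\ldots,s_\ell\in\cA_T^k$, so that $\phi(s_i)\in\hbar\cM$ for every $i$.

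The main step is then to show that $s_1,\ldots,s_\ell$ generate $\Ker\phi$, via an iterative $\hbar$-adic approximation. Given $z\in\Ker\phi$, one constructs inductively coefficients $a_i^{(n)}\in\cA_T$ satisfying $a_i^{(n+1)}-a_i^{(n)}\in\hbar^{n+1}\cA_T$ such that the residual $r_n:=z-\sum_i a_i^{(n)}s_i$ lies in $\hbar^{n+1}\cA_T^k$. At each step, one examines the class of $\hbar^{-n}r_{n-1}$ in $\cO_T^k$; it must lift, through the coherent $\cO_T$-module $\hbar^n\cM/\hbar^{n+1}\cM$, to a relation against the $\bar s_i$'s, which provides the next increment of the coefficients. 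The $\hbar$-adic completeness of $\cA_T$ ensures that the Cauchy sequences $(a_i^{(n)})_n$ converge to elements $a_i\in\cA_T$, and since $\cA_T^k$ is $\hbar$-adically separated one obtains $z=\sum_i a_is_i$.

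The main obstacle is the subtle interplay between the coefficients in $\cA_T$ and the images $\phi(s_i)\in\hbar\cM$: because $\phi(s_i)$ need not vanish and $\cM$ itself need not be $\hbar$-separated, the residuals carry both a $\cA_T$-valued component in $\hbar^{n+1}\cA_T^k$ and an $\cM$-valued error term that must be tracked separately. The coherence of every graded piece $\hbar^n\cM/\hbar^{n+1}\cM$ over $\cO_T$, applied at each level, is precisely what makes the successive approximation work: at each stage the error's image in the appropriate graded piece lies in a finitely generated $\cO_T$-submodule generated by the $\bar s_i$'s, and can be absorbed by adjusting the next-order coefficients.
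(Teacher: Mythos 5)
The paper offers no proof of this statement to compare against: it is quoted from Kashiwara--Schapira, \emph{Deformation quantization modules}, Theorem 1.3.6. Judging your argument on its own terms: the necessity direction is correct and routine, but the sufficiency direction has a genuine gap at its main step. You claim that lifts $s_1,\dots,s_\ell\in\cA_T^k$ of generators of $\Ker\bar\phi\subset\cO_T^k$ generate $\Ker\phi$. This fails in the presence of $\hbar$-torsion, which the theorem explicitly allows (this is precisely why the hypothesis is imposed for \emph{every} $n$: the graded pieces are genuinely different modules when $\cM$ has torsion). Concretely, take $\cM=\cA_T/\hbar\cA_T$ and $\phi\colon\cA_T\to\cM$ the projection: every $\hbar^n\cM/\hbar^{n+1}\cM$ is coherent, $\Ker\bar\phi=0$ so $\ell=0$, yet $\Ker\phi=\hbar\cA_T\neq 0$. (Note also that your $s_i$ are not elements of $\Ker\phi$ to begin with, since you only arrange $\phi(s_i)\in\hbar\cM$.) The successive approximation cannot repair this: at the first step $r_0=z-\sum_i a_i^{(0)}s_i=\hbar w$ with $\hbar\phi(w)=-\sum_i a_i^{(0)}\phi(s_i)\in\hbar\cM$, and when $\cM$ has $\hbar$-torsion this gives no control on the class of $\phi(w)$ in $\cM/\hbar\cM$; the class of $w$ in $\cO_T^k$ need not lie in $\Ker\bar\phi$ and so cannot be absorbed into the $\bar s_i$'s.

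A correct proof needs two ingredients absent from your sketch. First, for each $n$ one must consider the kernel $\cK_n$ of $\cO_T^k\to\hbar^n\cM/\hbar^{n+1}\cM$, $\bar u\mapsto \hbar^n\phi(\tilde u)\bmod\hbar^{n+1}\cM$; these form an increasing sequence of coherent $\cO_T$-submodules of $\cO_T^k$, and generators of each level contribute new generators of $\Ker\phi$ of the form $\hbar^n\tilde u+(\text{higher order corrections})$ --- in the counterexample above the missing generator $\hbar\cdot 1$ arises from $\cK_1=\cO_T$. Second, to obtain a \emph{finite} generating set one must invoke the local stationarity of increasing sequences of coherent submodules of a coherent $\cO_T$-module (the Noetherian-type property of $\cO_X$-modules); without this stabilization the construction produces infinitely many generators and proves nothing. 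Only once these are in place does an $\hbar$-adic approximation, using the completeness of $\cA_T$, close the argument.
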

\begin{prop}\label{prop:finitness}

\noindent (i) Let $\cM$ be a $\rho$-finite $\cA_{X}$-module. Then $\cM$ is a coherent $\cA_{X}$-module if and only if $\rho_\ast \cM$ is a coherent $\cO_{M}^\hbar$-module.

\medskip
\noindent (ii) Let $\cM$ be a $\rho$-finite $\cA^{loc}_{X}$-module.

 (a) If $\cM$ is a coherent $\cA^{loc}_{X}$-module, then $\rho_{\ast} \cM$ is a coherent $\cO_{M}^{\hbar,loc}$-module.

 (b) If $\rho_{\ast} \cM$ is a coherent $\cO_{M}^{\hbar,loc}$-module and has a coherent $\cO_{M}^\hbar$-lattice $\cL$ such that $\cL$ is a $\cB_{M}$-module, then $\cM$ is a coherent $\cA_{X}^{loc}$-module.
\end{prop}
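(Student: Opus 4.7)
The strategy is to leverage the equivalence of Proposition \ref{prop:eqdq} together with the commutative coherence criterion of Proposition \ref{prop:cohcrit}, and to conclude via the $\hbar$-adic criterion of Theorem \ref{thm:critcohabelienne}. The recurring technical input is that when $\rho|_{\Supp\cF}$ is finite, the functor $\rho_\ast$ is both exact and conservative on $\cF$; this is what will let us transport statements between $X$ and $M$ along the $\hbar$-adic filtration.

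For part (i), one direction is immediate from Proposition \ref{prop:rhoast}. Conversely, assume $\rho_\ast\cM$ is $\cO_M^\hbar$-coherent; through the embedding $\psi : \rho^{-1}\cB_M \hookrightarrow \cA_X$, it is naturally an object of $\Mod_{\cO_M^\hbar-\coh}(\cB_M)$. I first show that $\cM$ is locally finitely generated by studying the counit $\varepsilon : \rho^\flat\rho_\ast\cM \to \cM$: its source is coherent and $\rho$-finite by Proposition \ref{prop:rhosharp}, hence the cokernel $\cQ$ is $\rho$-finite. The triangle identity makes $\rho_\ast\varepsilon$ split epi, so $\rho_\ast\cQ = 0$ after applying the exact functor $\rho_\ast$; $\rho$-finite conservativity gives $\cQ = 0$. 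To upgrade to coherence via Theorem \ref{thm:critcohabelienne}, I use that exactness of $\rho_\ast$ on $\rho$-finite sheaves gives the identification
\begin{equation*}
\rho_\ast\bigl(\hbar^n\cM/\hbar^{n+1}\cM\bigr) \simeq \hbar^n\rho_\ast\cM / \hbar^{n+1}\rho_\ast\cM,
\end{equation*}
which is $\cO_M$-coherent, and then Proposition \ref{prop:cohcrit} promotes $\hbar^n\cM/\hbar^{n+1}\cM$ to an $\cO_X$-coherent sheaf for every $n$.

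Part (ii)(a) follows by localization: $\cM$ admits locally a coherent $\cA_X$-lattice $\cL'$ (Lemma \ref{lem:lattice}), which is $\rho$-finite since it has the same support as $\cM$; part (i) makes $\rho_\ast\cL'$ coherent over $\cO_M^\hbar$, and exactness of $\rho_\ast$ on $\rho$-finite sheaves yields $\rho_\ast\cM \simeq (\rho_\ast\cL')^{loc}$, coherent over $\cO_M^{\hbar,loc}$. For part (ii)(b), the hypothesis places $\rho_\ast\cM$ in $\Mod_{\fgd}(\cB_M^{loc})$ with finiteness lattice $\cL$. Setting $\cN = \rho^\flat\cL$, Proposition \ref{prop:rhosharp} makes $\cN$ coherent over $\cA_X$ and $\rho$-finite, so $\rho^\natural\rho_\ast\cM \simeq \cN^{loc}$ is coherent over $\cA_X^{loc}$ and $\rho$-finite. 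The counit $\varepsilon : \rho^\natural\rho_\ast\cM \to \cM$ has both kernel $\cK$ and cokernel $\cQ$ that are $\rho$-finite. Invoking the equivalence of categories in the localized setting (the analogue of \cite[Proposition 4.4]{KV} stated just before Proposition \ref{prop:finitness}) applied to $\rho_\ast\cM$, the morphism $\rho_\ast\varepsilon$ is an isomorphism; applying the exact functor $\rho_\ast$ to the four-term exact sequence $0 \to \cK \to \rho^\natural\rho_\ast\cM \to \cM \to \cQ \to 0$ then gives $\rho_\ast\cK = \rho_\ast\cQ = 0$, and $\rho$-finite conservativity forces $\cK = \cQ = 0$. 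Hence $\cM \simeq \rho^\natural\rho_\ast\cM$ is $\cA_X^{loc}$-coherent.

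The main technical point, and the reason the proof splits neatly into four short arguments, is the interplay between the $\hbar$-adic filtration and the direct image, which is precisely controlled by the exactness of $\rho_\ast$ on $\rho$-finite sheaves. The delicate hypothesis in (ii)(b) is that the lattice $\cL$ carries a $\cB_M$-structure, not merely an $\cO_M^\hbar$-structure: this is exactly what makes $\rho^\flat\cL$ well-defined and allows the concrete identification of $\rho^\natural\rho_\ast\cM$ as the localization of a coherent $\cA_X$-module, against which the counit can be compared.
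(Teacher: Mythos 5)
Your proof is correct, and for parts (i) and (ii)(a) it follows essentially the paper's own route: the counit of $(\rho^\flat,\rho_\ast)$ is shown to be an epimorphism via exactness and conservativity of $\rho_\ast$ on $\rho$-finite sheaves, the $\hbar$-adic graded pieces are transported to $M$ and handled by Proposition \ref{prop:cohcrit}, and Theorem \ref{thm:critcohabelienne} concludes; for (ii)(a) the paper simply declares the statement clear, and your lattice argument is a reasonable way to spell it out. The one place you genuinely diverge is (ii)(b). The paper does not prove that the counit $\rho^\natural\rho_\ast\cM\to\cM$ is an isomorphism outright; it only establishes that it is an epimorphism, composes with $\rho^\flat\cL\to\rho^\natural\rho_\ast\cM$ to get $\theta:\rho^\flat\cL\to\cM$, and then studies $\im\theta$: this is a $\rho$-finite $\cA_X$-module whose direct image is a locally finitely generated $\cO_M^\hbar$-submodule of the coherent $\cO_M^{\hbar,loc}$-module $\rho_\ast\cM$, hence coherent by \cite[Lemma 2.3.13]{KS3}, so that part (i) shows $\im\theta$ is a coherent $\cA_X$-lattice of $\cM$. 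You instead kill both the kernel and the cokernel of the counit at once, using that the unit $\eta_{\rho_\ast\cM}$ is an isomorphism because $\rho_\ast\cM$ lies in $\Mod_{\fgd}(\cB_M^{loc})$ (your hypothesis on $\cL$ makes it a finiteness lattice), whence $\rho_\ast\varepsilon$ is an isomorphism and exactness plus conservativity of $\rho_\ast$ on $\rho$-finite sheaves finish the job. Your version leans on the full localized equivalence of categories (in particular on the unit being an isomorphism on $\Mod_{\fgd}(\cB_M^{loc})$), which is available at this point in the paper, and in exchange avoids the detour through $\im\theta$ and \cite[Lemma 2.3.13]{KS3}; the paper's version uses only the epimorphism property of the counit and reduces everything to part (i). Both are valid; you correctly identified that the $\cB_M$-structure on $\cL$ is the hypothesis that makes $\rho^\flat\cL$, and hence the comparison with $\cM$, possible.
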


\begin{proof}
\noindent (i) The first direction is Proposition \ref{prop:rhoast}. Let us prove the converse.
Let $\cM$ be a $\rho$-finite module and assume that $\rho_\ast \cM$ is coherent. We denote by $\varepsilon:\rho^\flat \rho_\ast \to \id$ the co-unit and by $\eta:\id \to \rho_\ast \rho^\flat$ the unit of the adjunction $(\rho^\flat, \rho_\ast)$. Recall that the natural transformation
\begin{equation*}
\rho_\ast \stackrel{\eta \rho_\ast }{\to} \rho_\ast \rho^\flat \rho_\ast \stackrel{\rho_\ast \varepsilon }{\to} \rho^\ast
\end{equation*}
is the identity. Thus, $\rho_\ast \varepsilon$ is an epimorphism of sheaves. By an argument similar to the one of the proof of Proposition \ref{prop:cohcrit}, we show that $\varepsilon_\cM$ is an epimorphism of sheaves which implies that $\cM$ is locally finitely generated.

The restriction of $\rho_\ast$ to the category $\Mod^{\rho-\fin }(\cA_X)$ is exact and the sheaves $\cM$, $\hbar^n \cM$ and $\hbar^n  \cM / \hbar^{n+1} \cM$ are $\rho$-finite. This implies that
\begin{equation*}
\hbar^n \rho_\ast \cM / \hbar^{n+1} \rho_\ast \cM \simeq  \rho_\ast(\hbar^n \cM) /  \rho_\ast(\hbar^{n+1} \cM)\simeq \rho_\ast(\hbar^n \cM / \hbar^{n+1} \cM ).
\end{equation*}
This means that the sheaf  $\rho_\ast(\hbar^n \cM / \hbar^{n+1} \cM )$ is a coherent $\cO_M^\hbar$-module and hence, a coherent $\cO_M$-module since $\torh(\rho_\ast(\hbar^n \cM / \hbar^{n+1} \cM ))=1$. By Proposition \ref{prop:cohcrit}, $\hbar^n \cM / \hbar^{n+1} \cM$ is a coherent $\cO_X$-module. It follows from Theorem \ref{thm:critcohabelienne} that $\cM$ is a coherent $\cA_X$-module.\\

\noindent (ii) The point (a) is clear. Let us prove (b). Let $\cM$ be a $\rho$-finite module such that $\rho_{\ast} \cM$ is a coherent $\cO_{M}^{\hbar,loc}$-module and has a coherent $\cO_{M}^\hbar$-lattice $\cL$ such that $\cL$ is a $\cB_{M}$-module. This implies that $\rho^\flat \cL \in \Mod_{\coh}^{\rho-\fin}(\cA_X)$. 

By an argument similar to the one of the proof of Proposition \ref{prop:cohcrit}, we obtain that the co-unit
\begin{equation}\label{map:counitloc}
\rho^\natural \rho_\ast \cM \to \cM
\end{equation}
is an epimorphism. Moreover, there is a canonical map
\begin{equation}\label{map:locmap}
\rho^\flat \cL \to \rho^\natural \rho_\ast \cM. 
\end{equation}
Composing the maps \eqref{map:counitloc}  and \eqref{map:locmap}, we get
\begin{equation}\label{map:imfin}
\theta:\rho^\flat \cL \to \rho^\natural \rho_\ast \cM \to \cM.
\end{equation}
Moreover, $\theta^{loc}:(\rho^\flat \cL)^{loc} \to \cM^{loc}\simeq \cM$ is an epimorphism. This implies that $(\im\theta)^{loc} \simeq \cM$. The $\cA_X$-module $\im \theta$ is $\rho$-finite and the morphism \eqref{map:imfin} induces the epimorphism $\rho^\flat \cL \twoheadrightarrow \im \theta$. Thus, $\rho_\ast \im\theta$ is a locally finitely generated $\cO_M^\hbar$-submodule of the coherent $\cO_M^{\hbar,loc}$-module $\rho_\ast \cM$. It follows by \cite[Lemma 2.3.13]{KS3} that  $\rho_\ast \im\theta$ is a coherent $\cO_M^\hbar$-module. Then point (i) implies that $\im \theta$ is a coherent $\cA_X$-module. Hence, $\cM$ is a coherent $\cA_X^{loc}$-module.
\end{proof}

\begin{Rem}
If $U$ is an open subset of $X$, the above criterion can also be used to prove the coherence of a  $\rho|_U$-finite $\cA_X|_U$-module $\cM$. Indeed, if $i: U \hookrightarrow \rho(U) \times \C^n$ denotes the inclusion of $U$ into $ \rho(U) \times \C^n$ then, by Corollary \ref{cor:finitesheaf} $i_\ast \cM$ is a $\rho|_{\rho(U) \times \C^n}$-finite $\cA_{\rho(U) \times \C^n}$-module the coherence of which is equivalent to the coherence of $\cM$.
\end{Rem}

\section{A duality result}\label{sec:duality}

In this section, we prove the analogue for DQ-modules of \cite[Propositions 4.2 \& 4.6 ]{KV}. We let $X$ be an open subset of $T^\ast M$ and shrinking $M$ is necessary we assume that $\rho(X)=M$. We denote by $\cA_X$ the Moyal-Weil star-algebra on X and, on $M$, we consider the trivial star-algebra $\cA_M:=\cO_{M}^\hbar$. We endow $M \times X$ with the star-algebra $\cA_{M \times {X^a}}=\cO^\hbar_{M} \ubtimes \cA_{{X^a}}$. Following page 61 of \cite{KS3} the star-product on $\cA_{M \times {X^a}}$ is given by
\begin{equation*}
f \star g = \sum_{i \geq 0} \hbar^i (m \boxtimes P_i) (f,g)  \quad \textnormal{for every } f, \; g \in \cO_{M \times X} 
\end{equation*}
where the $P_i$ are the bi-differential operators defining the star-product on $\cA_{{X^a}}$ and $m$ is the multiplication of function on $\cO_{M}^\hbar$.

Consider the projection $\rho: X \to M$ and define 
\begin{align*}
\begin{aligned}
\gamma_\rho \colon & X \times X \to  M \times X \\
                     & (x,s) \mapsto (\rho(x);s)
\end{aligned}
&&
\begin{aligned}
\delta_{\rho} \colon & X \to  M \times X\\
                     & x \mapsto (\rho(x);x).
\end{aligned}
\end{align*}
Notice that $\delta_\rho=\gamma_\rho \circ \delta$. We define the morphism
\begin{align*}
\gamma_{\rho}^\dagger:& \gamma_{\rho}^{-1} \cA_{M \times {X^a}} \to \cA_{{X \times X^a}}\\
                            & \hspace{1.5cm} f \mapsto f \circ \gamma_{\rho}. 
\end{align*}
It follows from the formula defining the star-product on $\cA_{M \times {X^a}}$ and $\cA_{X\times X^a}$ that $\gamma_{\rho}^\dagger$ is a morphism of $\C^\hbar$-algebras. 

The $\cA_X \te \cA_{X^a}$-module $\cA_X$ is bi-invertible. Thus, by \cite[Lemma 2.4.1]{KS3} $\delta_\ast \cA_X$ has a structure of $\cA_{X\times X^a}$-modules. By adjunction, $\gamma_\rho^\dagger$ induces a morphism of algebra $\cA_{M \times X^a} \to \gamma_{\rho\ast}\cA_{X \times X^a}$ which, in turn, induces on $\gamma_{\rho \ast} \delta_\ast \cA_X\simeq\delta_{\rho \ast}\cA_X$ a structure of $\cA_{M \times X^a}$-module. Since $\delta_{\rho}$ is finite, $ \delta_{\rho \ast} \cA_{X}$ is a coherent $\cA_{M \times {X^a}}$-module.

We denote by  $\Der^b_{\rho-\fin}(\cA_{X})$ the full subcategory of  $\Der^b(\cA_{X})$ the objects of which have $\rho$-finite cohomology and define the functor
\begin{align*}
\rho^\ast:& \Mod(\cA_{M^a}) \to \Mod(\cA_{X^a})\\
                 & \hspace{0.5cm}\cN \mapsto \rho^{-1}\cN \te_{\rho^{-1}\cA_M} \cA_X
\end{align*}

\begin{lemme}
Let $\cM \in  \Der^b_{\rho-\fin}(\cA_{X})$ and $\cN \in \Der^b(\cA_{{M}^a})$. Then
 \begin{align*}
 \delta_{\rho \ast}\cA_{{X}} \conv[\cA_ {X}] \cM \simeq \dR \rho_\ast \cM,\\
  \cN \conv[\cA_ {M}] \delta_{\rho \ast}\cA_{{X}}\simeq {\dL \rho}^{\ast} \cN.
 \end{align*}
\end{lemme}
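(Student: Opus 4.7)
The plan is to unwind the defining formula of the convolution as an external product followed by restriction to a diagonal and a pushforward, and then to exploit two facts: $\delta_\rho$ is a closed embedding factoring as $\delta_\rho = \gamma_\rho \circ \delta$, and $\cC_X$ (resp.\ $\cC_M$) acts as the identity kernel for convolution over $\cA_X$ (resp.\ $\cA_M$), i.e.\ $\cC_X \uLte_{\cA_X} \cM \simeq \cM$.

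For the first isomorphism, I begin by noting that $\delta_\rho$ is proper (it is a closed embedding), so the projection $p_{13}$ on $M \times X \times \pt$ is proper on $\Supp(\delta_{\rho \ast}\cA_X) \times_X \Supp(\cM)$, and $\reim{p_{13}} \simeq \roim{p_{13}}$ in the definition of the convolution. Writing $q\colon M \times X \to M$ for this projection, the composition reduces to
\[
\delta_{\rho \ast}\cA_X \conv[\cA_X] \cM \simeq \dR q_\ast \bl (\delta_{\rho \ast}\cA_X \ldetens \cM) \Lte_{\cA_{X \times X^a}} \cC_X \br .
\]
The main computation is to identify the bracketed expression with $\delta_{\rho \ast}\cM$. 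Writing $\delta_{\rho \ast}\cA_X \simeq \gamma_{\rho \ast}\delta_\ast\cA_X = \gamma_{\rho \ast}\cC_X$ and using that $\gamma_\rho$ only affects the $M$-factor, a projection-formula style manipulation reduces this to $\gamma_{\rho \ast}(\cC_X \uLte_{\cA_X}\cM)$ on $M \times X$, which by the identity-kernel property of $\cC_X$ is $\gamma_{\rho \ast}\delta_\ast\cM = \delta_{\rho \ast}\cM$. The isomorphism then follows by applying $\dR q_\ast$ and using $q \circ \delta_\rho = \rho$.

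For the second isomorphism, the argument is symmetric. The relevant composition takes $X_1 = \pt$, $X_2 = M$, $X_3 = X$, so
\[
\cN \conv[\cA_M] \delta_{\rho \ast}\cA_X \simeq \dR p_{13\,\ast}\bl (\cN \ldetens \delta_{\rho \ast}\cA_X) \Lte_{\cA_{M \times M^a}} \cC_M \br,
\]
where $p_{13} \colon M \times X \to X$ is proper on the relevant support, again because $\delta_\rho$ is a closed embedding. Restriction via $\cC_M$ collapses the two $M$-factors to the diagonal, producing a module on $M \times X$ supported on the graph of $\rho$; identifying this graph with $X$ via $\delta_\rho$ and pushing forward along $p_{13}$ yields $\rho^{-1}\cN \te_{\rho^{-1}\cA_M} \cA_X = \dL\rho^\ast \cN$. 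Here flatness of $\cA_X$ over $\rho^{-1}\cA_M$ (since $\rho$ is smooth) ensures the derived and underived tensor products agree.

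The main obstacle is the bookkeeping of left/right $\cA$-module structures across the various products and pushforwards. In particular, one must check that the $\cA_{M \times X^a}$-action on $\delta_{\rho \ast}\cA_X$ induced by $\gamma_\rho^\dagger$ and the diagonal structure of $\delta_\ast\cA_X$ match the module structures used in forming $\ldetens$ and $\Lte_{\cA_{X \times X^a}}\cC_X$ (respectively $\Lte_{\cA_{M \times M^a}}\cC_M$) above, so that the identifications $\delta_{\rho \ast}\cA_X \simeq \gamma_{\rho \ast}\cC_X$ and the projection-formula reductions are isomorphisms of $\cA$-modules, not merely of sheaves.
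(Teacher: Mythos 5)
Your overall route is the same as the paper's (the paper simply says the formulas are direct consequences of the projection formula), and your unwinding of the convolution via $\delta_{\rho\ast}\cA_X \simeq \gamma_{\rho\ast}\cC_X$ and the identity-kernel property of $\cC_X$, $\cC_M$ is the right computation. The second isomorphism is fine as you argue it: there the fibre product $\Supp(\cN)\times_M \delta_\rho(X)$ projects onto the closed set $\rho^{-1}(\Supp\cN)\subset X$ by a closed embedding, so properness really does come for free from $\delta_\rho$ being a closed embedding.

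There is, however, a genuine flaw in your justification of the first isomorphism. You claim $p_{13}\colon M\times X\to M$ is proper on $\Supp(\delta_{\rho\ast}\cA_X)\times_X\Supp(\cM)$ ``because $\delta_\rho$ is a closed embedding.'' That set is $\{(\rho(x),x) : x\in\Supp\cM\}$, and $p_{13}$ restricted to it is, up to the obvious homeomorphism with $\Supp\cM$, just $\rho|_{\Supp\cM}$ — which is not proper in general (e.g.\ for $\cM=\cA_X$ with $\rho\colon T^\ast M\to M$). What makes it proper here is precisely the hypothesis $\cM\in\Der^b_{\rho-\fin}(\cA_X)$, i.e.\ that $\rho|_{\Supp\cM}$ is finite; your argument as written never uses this hypothesis. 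Without it, the closed-embedding argument only yields $\delta_{\rho\ast}\cA_X\conv[\cA_X]\cM\simeq \reim{\rho}\cM$, which differs from $\dR\rho_\ast\cM$ in general; the $\rho$-finiteness is exactly what lets you replace $\reim{p_{13}}$ by $\roim{p_{13}}$ on the object $\delta_{\rho\ast}\cM$ and conclude. The fix is one line, but it should be made, since it is the only place the hypothesis on $\cM$ enters.
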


\begin{proof}
These formulas are direct consequences of the projection formula.
\end{proof}

\begin{lemme}\label{lem:operation}
Let $\cM \in  \Der^b_{\coh,\rho-\fin}(\cA_{X^a})$ (resp. in $\Der^b_{\gd,\rho-\fin}(\cA^{loc}_{X^a}$)). Then we have the following isomorphisms
\begin{align}
(\w_{M^a} \conv[\cA_ {M^a}] \Du^\prime_{\cA_{M \times X^a}}( \delta_{\rho \ast}\cA_{{X}})) \conv[\cA_ {X^a}] \cM \buildrel\sim\over \to \rho_\ast \cM \label{operation} \quad  \textnormal{ in } \Der^b_{\coh}(\cA_{M^a}),\\
(\w_{M^a}^{loc} \conv[\cA^{loc}_{M^a}] \Du^\prime_{\cA^{loc}_{M \times X^a}}( \delta_{\rho \ast}\cA^{loc}_{{X}^a})) \conv[\cA^{loc}_{X^a}] \cM \buildrel\sim\over \to \rho_\ast \cM \label{operationbis}  \quad  \textnormal{ in } \Der_\gd^b(\cA^{loc}_{M^a}).
\end{align}
\end{lemme}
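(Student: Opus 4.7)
The plan is to identify the kernel
$$\cK := \w_{M^a} \conv[M^a] \Du'_{\cA_{M \times X^a}}(\delta_{\rho\ast}\cA_X) \in \Der^b(\cA_{M^a \times X})$$
with an explicit sheaf supported on the graph of $\delta_\rho$, and then to reduce $\cK \conv[X^a] \cM$ to $\rho_\ast \cM$ via the projection formula. As emphasised in the introduction, since duality for DQ-modules is phrased in terms of kernels rather than morphisms, each kernel appearing in the formula must be recognised as its underlying sheaf-theoretic operation.

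First I would compute $\Du'_{\cA_{M \times X^a}}(\delta_{\rho\ast}\cA_X)$ locally. The morphism $\delta_\rho:X \hookrightarrow M \times X$ is a regular closed immersion of codimension $n = d_M$; choosing local coordinates on $M$, one obtains a Koszul resolution of $\delta_{\rho\ast}\cA_X$ by free $\cA_{M \times X^a}$-modules. Computing $\fRHom$ against this resolution shows that $\fExt^k_{\cA_{M \times X^a}}(\delta_{\rho\ast}\cA_X, \cA_{M \times X^a})$ vanishes for $k \neq n$, and that the degree $n$ term is again supported on $\delta_\rho(X)$, viewed as an object of $\Mod(\cA_{M^a \times X})$.

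Next, the convolution with $\w_{M^a}$ over $M^a$ is analysed using the explicit structure of the dualizing complex $\w_{M^a}$ as an $\cA_{M^a \times M}$-module. Since the star-algebra on $M$ is trivial ($\cA_M = \cO_M^\hbar$, so left and right $\cA_M$-actions coincide), the computation reduces essentially to the classical analytic situation; the degree shift arising from the codimension of the graph is absorbed by the shift built into $\w_{M^a}$, and the resulting kernel $\cK$ is concentrated in degree zero and supported on $\delta_\rho(X)\subset M^a \times X$. Substituting this into
$$\cK \conv[X^a] \cM = Rp_{13!}\bl\cK \uLte_{\cA_{X^a}} \cM\br$$
with $p_{13}: M^a \times X^a \times \pt \to M^a$, invoking the projection formula, and using the properness of $\rho|_{\Supp\cM}$ coming from the $\rho$-finiteness of $\cM$, the right-hand side collapses to $R\rho_! \cM \simeq \rho_\ast \cM$, establishing \eqref{operation}. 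The second isomorphism \eqref{operationbis} follows by the same argument in the $\cA^{loc}$-setting, substituting Theorem \ref{thm:dualiteloc} and $\w^{loc}_{M^a}$ for Theorem \ref{thm:dualite} and $\w_{M^a}$.

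The main obstacle is the second step: keeping track of left versus right $\cA$-module structures through the dualization and the convolution with $\w_{M^a}$, and ensuring that the shifts introduced by the codimension of the graph and by the dualizing complex cancel correctly, is technically delicate. The triviality of the star-product on $M$ and the dimensional hypothesis substantially simplify the bookkeeping, but the precise identification of $\cK$ as a graph-supported sheaf performing $\rho_\ast$ on $\cA_{X^a}$-modules is the step that requires the most care.
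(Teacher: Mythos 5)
Your overall route---explicitly identifying the composed kernel $\cK=\w_{M^a}\conv[\cA_{M^a}]\Du^\prime_{\cA_{M\times X^a}}(\delta_{\rho\ast}\cA_X)$ as a graph-supported module and then applying the projection formula---is a legitimate alternative in principle, but as written it has a genuine gap: the step you defer as ``the step that requires the most care'' is precisely the content of the lemma. Concretely, you never establish that the degree-$n$ term of the Koszul computation, after convolution with $\w_{M^a}$, is the transposed graph module that implements $\rho_\ast$ on \emph{right} $\cA_X$-modules; everything else in your argument (the collapse of $\dR p_{13!}$ to $\dR\rho_!\cM\simeq\rho_\ast\cM$ via $\rho$-finiteness) is downstream of that unproven identification. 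Two specific points make the deferred step nontrivial. First, $M$ carries the trivial star-algebra $\cO_M^\hbar$ and is \emph{not} symplectic, so you cannot invoke the lemma $\w_X\simeq\cC_X[d_X]$: the dualizing complex $\w_{M^a}$ is a shift of an invertible bimodule twisted by a line bundle, not a bare shift, and this twist must be seen to cancel against the twist produced by the Koszul $\fExt^n$. Second, the passage from the left $\cA_{M\times X^a}$-structure on $\delta_{\rho\ast}\cA_X$ to the $\cA_{M^a\times X}$-structure on its dual, and the compatibility of that structure with the $\uLte_{\cA_{X^a}}$ in the convolution, is exactly the left/right bookkeeping you acknowledge but do not carry out.

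The paper avoids this computation entirely. It tests the two sides of \eqref{operation} against an arbitrary $\cN\in\Der^b_{\coh}(\cA_{M^a})$: a chain of projection-formula manipulations identifies $\RHom_{\cA_{M^a}}(\cN,(\w_{M^a}\conv[\cA_{M^a}]\Du^\prime(\delta_{\rho\ast}\cA_X))\conv[\cA_{X^a}]\cM)$ with $\Rg(X;[\Du^\prime(\cN)\conv[\cA_{M^a}]\w_{M^a}\conv[\cA_{M^a}]\Du^\prime(\delta_{\rho\ast}\cA_X)]\Lte_{\cA_{X^a}}\cM)$; Theorem \ref{thm:dualite} then converts the bracketed expression into $\Du^\prime_{\cA_{X^a}}(\cN\conv[\cA_{M^a}]\delta_{\rho\ast}\cA_X)\simeq\Du^\prime_{\cA_{X^a}}(\dL\rho^\ast\cN)$ by the preceding lemma, and the adjunction $(\dL\rho^\ast,\dR\rho_\ast)$ plus Yoneda concludes. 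If you want to keep your computational approach you must actually perform the Koszul and bimodule analysis; otherwise the formal argument via Theorem \ref{thm:dualite} is the efficient way to close the gap, since it is exactly the device that packages the kernel identification you are missing.
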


\begin{proof}
We  prove formula (\ref{operation}). Let $\cN \in \Der^b_{\coh}(\cA_{M^a})$. We have
\begin{align*}
 \RHom_{\cA_{M^a}}(\cN,(\w_{M^a} \conv[\cA_ {M^a}]\Du^\prime_{\cA_{M \times X^a}}( \delta_{\rho \ast}\cA_{{X}})) \conv[\cA_ {X^a}] \cM) \simeq& \\ 
& \hspace{-9cm} \simeq \Rg (M,\fRHom_{\cA_M}(\cN,(\w_{M^a} \conv[\cA_ {M^a}] \Du^\prime_{\cA_{M \times X^a}}( \delta_{\rho \ast}\cA_{{X}})) \conv[\cA_ {X^a}] \cM)).
\end{align*}
and\\
\scalebox{0.9}{\parbox{\linewidth}{
\begin{align*}
\fRHom_{\cA_{M^a}}(\cN,(\w_{M^a} \conv[\cA_ {M^a}] \Du^\prime_{\cA_{M \times X^a}}( \delta_{\rho \ast}\cA_{{X}})) \conv[\cA_ {X^a}] \cM) &\simeq& \\
& \hspace{-8.5cm} \simeq \Du^\prime_{\cA_{M^a}}(\cN) \Lte_{\cA_{M^a}} \lbrack (\w_{M^a} \conv[\cA_ {M^a}] \Du^\prime_{\cA_{M \times X^a}}( \delta_{\rho \ast}\cA_{{X}})) \conv[\cA_ {X^a}] \cM \rbrack\\
&\hspace{-8.5cm} \simeq \Du^\prime_{\cA_{M^a}}(\cN) \Lte_{\cA_{M^a}} \dR p_{M \ast}((\w_{M^a} \conv[\cA_ {M^a}] \Du^\prime_{\cA_{M \times X^a}}( \delta_{\rho \ast}\cA_{{X}})) \Lte_{p_{X^a}^{-1} \cA_{X^a}} p_{X^a}^{-1} \cM)\\
& \hspace{-8.5cm} \simeq  \dR p_{M \ast} (p_M ^{-1} \Du^\prime_{\cA_{M^a}}(\cN) \Lte_{ p_M ^{-1} \cA_{M^a}} (\w_{M^a} \conv[\cA_ {M^a}] \Du^\prime_{\cA_{M \times X^a}}( \delta_{\rho \ast}\cA_{{X}})) \Lte_{p_{X^a}^{-1} \cA_{X^a}} p_{X^a}^{-1} \cM)
\end{align*}
}}\\%
Applying  $\Rg(M; \cdot)$, we get\\
\scalebox{0.9}{\parbox{\linewidth}{
\begin{align*}
\Rg(M;\dR p_{M \ast} (p_M ^{-1} \Du^\prime_{\cA_{M^a}}(\cN) \Lte_{ p_M ^{-1} \cA_{M^a}} (\Du^\prime_{\cA_{M \times X^a}}( \w_{M^a} \conv[\cA_ {M^a}] \delta_{\rho \ast}\cA_{{X}})) \Lte_{p_{X^a}^{-1} \cA_{X^a}} p_{X^a}^{-1} \cM)) \simeq&\\
& \hspace{-14.2cm} \simeq \Rg(M \times X ; p_M ^{-1} \Du^\prime_{\cA_{M^a}}(\cN) \Lte_{ p_M ^{-1} \cA_{M^a}} (\w_{M^a} \conv[\cA_ {M^a}] \Du^\prime_{\cA_{M \times X^a}}( \delta_{\rho \ast}\cA_{{X}})) \Lte_{p_{X^a}^{-1} \cA_{X^a}} p_{X^a}^{-1} \cM)\\
& \hspace{-14.2cm} \simeq \Rg(X ; \dR p_{X \ast}( p_M ^{-1} \Du^\prime_{\cA_{M^a}}(\cN) \Lte_{ p_M ^{-1} \cA_{M^a}} (\w_{M^a} \conv[\cA_ {M^a}] \Du^\prime_{\cA_{M \times X^a}}( \delta_{\rho \ast}\cA_{{X}})) \Lte_{p_{X^a}^{-1} \cA_{X^a}} p_{X^a}^{-1} \cM))\\
& \hspace{-14.2cm} \simeq \Rg(X ; (\Du^\prime_{\cA_{M^a}}(\cN) \Lte_{ p_M ^{-1} \cA_{M^a}} (\w_{M^a} \conv[\cA_ {M^a}] \Du^\prime_{\cA_{M \times X^a}}( \delta_{\rho \ast}\cA_{{X}})) \Lte_{p_{X^a}^{-1} \cA_{X^a}} p_{X^a}^{-1} \cM))\\
& \hspace{-14.2cm} \simeq \Rg(X ; \lbrack \Du^\prime_{\cA_{M^a}}(\cN)) \conv[\cA_{M^a}] \w_{M^a} \conv[\cA_ {M^a}] \Du^\prime_{\cA_{M \times X^a}}( \delta_{\rho \ast}\cA_{{X}}) \rbrack \Lte_{ \cA_{X^a}}  \cM)).
\end{align*}
}}%

\noindent Applying Theorem \ref{thm:dualite}, we obtain
\begin{align*}
\Rg(X ; \lbrack \Du^\prime_{\cA_{M^a}}(\cN)) \conv[\cA_{M^a}] \w_{M^a}  \conv[\cA_ {M^a}] \Du^\prime_{\cA_{M \times X^a}}( \delta_{\rho \ast}\cA_{{X}}) \rbrack \Lte_{ \cA_{X^a}}  \cM)) \simeq &\\
& \hspace{-8cm} \simeq \Rg(X ; \Du^\prime_{\cA_{X^a}}( \cN \conv[\cA_{M^a}] \delta_{\rho \ast}\cA_{{X}}))  \Lte_{ \cA_{X^a}}  \cM)) \\
& \hspace{-8cm} \simeq \RHom_{\cA_{X^a}}(\dL \rho^\ast \cN, \cM)\\
& \hspace{-8cm} \simeq \RHom_{\cA_{M^a}}(\cN, \dR\rho_\ast \cM)
\end{align*}
It follows from the Yoneda Lemma that
\begin{equation*}
(\w_{M^a} \conv[\cA_ {M^a}] \Du^\prime_{\cA_{M \times X^a}}( \delta_{\rho \ast}\cA_{{X}})) \conv[\cA_ {X^a}] \cM \simeq \dR\rho_\ast \cM.
\end{equation*}

The construction of the morphism \eqref{operationbis} is similar. First, one assumes that $\cN \in \Der^b_{\gd}(\cA_X^{loc})$ and instead of using Theorem \ref{thm:dualite}, one uses Theorem \ref{thm:dualiteloc} and get the desired isomorphism.
\end{proof}

Finally, we get the following duality result which is the analogue for DQ-modules of \cite[Proposition 4.2]{KV}.

\begin{prop}\label{prop:dual}
Let $\cM \in \Mod_{\coh}^{\rho-\fin}(\cA_{X})$ (resp. in $\Mod_{\gd}^{\rho-\fin}(\cA_{X}^{loc}))$. Then we have
\begin{align}
\rho_\ast \fExt^k_{\cA_{X}}(\cM, \cA_{X}) \simeq \fExt^{k-n}_{\cO_{M}^{\hbar}}(\rho_\ast\cM, \cO_{M}^{\hbar}), \label{for:pasloc}\\
\rho_\ast \fExt^k_{\cA_{X}^{loc}}(\cM, \cA_{X}^{loc}) \simeq \fExt^{k-n}_{\cO_{M}^{\hbar, loc}}(\rho_\ast\cM, \cO_{M}^{\hbar,loc}) \label{for:loc}.
\end{align}
where $n=d_X/2=d_M$.
\end{prop}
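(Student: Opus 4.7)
The plan is to realise both $\rho_\ast\cM$ and $\rho_\ast\Du^\prime_{\cA_X}(\cM)$ as kernel compositions, then use Theorem \ref{thm:dualite} to interchange $\Du^\prime$ and $\rho_\ast$. Starting with $\cM\in\Mod_{\coh}^{\rho-\fin}(\cA_X)$, the first identity of the lemma preceding Lemma \ref{lem:operation} gives $\rho_\ast\cM\simeq\delta_{\rho\ast}\cA_X\conv[X]\cM$, and applying $\Du^\prime_{\cO_M^\hbar}$ together with Theorem \ref{thm:dualite} for $X_1=M$, $X_2=X$, $X_3=\pt$, $\cK_1=\delta_{\rho\ast}\cA_X$, $\cK_2=\cM$ yields
\begin{equation*}
\Du^\prime_{\cO_M^\hbar}(\rho_\ast\cM)\simeq\Du^\prime_{\cA_{M\times X^a}}(\delta_{\rho\ast}\cA_X)\conv[X^a]\w_{X^a}\conv[X^a]\Du^\prime_{\cA_X}(\cM).
\end{equation*}
In parallel, formula \eqref{operation} of Lemma \ref{lem:operation} applied to the right $\cA_X$-module $\Du^\prime_{\cA_X}(\cM)\in\Der^b(\cA_{X^a})$ provides
\begin{equation*}
\rho_\ast\Du^\prime_{\cA_X}(\cM)\simeq\w_{M^a}\conv[M^a]\Du^\prime_{\cA_{M\times X^a}}(\delta_{\rho\ast}\cA_X)\conv[X^a]\Du^\prime_{\cA_X}(\cM).
\end{equation*}

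The two displays differ only by the position of a dualising kernel. Using that $\w_{X^a}\simeq\cC_{X^a}[d_X]=\cC_{X^a}[2n]$ by the recalled lemma (valid since $X$ is symplectic), and the analogous identification $\w_{M^a}\simeq\cC_{M^a}[n]$ for the trivial star-algebra $\cO_M^\hbar$ on the $n$-dimensional manifold $M$, and since $\cC_{X^a}$ (resp.~$\cC_{M^a}$) is the unit for $\conv[X^a]$ (resp.~$\conv[M^a]$), these insertions act as cohomological shifts by $[2n]$ and $[n]$ respectively. Combining the two formulas one therefore obtains
\begin{equation*}
\rho_\ast\Du^\prime_{\cA_X}(\cM)\simeq\Du^\prime_{\cO_M^\hbar}(\rho_\ast\cM)[-n],
\end{equation*}
and extraction of the $k$-th cohomology sheaf gives exactly \eqref{for:pasloc}. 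The localised formula \eqref{for:loc} follows by the same chain of isomorphisms upon substituting Theorem \ref{thm:dualiteloc} and \eqref{operationbis} for Theorem \ref{thm:dualite} and \eqref{operation}, and noting that $\rho$-finite good $\cA_X^{loc}$-modules are holonomic (Remark \ref{lem:finhol}), so by Lemma \ref{lem:holref} their duals remain good and the hypotheses of Theorem \ref{thm:dualiteloc} are met.

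The main obstacle will be the careful bookkeeping of module structures and degree shifts. One must track the left and right $\cA_X$-structures on $\Du^\prime_{\cA_X}(\cM)$, and the mixed $\cA_{M\times X^a}$-structure on $\delta_{\rho\ast}\cA_X$, to apply Theorem \ref{thm:dualite} unambiguously, and verify that the properness condition on $Z_1\times_X Z_2$ over $M\times\pt$ coincides with the $\rho$-finiteness of $\supp\cM$. A second delicate point is the identification $\w_{M^a}\simeq\cC_{M^a}[n]$ for the commutative DQ-algebra $\cO_M^\hbar$, which is not covered by the symplectic statement cited in the paper; this should reduce to the standard fact that $\cO_M[n]$ (equivalently $\Omega_M^n[n]$ after a local trivialisation $dz_1\wedge\cdots\wedge dz_n$) is the dualising complex for $\cO_M$-modules on a smooth complex manifold of dimension $n$, but requires unwinding the definition of $\w$ in the DQ setting.
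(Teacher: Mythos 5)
Your argument is correct and is essentially the proof in the paper: the paper likewise writes $\rho_\ast\cM\simeq\delta_{\rho\ast}\cA_X\conv[X]\cM$, applies Theorem \ref{thm:dualite} with $X_1=M$, $X_2=X$, $X_3=\pt$, converts $\conv\w_{X^a}\conv$ into the shift $[d_X]$ and (after inserting $\w_{M^a}^{-1}\conv\w_{M^a}$) uses Lemma \ref{lem:operation} on $\Du^\prime_X(\cM)$ to land on $\rho_\ast\Du^\prime_X(\cM)[d_X-d_M]$, which is exactly your comparison of the two kernel formulas. Your closing remarks on bookkeeping and on $\w_{M^a}\simeq\cC_{M^a}[d_M]$ for the trivial star-algebra are apt — the paper uses the latter identification silently in the step absorbing $\w_{M^a}^{-1}$ as $[-d_M]$.
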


\begin{proof}
We only prove formula \eqref{for:pasloc}, the proof of formula \eqref{for:loc} being similar. Applying Theorem \ref{thm:dualite} with $X_1=M$, $X_2= X$ and $X_3= \pt$, $\cK_1= \delta_{\rho \ast}\cA_{{X}}$ and $\cK_2=\cM \in \Der_{\coh,\rho-\fin}(\cA_{X})$ and using lemma \ref{lem:operation}, we get that
\begin{align*}
\Du^\prime_{M}(\rho_{\ast} \cM)&\simeq \Du^\prime_{M}(\delta_{\rho \ast} \cA_{X} \conv[X] \cM)\\ 
& \simeq \Du^\prime_{{M \times {X}^a}}(\delta_{\rho \ast} \cA_{X}) \conv[X^{a}] \omega_{X^{a}} \conv[X^{a}] \Du^\prime_{X}(\cM)\\
& \simeq \w_{M^a}^{-1} \conv[\cA_{M^a}] \w_{M^{a}} \conv[\cA_{M^a}] \Du^\prime_{{M \times {X}^a}}(\delta_{\rho \ast} \cA_{X}) \conv[X^{a}] \Du^\prime_{X}(\cM)[d_X]\\
& \simeq \w_{M^{a}} \conv[\cA_{M^a}] \Du^\prime_{{M \times {X}^a}}(\delta_{\rho \ast} \cA_{X}) \conv[X^{a}] \Du^\prime_{X}(\cM)[d_X-d_M]\\
& \simeq \rho_\ast \Du^\prime_{X}(\cM)[d_X-d_M].
\end{align*}

Taking the cohomology and using the fact that $\rho: \Supp(\cM) \to M$ is finite, we get that
\begin{equation*}
\rho_\ast \fExt^k_{\cA_{X}}(\cM, \cA_{X})) \simeq \fExt^{k-n}_{\cO_{M}^{\hbar}}(\rho_\ast\cM, \cO_{M}^{\hbar}).
\end{equation*}%
\end{proof}

We will need the following result (see \cite{BR}, \cite{Pop}).
\begin{thm}[Popescu, Bhatwadekar, Rao] \label{thm:free}
Let $R$ be a regular local ring, containing a field, with maximal ideal $\mathfrak{m}$ and $t \in \mathfrak{m} \setminus \mathfrak{m}^2$. Then every finitely generated projective module over the localized ring $R_t=R[t^{-1}]$ is free.
\end{thm}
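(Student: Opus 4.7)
The statement is recalled from the works \cite{BR, Pop} of Bhatwadekar--Rao and Popescu, so the proof I propose follows the classical strategy underlying those references rather than providing an original argument. The plan rests on two pillars: Popescu's desingularization theorem, and a Quillen--Suslin type freeness result for projective modules over localizations of smooth algebras.

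First I would invoke Popescu's desingularization theorem to express the regular local ring $R$, which by hypothesis contains a field $k_{0}$, as a filtered colimit $R \simeq \varinjlim_{i} R_{i}$ of smooth $k_{0}$-algebras essentially of finite type. After re-indexing we may assume that $t$ lifts to an element $t_{i} \in R_{i}$ for every sufficiently large $i$, while preserving the regularity of its image in the cotangent space of an appropriate localization of $R_{i}$.

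Given a finitely generated projective $R_{t}$-module $P$, the module $P$ is finitely presented, and a standard direct limit argument descends both $P$ and a finite free presentation of it to a finitely presented module $P_{i}$ over $(R_{i})_{t_{i}}$ for $i$ large enough. The projectivity of a finitely presented module is equivalent to the existence of a splitting of its presentation, encoded by an idempotent matrix; since such an idempotent descends through the filtered system once $i$ is chosen sufficiently large, one can arrange $P_{i}$ to be projective over $(R_{i})_{t_{i}}$.

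Since $R_{i}$ is essentially smooth of finite type over $k_{0}$, the Lindel--Bhatwadekar--Rao theorem, a Quillen--Suslin type statement for projectives over localizations of smooth algebras, then implies that $P_{i}$ is free over $(R_{i})_{t_{i}}$. Base-changing along the flat morphism $(R_{i})_{t_{i}} \to R_{t}$ yields the freeness of $P$. The principal obstacle is the descent step, which requires careful handling of finite presentation and of the idempotent splittings through the filtered colimit; Popescu's theorem is itself the deep ingredient that enables the reduction of a statement about an arbitrary equicharacteristic regular local ring to the geometric case of smooth $k_{0}$-algebras, where the Quillen--Suslin machinery takes over.
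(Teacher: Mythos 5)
The paper gives no proof of this theorem: it is imported verbatim from the cited references \cite{BR} and \cite{Pop}, and your outline (Popescu desingularization to reduce the equicharacteristic regular local ring to a filtered colimit of essentially smooth algebras, descent of the projective module and its idempotent splitting to a finite stage, then the Bhatwadekar--Rao/Lindel freeness theorem in the geometric case, followed by flat base change) is precisely the strategy of those references. Your proposal is correct and matches the source the paper relies on.
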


As a consequences of Proposition \ref{prop:dual} and Theorem \ref{thm:free}, we get the analogue for DQ-modules of \cite[Proposition 4.6]{KV}.

\begin{prop}\label{prop:locibre}
Let $\cM \in \Mod_{\coh}^{\rho-\fin}(\cA_{X}^{loc})$. The sheaf $\rho_\ast \cM$ is locally free over $\cO_{M}^{\hbar,loc}$.
\end{prop}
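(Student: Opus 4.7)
The strategy is to translate the duality isomorphism of Proposition \ref{prop:dual} into an $\mathrm{Ext}$-vanishing statement for $\rho_\ast\cM$ over $\cO_M^{\hbar,loc}$, and then feed that into the Popescu--Bhatwadekar--Rao theorem via a short commutative-algebra detour.

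First I would observe that by Remark \ref{lem:finhol} the module $\cM$ is holonomic, so Lemma \ref{lem:holref} gives $\Du^\prime_{\cA_X^{loc}}(\cM)[n]$ concentrated in degree zero, i.e.\ $\fExt^k_{\cA_X^{loc}}(\cM,\cA_X^{loc})=0$ for all $k\neq n$, where $n=d_X/2$. Plugging this into formula \eqref{for:loc} of Proposition \ref{prop:dual} yields
\[
\fExt^j_{\cO_M^{\hbar,loc}}(\rho_\ast\cM,\cO_M^{\hbar,loc})\;\simeq\;\rho_\ast\fExt^{j+n}_{\cA_X^{loc}}(\cM,\cA_X^{loc})=0\quad\text{for every } j\geq 1.
\]
At the same time, Proposition \ref{prop:finitness}(ii)(a) guarantees that $\rho_\ast\cM$ is a coherent $\cO_M^{\hbar,loc}$-module.

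Next I would pass to stalks. Fix $y\in M$ and set $R=\cO_{M,y}^{\hbar,loc}\simeq\cO_{M,y}^\hbar[\hbar^{-1}]$ and $N=(\rho_\ast\cM)_y$. Then $N$ is a finitely generated $R$-module with $\Ext^j_R(N,R)=0$ for all $j\geq 1$. Since $\cO_{M,y}^\hbar$ is regular local, its localization $R$ is regular; for every prime $\mathfrak{p}\subset R$ the localization $R_\mathfrak{p}$ is regular local and $\Ext$ commutes with localization for finitely generated modules, giving $\Ext^{\geq 1}_{R_\mathfrak{p}}(N_\mathfrak{p},R_\mathfrak{p})=0$. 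Over a regular local ring the Auslander--Buchsbaum formula forces the projective dimension of $N_\mathfrak{p}$ to equal the largest $j$ with $\Ext^j(N_\mathfrak{p},R_\mathfrak{p})\neq 0$; hence $N_\mathfrak{p}$ is free. So $N$ is locally free at every prime of $R$, which for a finitely generated module means $N$ is projective.

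Finally I would invoke Theorem \ref{thm:free} with the regular local ring $\cO_{M,y}^\hbar$ (which contains $\C$) and the element $\hbar$, which lies in the maximal ideal but not in its square: any finitely generated projective module over $\cO_{M,y}^{\hbar,loc}=\cO_{M,y}^\hbar[\hbar^{-1}]$ is free. This gives that $(\rho_\ast\cM)_y$ is free for every $y$, and since $\rho_\ast\cM$ is coherent this freeness extends to a neighbourhood of $y$, so $\rho_\ast\cM$ is locally free. The main obstacle is the step from $\mathrm{Ext}$-vanishing into the ring to actual projectivity: over the non-local ring $R$ this vanishing does not directly give projectivity, and one really needs to localize at every prime and apply Auslander--Buchsbaum before Theorem \ref{thm:free} can upgrade the result to freeness.
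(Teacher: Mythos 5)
Your argument is essentially the paper's proof: holonomicity of $\cM$ (Remark \ref{lem:finhol} together with Lemma \ref{lem:holref}) concentrates $\fExt^k_{\cA_X^{loc}}(\cM,\cA_X^{loc})$ in degree $n$, Proposition \ref{prop:dual} transports this to the vanishing of $\fExt^{\geq 1}_{\cO_M^{\hbar,loc}}(\rho_\ast\cM,\cO_M^{\hbar,loc})$, and stalkwise one deduces projectivity and then freeness via Theorem \ref{thm:free}. The one point you skip is that formula \eqref{for:loc} is stated for $\cM\in\Mod_{\gd}^{\rho-\fin}(\cA_X^{loc})$, not merely coherent (the localized duality Theorem \ref{thm:dualiteloc} requires good modules); the paper first uses Lemma \ref{lem:lattice} to cover $M$ by opens over which $\cM$ is good, which is harmless since the statement is local on $M$, but it does need to be said. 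Your expansion of the commutative-algebra step --- localizing at the primes of the non-local ring $\cO_{M,y}^{\hbar,loc}$ and using that over a regular local ring the projective dimension of a finitely generated module equals the top nonvanishing $\Ext$ into the ring --- correctly fills in what the paper compresses into the sentence about Noetherianity and finite global dimension, though the fact you use is the minimal-free-resolution characterization of projective dimension rather than the Auslander--Buchsbaum formula proper.
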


\begin{proof}
The question is local on $M$ and $\rho$-finite coherent modules are locally good on the base. Then there exists an open covering $(U_i)_{i \in I}$ of $M$ such that for every $i \in I$, $\cM|_{\rho^{-1}(U_i)}$ is good. Once, one has restricted $\cM$ to such an $U_i$ the proof is exactly the same as in \cite[Proposition 4.6]{KV}. We include it for the sake of completeness. 

By Lemma \ref{lem:finhol}, $\cM$ is holonomic. Then
\begin{equation*}
\fExt_{\cA_{X}^{loc}}^k(\cM,\cA_{X}^{loc})=0 \quad \textnormal{for} \; k \neq \frac{d_X}{2}.
\end{equation*}

It follows from Proposition \ref{prop:dual} that
\begin{equation*}
\fExt_{\cO_{M}^{\hbar,loc}}^k(\rho_\ast\cM,\cO_{M}^{\hbar,loc})=0 \quad \textnormal{for} \; k \neq 0.
\end{equation*}

By taking the stalk at $m \in M$, we get that
\begin{equation}\label{eq:vanishing}
\Ext_{\cO_{M,m}^{\hbar,loc}}^k(\rho_\ast\cM_{m},\cO_{M,m}^{\hbar,loc})=0 \quad \textnormal{for} \; k \neq 0.
\end{equation}

The ring $\cO_{M,m}^{\hbar,loc}$ is a Noetherian ring with finite global dimension. This combined with equation (\ref{eq:vanishing}) imply that $\rho_\ast \cM_m$ is a finitely generated projective module.

It follows from Theorem \ref{thm:free} that $\rho_\ast \cM_{m}$ is a finitely generated free module. Since $\rho_\ast \cM$ is a coherent $\cO_{M}^{\hbar,loc}$-module, this implies that $\rho_\ast \cM$ is a locally free sheaf.    
\end{proof}

\section{Geometric preliminary}

Let $X$ be a complex manifold and $Z$ be a closed analytic subset of $X$ and $p \in X$. We denote by $C_p(Z)$ the tangent cone of $Z$ in $X$ at $p$. We refer the reader to \cite{whitney} and \cite{KS1} for a thorough study of the notion of tangent cone and to \cite{Mum} for a gentle introduction.

The following result can be found in \cite[Ch. III Proposition 1.1.3]{Sch}.

\begin{prop}\label{prop:coneanalytique}
Let $X$ be a complex manifold, let $Z$ be a closed analytic subset of $X$ and let $p$ be a point of $X$. The set $C_p(Z)$ is a closed analytic (even algebraic) subset of $T_p X$, conic for the action of $\C^{\ast}$ on $T_p X$.
\end{prop}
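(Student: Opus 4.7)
The plan is to work in a holomorphic chart centred at $p$, reducing the problem to $X = \C^n$, $p = 0$, $T_pX \cong \C^n$. I would then show that $C_p(Z)$ coincides with the zero locus of the homogeneous ideal of initial forms of the ideal defining $Z$ at $p$, which immediately exhibits it as a closed algebraic subset of $T_pX$ stable under the $\C^\ast$-action.

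More precisely, let $I \subset \cO_{X,p} = \C\{z_1,\ldots,z_n\}$ be the ideal of germs vanishing on $Z$. For every nonzero $f \in I$, write its Taylor expansion $f = \sum_{k \geq m(f)} f_k$ with $f_{m(f)} \neq 0$ and call $f_{m(f)}$ the initial form of $f$. Let $J \subset \C[z_1,\ldots,z_n]$ be the ideal generated by the initial forms of all elements of $I$. By construction $J$ is a homogeneous ideal, so $V(J) \subset \C^n$ is a closed algebraic subset of $T_pX$ that is conic for the scaling action of $\C^\ast$. The proposition thus reduces to proving the identification $C_p(Z) = V(J)$.

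For the inclusion $C_p(Z) \subseteq V(J)$, take $v \in C_p(Z)$: by definition there exist sequences $z_\nu \in Z$ with $z_\nu \to 0$ and $\lambda_\nu \in \C$ with $\lambda_\nu z_\nu \to v$. For any $f \in I$ with initial form $f_m$ of degree $m$, the identity $\lambda_\nu^m f(z_\nu) = 0$ combined with the expansion $f(z_\nu) = f_m(z_\nu) + O(\|z_\nu\|^{m+1})$ and the homogeneity of $f_m$ yields $f_m(\lambda_\nu z_\nu) = -\lambda_\nu^m \cdot O(\|z_\nu\|^{m+1})$. Since $\lambda_\nu z_\nu$ remains bounded, $|\lambda_\nu|\|z_\nu\|$ is bounded, so $|\lambda_\nu|^m \|z_\nu\|^{m+1} \to 0$; passing to the limit gives $f_m(v) = 0$, that is, $v \in V(J)$.

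The reverse inclusion $V(J) \subseteq C_p(Z)$ is the genuine obstacle, since it is not formal and requires extracting analytic approximating sequences in $Z$ from purely algebraic data about $J$. I would handle it by one of two standard routes: either the curve-selection lemma applied to the real analytic set $Z \cap S_\varepsilon$ (where $S_\varepsilon$ is a small sphere around $p$), producing a real-analytic arc in $Z$ whose tangent direction at $p$ realises $v$; or a blow-up argument in which the strict transform of $Z$ in the blow-up of $X$ at $p$ meets the exceptional divisor $\mathbb{P}(T_pX)$ precisely along $\mathbb{P}(V(J))$, so that any $v \in V(J) \setminus \{0\}$ is the limit of scaled secants coming from points of $Z$ converging to $p$. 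Combining both inclusions identifies $C_p(Z)$ with $V(J)$ and thereby gives the conclusion.
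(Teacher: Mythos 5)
The paper itself does not prove this proposition --- it quotes it from \cite[Ch.~III, Prop.~1.1.3]{Sch} --- so your argument has to be judged on its own. Your strategy, identifying $C_p(Z)$ with the zero locus $V(J)$ of the ideal of initial forms, is the classical one (Whitney's theorem that his cones $C_3$ and $C_4$ coincide), and your proof of the inclusion $C_p(Z)\subseteq V(J)$ is complete and correct.

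The gap is the reverse inclusion, which you rightly flag as carrying all the content but then only gesture at; neither of your two routes, as stated, closes it. The curve-selection route is circular: to apply curve selection you must already know that $[v]$ is adherent to the set of secant directions of $Z$ at $p$, and that is exactly the statement $v\in C_p(Z)$ you are trying to extract from the algebraic condition $v\in V(J)$; curve selection upgrades ``limit of secants'' to ``tangent of an analytic arc'', it does not produce the limit point from membership in $V(J)$. The blow-up route is sound, but the assertion that the strict transform $\tilde Z$ in the blow-up $\pi\colon\widetilde{X}\to X$ at $p$ meets the exceptional divisor $E=\mathbb{P}(T_pX)$ exactly along $\mathbb{P}(V(J))$ \emph{is} the hard inclusion in disguise and itself needs proof (Whitney argues by contraposition, using the local parametrization theorem to produce, for every $v\notin C_p(Z)$, an element of $I$ whose initial form does not vanish at $v$). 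Note, however, that for the proposition as stated you do not need the identification with $V(J)$ at all: the sequential description shows that $\mathbb{P}(C_p(Z)\setminus\{0\})$ equals $\tilde Z\cap E$, where $\tilde Z$ is the union of the irreducible components of $\pi^{-1}(Z)$ not contained in $E$; this is a closed analytic subset of $\mathbb{P}(T_pX)$, hence algebraic by Chow, and its affine cone is the desired closed, conic, algebraic subset of $T_pX$. This is essentially the deformation-to-the-normal-cone picture the paper itself invokes in the proof of Proposition \ref{prop:coneiso}. So either supply Whitney's argument for $V(J)\subseteq C_p(Z)$, or drop $V(J)$ and run the blow-up/Chow argument to completion; as written, the proof is incomplete.
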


\begin{lemme}[{\cite[Lemma 8.11]{whitney}}]\label{lem:dimcone}
Let $X$ be a complex manifold, let $Z$ be a closed analytic subset of $X$ and $p \in X$. Then $\dim C_p(Z)=\dim_p Z$. If Z is of constant dimension near p then, $C_p(Z)$  is of constant dimension.
\end{lemme}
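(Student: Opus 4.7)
The plan is to reduce this purely geometric statement to the commutative-algebra fact that a Noetherian local ring has the same Krull dimension as its associated graded ring with respect to the maximal ideal.

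First, I would fix local coordinates at $p$ so that $X$ is identified with an open neighborhood of the origin in $\C^n$ and $p=0$. Let $I\subset\cO_{X,p}$ be the ideal of germs vanishing on $Z$, and let $\mathfrak m$ denote the maximal ideal of $\cO_{X,p}$. The crucial identification is that the tangent cone $C_p(Z)$, viewed inside $T_pX\simeq\C^n$, is precisely the zero locus of the homogeneous ideal $\mathrm{in}(I)$ generated by the leading forms $\mathrm{in}(f)$ of elements $f\in I$. This is a classical fact (proven e.g.\ by comparing the secant-line description of $C_p(Z)$ from \cite{whitney} or \cite{KS1} with the initial-form description). Granting this, the coordinate ring of $C_p(Z)$ is
\begin{equation*}
\C[T_pX]/\mathrm{in}(I)\;\simeq\;\gr_{\mathfrak m}\bl\cO_{X,p}/I\br\;=\;\gr_{\mathfrak m_{Z,p}}\cO_{Z,p},
\end{equation*}
where $\mathfrak m_{Z,p}$ is the maximal ideal of the analytic local ring of $Z$ at $p$.

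Now I would apply the fact from commutative algebra (a consequence of the Hilbert--Samuel polynomial argument) that for any Noetherian local ring $(R,\mathfrak m)$,
\begin{equation*}
\dim R\;=\;\dim\gr_{\mathfrak m}R.
\end{equation*}
Applied to $R=\cO_{Z,p}$, this gives $\dim_pZ=\dim\cO_{Z,p}=\dim\gr_{\mathfrak m_{Z,p}}\cO_{Z,p}=\dim C_p(Z)$, proving the first assertion.

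For the second assertion, I would argue via irreducible components. Shrinking $X$ around $p$, write $Z=\bigcup_{i=1}^r Z_i$ with $Z_i$ the (finitely many) irreducible components passing through $p$. The formation of tangent cones commutes with finite unions, so $C_p(Z)=\bigcup_i C_p(Z_i)$, and each $C_p(Z_i)$ has dimension $\dim_pZ_i$ by the first part. If $Z$ has constant dimension $d$ near $p$, then $\dim_pZ_i=d$ for every $i$, hence each irreducible piece $C_p(Z_i)$ of $C_p(Z)$ has the same dimension $d$, so $C_p(Z)$ itself is equidimensional of dimension $d$. The main subtle point, and what I expect to be the only genuine obstacle, is establishing cleanly the identification $C_p(Z)=V(\mathrm{in}(I))$ in the analytic (rather than algebraic) setting; once this geometric-to-algebraic translation is in hand, both conclusions follow mechanically from standard dimension theory, which is why the statement is quoted directly from \cite{whitney}.
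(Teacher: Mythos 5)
First, a remark on the comparison itself: the paper offers no proof of this lemma at all --- it is quoted verbatim from Whitney with the citation to \cite[Lemma 8.11]{whitney} --- so there is no argument of the author's to measure yours against. Your strategy for the first assertion (identify $C_p(Z)$ with the zero locus of the ideal of initial forms of $I=I(Z)_p$, identify the coordinate ring of that cone with $\gr_{\mathfrak m}\cO_{Z,p}$, and invoke $\dim R=\dim\gr_{\mathfrak m}R$) is the standard algebraic proof and is correct as far as it goes. Be aware, however, that the identification $C_p(Z)=V(\operatorname{in}(I))$ in the analytic category is itself the substantial content of Whitney's paper (his comparison of the secant-limit cone with the algebraically defined one); since you explicitly grant it, your argument for the first assertion is a reduction of Whitney's lemma to another theorem of Whitney's. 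That is acceptable for a result the paper cites anyway, but it is not a self-contained proof.

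The second assertion is where there is a genuine gap. From $\dim C_p(Z_i)=\dim_pZ_i=d$ for each irreducible component $Z_i$ you conclude that $C_p(Z)=\bigcup_iC_p(Z_i)$ is equidimensional, but this follows only if each $C_p(Z_i)$ is \emph{itself} equidimensional. The tangent cone of an irreducible germ need not be irreducible (already for a hypersurface germ whose initial form factors into distinct irreducible factors), so calling $C_p(Z_i)$ an ``irreducible piece'' begs the question: the claim ``the tangent cone of an equidimensional germ is equidimensional'' is exactly the second assertion restricted to irreducible $Z$, and your argument does not establish it. To close the gap you need an additional input. For instance, in the deformation to the normal cone used in the proof of Proposition \ref{prop:coneiso}, the set $W=\overline{q^{-1}(Z)\setminus t^{-1}(0)}$ is equidimensional of dimension $d+1$ (being the closure of a set isomorphic to $Z\times\C^{\ast}$), no irreducible component of $W$ is contained in the hypersurface $t^{-1}(0)$, and $C_p(Z)=W\cap t^{-1}(0)$; hence every irreducible component of $C_p(Z)$ has dimension exactly $d$ by the analytic principal ideal theorem. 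Equivalently, one can observe that the projectivized tangent cone is the exceptional fibre of the blow-up of the irreducible germ $(Z_i,p)$ at $p$, cut out by an invertible ideal, hence of pure codimension one by Krull. Either of these arguments in fact yields both assertions at once and lets you dispense with the algebraic detour through $\gr_{\mathfrak m}\cO_{Z,p}$ entirely.
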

The next proposition is a special case of \cite[Proposition 3.2.3]{Herrman}.
\begin{prop}\label{prop:conefini}
Let $(X,p)$ be a germ of analytic space of dimension $d_X$, $(Y,q)$ be a germ of complex manifold of dimension $d_X$ and $f\colon(X,p) \to (Y,q)$ be a germ of analytic map. If $d_p f : (C_p(X),0) \to (T_q Y,0)$ is finite, then $f\colon (X,p) \to (Y,q)$ is finite.
\end{prop}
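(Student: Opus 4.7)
The strategy is to argue by contradiction, combining the standard local finiteness criterion for germs of analytic maps with Whitney's description of the tangent cone as a set of limits of secants.

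I would first record two reformulations. On the one hand, a germ $f\colon(X,p)\to(Y,q)$ of analytic map is finite if and only if the set-germ $(f^{-1}(q),p)$ is reduced to the single point $p$: this is a local version of Remmert's finite mapping theorem, properness being automatic once the fibre is finite after restriction to a sufficiently small neighbourhood. On the other hand, since $d_p f\colon T_p X \to T_q Y$ is linear and $C_p(X)$ is a closed $\C^\ast$-stable cone in $T_p X$ by Proposition~\ref{prop:coneanalytique}, the finiteness of the restriction $d_p f\colon(C_p(X),0)\to(T_q Y,0)$ is equivalent to the single condition
\[
\ker(d_p f)\cap C_p(X)=\{0\}.
\]

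Assume now that $f$ is not finite. Then by the first reformulation there is a sequence $(x_n)_{n\in\N}$ of points of $X$ with $x_n\neq p$, $x_n\to p$ and $f(x_n)=q$. Fix a local embedding $(X,p)\hookrightarrow(\C^N,0)$ and local coordinates on $(Y,q)$ centred at $0$, and extend $f$ to a holomorphic map on a neighbourhood of the origin in $\C^N$. Passing to a subsequence, I may assume that the normalised secants $v_n := (x_n-p)/\|x_n-p\|$ converge to a vector $v\in T_p X$ of norm $1$. By Whitney's description of the tangent cone (see \cite{whitney}, \cite{KS1}), the vector $v$ belongs to $C_p(X)$; in particular $v\neq 0$. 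Applying the first-order Taylor expansion of $f$ at $p$ yields
\[
d_p f(v) \;=\; \lim_{n\to\infty} d_p f(v_n) \;=\; \lim_{n\to\infty} \frac{f(x_n)-f(p)}{\|x_n-p\|} \;=\; 0,
\]
since $f(x_n)=q=f(p)$ for every $n$. Hence $v$ is a nonzero element of $\ker(d_p f)\cap C_p(X)$, contradicting the finiteness of $d_p f|_{C_p(X)}$ established above.

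The only non-formal ingredient is the identification of limits of normalised secants with points of $C_p(X)$, which is exactly Whitney's theorem on tangent cones and which I would quote without proof. The two reformulations above turn the statement into a one-line computation, so I do not anticipate any serious technical obstacle; the subtle point is simply to keep track of which norm and which coordinate system are used when invoking Whitney's definition.
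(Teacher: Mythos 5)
Your proof is correct, but it follows a different route from the paper for the simple reason that the paper gives no proof at all: Proposition \ref{prop:conefini} is stated there as a special case of \cite[Proposition 3.2.3]{Herrman} and dispatched by citation. Your argument is a self-contained, elementary replacement. Both reformulations you use are sound: the local criterion ``finite $\Leftrightarrow$ $p$ isolated in $f^{-1}(q)$'' is the standard local representation theorem for finite holomorphic maps (cf.\ \cite[Ch.~2]{GR}), and since $C_p(X)$ is a closed $\C^\ast$-cone (Proposition \ref{prop:coneanalytique}) and $d_pf$ is linear, finiteness of the germ $d_pf|_{C_p(X)}$ at $0$ is indeed equivalent to $\ker(d_pf)\cap C_p(X)=\{0\}$, the fibre over $0$ being itself a cone. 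The one step worth making explicit is that a limit of normalised secants $(x_n-p)/\|x_n-p\|$ lies in $C_p(X)$; this is exactly the sequential description of the tangent cone via the deformation to the normal cone that the paper itself records and uses in the proof of Proposition \ref{prop:coneiso} (take $\xi_n=(x_n-p)/\|x_n-p\|$ and $t_n=\|x_n-p\|$ in $q(x;\xi,t)=(x;t\xi)$), so you are not importing anything beyond what the paper already assumes about tangent cones. Two small remarks: you should say a word about why $d_pf(v)$ is well defined and extension-independent (two holomorphic extensions of $f$ differ by elements of the ideal of $X$, whose differentials at $p$ vanish on $T_pX\supset C_p(X)$, so the Taylor computation does not depend on the chosen ambient extension); and note that your argument never uses the hypothesis $\dim X=\dim Y$, so it actually proves a slightly more general statement than the one quoted, which is harmless here. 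What the citation buys the paper is brevity; what your argument buys is transparency and independence from the blow-up machinery of \cite{Herrman}.
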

We will need the following result to prove Proposition \ref{prop:coneiso}.

\begin{lemme}[{\cite[Lemma 7.3]{KSsimple}}]\label{lem:annulationform}
Let $X$ be a complex manifold, $Y$ be a closed complex subvariety of $X$ and $f:X \to \C$ be a holomorphic function. Set $Z:=f^{-1}(0), Y':=\overline{(Y \setminus Z)}\cap Z$. Consider a $p$-form $\eta$, a $(p-1)$-form $\theta$ on $X$ and set
\begin{equation*}
\w=df \wedge \theta+ f \eta.
\end{equation*}
Assume that $\omega|_Y=0$. Then $\theta|_{Y'}=0$ and $\eta|_{Y'}=0$.
\end{lemme}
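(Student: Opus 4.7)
The plan is to verify the conclusion at a generic point of $Y'$ using a normal-form argument, and then extend by continuity. Since $Y' \subset \overline{Y \setminus Z}$, the holomorphic function $g := f|_{Y}$ does not vanish identically on any irreducible component of $Y$ containing a point of $Y'$; hence $Y' \cap Y_{\mathrm{reg}}$ is dense in $Y'$, and within it the subset where $dg \neq 0$ is also dense (and coincides with the smooth locus of the hypersurface $\{g = 0\}$ in $Y_{\mathrm{reg}}$). At such a point $y_0$ I would choose local coordinates $(y_1, \ldots, y_m)$ on $Y$ near $y_0$ in which $g = y_1$, so that locally $Y' \cap Y_{\mathrm{reg}} = \{y_1 = 0\}$.

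In these coordinates, the equation $\omega|_Y = 0$ reads $dy_1 \wedge \theta|_Y + y_1 \, \eta|_Y = 0$. Writing the uniquely determined decompositions $\theta|_Y = A + dy_1 \wedge B$ and $\eta|_Y = C + dy_1 \wedge D$, where $A, B, C, D$ do not involve $dy_1$, substituting, and separating the terms that contain $dy_1$ from those that do not, I get the two relations
\begin{equation*}
y_1 \, C = 0, \qquad dy_1 \wedge (A + y_1 D) = 0,
\end{equation*}
which force $C = 0$ and $A = - y_1 D$. Pulling back along the inclusion $\{y_1 = 0\} \hookrightarrow Y$ kills both $y_1$ and $dy_1$, so on this dense open part of $Y'$ one obtains $\theta|_{Y'} = A|_{y_1 = 0} = 0$ and $\eta|_{Y'} = C|_{y_1 = 0} = 0$.

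I would then conclude by continuity: the pullbacks of $\theta$ and $\eta$ to $Y'_{\mathrm{reg}}$ are holomorphic, and their vanishing on a dense open subset of $Y'_{\mathrm{reg}}$ propagates to all of $Y'_{\mathrm{reg}}$. The step I expect to be the main obstacle is justifying the reduction to the normal form $g = y_1$: one needs to ensure that the discarded loci (singular points of $Y$, and the subvariety of $Y' \cap Y_{\mathrm{reg}}$ where $dg = 0$) have strictly smaller dimension than $Y'$, so that the continuity argument closes. Once this is arranged, the rest is the purely algebraic separation of $dy_1$-components carried out above.
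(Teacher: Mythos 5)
The paper does not prove this lemma; it is quoted verbatim from \cite[Lemma 7.3]{KSsimple}, so your proposal has to stand on its own. Your algebraic separation of $dy_1$-components at a point where the normal form $g=y_1$ is available is correct, but the reduction to such points rests on two false density claims, the first of which is fatal to the whole approach. First, $Y'\cap Y_{\reg}$ need not be dense in $Y'$; it can be empty. Take $Y=\{z^2=x^2y\}\subset\C^3$ and $f=x$: then $Y$ is irreducible, $\overline{Y\setminus Z}=Y$, and $Y'=Y\cap\{x=0\}=\{x=z=0\}$, which is exactly $Y_{\sing}$. The hypothesis $\omega|_Y=0$ only constrains the pullback of $\omega$ to $Y_{\reg}$, which is disjoint from $Y'$ here, so your argument yields no information at any point of $Y'$, while the lemma still asserts something nontrivial there. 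Second, even where $Y$ is smooth the locus $\{dg\neq 0\}$ need not meet $Y'$: if $f|_Y=y_1^2$ then $dg=2y_1\,dy_1$ vanishes identically along $Y'=\{y_1=0\}$ even though this hypersurface is smooth, so the normal form $g=y_1$ is unattainable. The second defect is repairable (write $g=y_1^k u$ with $u$ a unit at a generic point of $Y'$, divide the resulting identity by $y_1^{k-1}$, and separate components as you do); the first is not repairable inside your framework, since no continuity argument can transport the hypothesis from $Y_{\reg}$ to a component of $Y'$ contained in $Y_{\sing}$.

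The proof in \cite{KSsimple} gets around both problems by desingularizing: one takes a proper map $\pi:\tilde Y\to\overline{Y\setminus Z}$ with $\tilde Y$ smooth and $\pi^{-1}(Z)$ a normal crossings divisor. Then $\pi^\ast\omega=0$ on all of $\tilde Y$ by continuity, $f\circ\pi$ is locally $y_1^{m_1}\cdots y_r^{m_r}$ times a unit, and your separation of $dy_1$-components applies upstairs after dividing by the appropriate power of $y_1$. One then descends to $Y'_{\reg}$ using that the components of $\pi^{-1}(Y')$ dominate those of $Y'$ and that the induced maps are generically submersive in characteristic zero. If you supply this desingularization step (or an equivalent limiting device), the rest of your computation goes through.
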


\begin{prop}\label{prop:coneiso}
Let $(X, \omega)$ be a symplectic complex manifold, $\Lambda$ be a closed isotropic analytic subset of $X$ and $p \in \Lambda$. The tangent cone $C_p(\Lambda)$ of $\Lambda$ at $p$ is an isotropic subset of the symplectic vector space $(T_p X,\omega_p)$.
\end{prop}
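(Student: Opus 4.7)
My proof plan uses the deformation to the tangent cone together with Lemma \ref{lem:annulationform}. The strategy is to set up a $1$-parameter family whose generic fiber is a rescaling of $\Lambda$ and whose special fiber at $0$ is $C_p(\Lambda)$, and to propagate the vanishing of $\omega$ from the generic fiber to the special one using that Lemma.

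First I would choose local holomorphic coordinates $(y_1,\ldots,y_{d_X})$ on a neighborhood $U$ of $p$ in $X$ sending $p$ to $0$, so that $T_pX \simeq \C^{d_X}$ and $\omega_p$ coincides with the constant-coefficient part of $\omega = \sum_{i<j}\omega_{ij}(y)\,dy_i\wedge dy_j$. I then consider the scaling map
\begin{equation*}
F \colon \C \times \C^{d_X} \to \C^{d_X}, \qquad (s,y) \mapsto sy,
\end{equation*}
and define $\mathcal{Z}$ as the germ at $(0,0)$ of the union of those irreducible components of $F^{-1}(\Lambda)$ not contained in $\{s=0\}$; equivalently, $\mathcal{Z}$ is the analytic closure of $\{(s,y) : s \neq 0,\ sy \in \Lambda\}$. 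By the standard sequential description of the tangent cone, one has $\mathcal{Z} \cap \{s=0\} = \{0\} \times C_p(\Lambda)$.

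A direct calculation using $d(sy_i) = s\,dy_i + y_i\,ds$ yields the decomposition
\begin{equation*}
F^{\ast}\omega \;=\; s\bigl(s\,\tilde\omega + ds \wedge \eta\bigr),
\end{equation*}
where $\tilde\omega(s,y) := \sum_{i<j}\omega_{ij}(sy)\,dy_i\wedge dy_j$ and $\eta(s,y) := \sum_{i<j}\omega_{ij}(sy)(y_i\,dy_j - y_j\,dy_i)$. The crucial feature is that $\tilde\omega|_{s=0}$ equals $\omega_p$ viewed as a constant $2$-form on $T_pX = \C^{d_X}$. Setting $\mu := s\,\tilde\omega + ds\wedge\eta$, the identity $F^{\ast}\omega = s\mu$ holds as an equality of holomorphic $2$-forms on $\C \times \C^{d_X}$, and $\mu = ds\wedge\eta + s\,\tilde\omega$ has exactly the shape $df\wedge\theta + f\eta'$ of Lemma \ref{lem:annulationform}, with $f = s$, $\theta = \eta$ and $\eta' = \tilde\omega$.

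The key step is to verify that $\mu|_{\mathcal{Z}} = 0$. On the dense open subset $\mathcal{Z} \cap \{s\neq 0\}$ the map $F$ sends $\mathcal{Z}$ into $\Lambda$, and the preimage of the isotropic dense open smooth locus $\Lambda' \subset \Lambda$ is itself dense open in $\mathcal{Z}^{\mathrm{reg}} \cap \{s\neq 0\}$; there $F^{\ast}\omega$ is the pullback of $\omega|_{\Lambda'} = 0$ and therefore vanishes. Dividing the relation $F^{\ast}\omega = s\mu$ by $s$, which is nonzero on this locus, forces $\mu$ to vanish on a dense open part of $\mathcal{Z}^{\mathrm{reg}}$, and by analytic continuity $\mu|_{\mathcal{Z}^{\mathrm{reg}}} = 0$. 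Applying Lemma \ref{lem:annulationform} to $\mu$ with $Y = \mathcal{Z}$, $Z = \{s=0\}$ and $Y' = \mathcal{Z}\cap Z = \{0\} \times C_p(\Lambda)$ yields in particular $\tilde\omega|_{Y'} = 0$; since $\tilde\omega|_{s=0} = \omega_p$, this is exactly the statement that $\omega_p$ vanishes on $C_p(\Lambda)$, i.e.\ that $C_p(\Lambda)$ is isotropic in $(T_pX,\omega_p)$. The main technical point I expect is the density/continuity argument establishing $\mu|_{\mathcal{Z}} = 0$, together with checking that $\mathcal{Z}$ is a genuine closed analytic subvariety so that Lemma \ref{lem:annulationform} applies verbatim; once these are in place, the rest is formal.
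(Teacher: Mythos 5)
Your argument is correct and is essentially the paper's proof: both use the deformation to the tangent cone (the scaling family degenerating $\Lambda$ to $C_p(\Lambda)$) together with Lemma \ref{lem:annulationform}. The only cosmetic difference is that the paper works in Darboux coordinates, where the pulled-back form is already of the shape $dt\wedge\theta+t\eta$, whereas your use of arbitrary coordinates produces an extra overall factor of $s$ that you remove by a (valid) density argument on $\mathcal{Z}\cap\{s\neq 0\}$ before applying the lemma.
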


\begin{proof} We adapt the proof of \cite[Proposition 7.1]{KSsimple} to our needs. Let $(x;u)$ be a local symplectic coordinate system on $X$ such that
\begin{align*}
p=(0;0), && \omega_p=\sum_{i=1}^n du_i \wedge dx_i.
\end{align*}
Consider the deformation to the normal cone $\tilde{X}_p $ of $X$ along $p$. We have the following diagram
\begin{equation*}
\xymatrix{X & \ar[l]_-{q}  \tilde{X}_p \ar[r]^-{t} & \C\\
p \ar@{^{(}->}[u]& \ar[l]_-{q} \ar@{^{(}->}[u] q^{-1}(p).&
}
\end{equation*}
Let $(x;\xi,t)$ be the coordinate system on $\tilde{X}_p$. Then
\begin{equation*}
\begin{array}{l}
q(x;\xi,t)=(x;t \xi),\\
T_p X \simeq \lbrace (x;\xi,t) \in \tilde{X}_p | t=0 \rbrace,\\
C_p(\Lambda) \simeq \overline{q^{-1}(\Lambda) \setminus t^{-1}(0)}\cap t^{-1}(0).
\end{array}
\end{equation*}
The last isomorphism is a direct consequence of the sequential description of the tangent cone. 

Taking the pull-back of $\omega$ by $q$, we get
\begin{equation*}
q^\ast\omega= \sum_{i=1}^n d(t \xi_i) \wedge dx_i=dt \wedge (\sum_{i=1}^n \xi_i \wedge dx_i)+ t \sum_{i=1}^n d\xi_i \wedge dx_i.
\end{equation*}

The form $q^\ast \omega$ vanishes on $(q^{-1} (\Lambda))_\reg$. Hence, by Lemma \ref{lem:annulationform} the form $\sum_{i=1}^n d\xi_i \wedge dx_i$ vanishes on $C_p(\Lambda)$ which proves the claim. 
\end{proof}

\begin{prop}[{\cite[Proposition 1.6.1]{KasKai}}]\label{prop:laglin}
Let $(E,\omega)$ be a symplectic vector space and let $\Lambda$ be an isotropic homogeneous analytic subset of $E$. Then there is a Lagrangian linear subspace $\lambda$ such that $\Lambda \cap \lambda \subset \lbrace 0 \rbrace$.
\end{prop}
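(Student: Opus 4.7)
The plan is to prove the statement by a dimension count in the Lagrangian Grassmannian. Since $\Lambda$ is isotropic, every irreducible component has dimension at most $n := \tfrac{1}{2}\dim_{\C} E$. Being homogeneous (conic), $\Lambda\setminus\{0\}$ is stable under the $\C^{\ast}$-action on $E$, so its image $\mathbb{P}\Lambda$ in the projective space $\mathbb{P}(E)$ is a closed analytic subset of dimension at most $n-1$.

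Let $LG(E)$ denote the Lagrangian Grassmannian of $(E,\omega)$, a smooth connected complex manifold of dimension $\tfrac{n(n+1)}{2}$. I would introduce the incidence variety
\[ I \;=\; \bigl\{ (\ell,\lambda) \in \mathbb{P}\Lambda \times LG(E) \;\big|\; \ell \subset \lambda \bigr\}, \]
with projections $p_1\colon I \to \mathbb{P}\Lambda$ and $p_2\colon I \to LG(E)$. For a fixed line $\ell = \C v \subset E$, a Lagrangian $\lambda$ contains $\ell$ if and only if $\ell \subset \lambda \subset \ell^{\omega}$; by symplectic reduction such Lagrangians correspond bijectively to Lagrangian subspaces of the symplectic quotient $\ell^{\omega}/\ell$, a symplectic vector space of dimension $2n-2$. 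Consequently the fibers of $p_1$ are smooth of pure dimension $\tfrac{n(n-1)}{2}$, and
\[ \dim I \;\leq\; (n-1) + \frac{n(n-1)}{2} \;=\; \frac{n(n+1)}{2} - 1 \;<\; \dim LG(E). \]

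It follows that $p_2(I)$ is a proper subset of $LG(E)$, and any Lagrangian $\lambda \in LG(E) \setminus p_2(I)$ automatically contains no nonzero vector of $\Lambda$, i.e.\ $\Lambda \cap \lambda \subset \{0\}$. The main technical point I would need to address is ensuring that $\mathbb{P}\Lambda$ is a closed analytic subset of $\mathbb{P}(E)$ of the expected dimension and that $p_2(I)$ is genuinely proper inside $LG(E)$; the first follows from the homogeneity of $\Lambda$ together with the properness of the Hopf quotient $E\setminus\{0\} \to \mathbb{P}(E)$, and the second from Remmert's proper mapping theorem (since $\mathbb{P}\Lambda$ is compact, $p_2$ is proper, so $p_2(I)$ is a closed analytic subset of dimension strictly less than $\dim LG(E)$, hence has nonempty open complement in the connected manifold $LG(E)$).
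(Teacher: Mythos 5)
Your argument is correct. Note that the paper gives no proof of this proposition at all: it is quoted verbatim from Kashiwara--Kawai, so there is no internal argument to compare against, and your dimension count in the Lagrangian Grassmannian is a clean self-contained substitute (and is essentially the standard genericity argument for this kind of statement). The estimates all check out: the smooth locus of an isotropic subvariety has dimension at most $n$, so $\dim \mathbb{P}\Lambda \leq n-1$; the fibre of $p_1$ over a line $\ell$ is indeed the Lagrangian Grassmannian of the symplectic reduction $\ell^{\omega}/\ell$, of dimension $n(n-1)/2$; and $(n-1)+n(n-1)/2 = n(n+1)/2 - 1$ is strictly less than $\dim \mathrm{LG}(E)$, so $p_2(I)$ is a closed analytic subset of strictly smaller dimension, whose complement in the connected manifold $\mathrm{LG}(E)$ is a nonempty open set of admissible Lagrangians. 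Two small points you should make explicit. First, you use the closedness of $\Lambda$ in $E$ so that $\mathbb{P}\Lambda$ is compact and $p_2$ is proper; the statement only says ``analytic subset,'' but this is harmless since the proposition is applied to the tangent cone $C_p(\Lambda)$, which is closed, conic and even algebraic by Proposition \ref{prop:coneanalytique} (alternatively, without properness one can still conclude, since the image of a set of dimension $<\dim \mathrm{LG}(E)$ under a holomorphic map has empty interior). Second, dispose of the degenerate case $\Lambda \subset \{0\}$ separately, where any Lagrangian works.
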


\begin{cor}\label{prop:interlag}
Let $(E,\omega)$ be a symplectic complex vector space of finite dimension. Let $\Lambda$ be Lagrangian analytic subset of $E$ and let $p$ be a point of $\Lambda$. Then there exists a linear Lagrangian subspace $\lambda$ of $E$ such that $\lambda \cap C_p(\Lambda)\subset \lbrace 0 \rbrace$.
\end{cor}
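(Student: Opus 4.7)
The strategy is to apply Proposition \ref{prop:laglin} directly to the tangent cone $C_p(\Lambda)$, regarded as a subset of the symplectic vector space $(E,\omega)$ itself. Since $E$ is a finite-dimensional complex vector space, the tangent space $T_p E$ is canonically isomorphic to $E$, and this isomorphism identifies the canonical symplectic form on $T_p E$ with $\omega$. So there is nothing lost in writing $C_p(\Lambda) \subset E$.

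First I would verify the three hypotheses needed to apply Proposition \ref{prop:laglin} to $C_p(\Lambda)$. By Proposition \ref{prop:coneanalytique}, $C_p(\Lambda)$ is a closed analytic subset of $T_p E \simeq E$ and is conic, i.e.\ homogeneous for the $\mathbb{C}^\ast$-action. Moreover, since $\Lambda$ is Lagrangian it is in particular isotropic, and Proposition \ref{prop:coneiso} then yields that $C_p(\Lambda)$ is an isotropic subset of $(T_p E,\omega_p) \simeq (E,\omega)$.

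Having checked that $C_p(\Lambda)$ is an isotropic homogeneous analytic subset of $(E,\omega)$, I would invoke Proposition \ref{prop:laglin} to produce a linear Lagrangian subspace $\lambda$ of $E$ satisfying $C_p(\Lambda) \cap \lambda \subset \{0\}$, which is precisely the conclusion. There is no real obstacle here: the corollary is essentially a repackaging of the two preceding propositions once one uses the canonical identification $T_p E \simeq E$.
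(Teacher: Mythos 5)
Your proof is correct and is exactly the argument the paper intends (the corollary is stated without proof precisely because it follows by combining Propositions \ref{prop:coneanalytique} and \ref{prop:coneiso} with Proposition \ref{prop:laglin} under the canonical identification $T_pE\simeq E$). Nothing to add.
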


\begin{prop}\label{prop:mapfinie}
Let $(E, \omega)$ be a complex symplectic vector space of finite dimension, let $U$ be an open subset of $E$, let $\Lambda$ be a closed Lagrangian analytic subset of $U$ and let $p \in \Lambda$. There exists a linear Lagrangian subspace $\lambda$ of $E$, an open neighbourhood $V$ of $p$ and an open neighbourhood $M$ of $\pi_\lambda(p)$ the image of $p$ by the canonical projection $\pi_\lambda:E \to E/\lambda$, such that the map 
\begin{equation*}
\rho:=\pi_\lambda|_V^{M}:V \to M
\end{equation*}
 is finite when restricted to $\Lambda \cap V$. 
\end{prop}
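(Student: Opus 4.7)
My plan is to reduce the statement to the finiteness criterion on tangent cones provided by Proposition \ref{prop:conefini}. First, using Corollary \ref{prop:interlag}, I will choose a linear Lagrangian subspace $\lambda$ of $E$ satisfying $\lambda \cap C_p(\Lambda) \subset \lbrace 0 \rbrace$. Writing $2n = \dim E$, the Lagrangian $\lambda$ has dimension $n$, hence $E/\lambda$ also has dimension $n$; and since $\Lambda$ is Lagrangian, Lemma \ref{lem:dimcone} gives $\dim C_p(\Lambda) = \dim_p \Lambda = n$. So both the germ $(\Lambda, p)$ and the target $(E/\lambda, \pi_\lambda(p))$ have the same dimension $n$, as required by Proposition \ref{prop:conefini}.

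Next, I will consider the germ of analytic map $f := \pi_\lambda|_\Lambda : (\Lambda, p) \to (E/\lambda, \pi_\lambda(p))$. After the canonical identifications $T_p E \simeq E$ and $T_{\pi_\lambda(p)}(E/\lambda) \simeq E/\lambda$, the differential of $f$, viewed as a germ of analytic map $d_p f : (C_p(\Lambda), 0) \to (E/\lambda, 0)$, identifies with the restriction of the linear map $\pi_\lambda$ to the analytic cone $C_p(\Lambda)$, which is a closed analytic set by Proposition \ref{prop:coneanalytique}. Its fiber over $0$ is $\lambda \cap C_p(\Lambda)$, which is contained in $\lbrace 0 \rbrace$ by our choice of $\lambda$ and hence equals $\lbrace 0 \rbrace$ since the tangent cone always contains the origin. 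A germ of analytic map whose fiber over the base point reduces to the base point is finite, so $d_p f$ is a finite germ.

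Applying Proposition \ref{prop:conefini} with $X = \Lambda$ and $Y = E/\lambda$ then yields that $f: (\Lambda, p) \to (E/\lambda, \pi_\lambda(p))$ is itself finite as a germ of analytic map. Unwinding the definition of finiteness for a germ produces open neighbourhoods $V$ of $p$ in $E$ and $M$ of $\pi_\lambda(p)$ in $E/\lambda$ with $\pi_\lambda(V) \subset M$ such that $\rho := \pi_\lambda|_V^M$ induces a finite analytic map from $\Lambda \cap V$ into $M$, which is precisely the conclusion sought.

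The main subtlety I anticipate is the identification of the germ-differential $d_p f$ with $\pi_\lambda|_{C_p(\Lambda)}$: this requires invoking the naturality of the tangent cone under the linear map $\pi_\lambda$ together with the standard trivialisation of tangent spaces on vector spaces, and must be checked carefully. This identification is nonetheless built into the sequential description of the tangent cone already used in the proof of Proposition \ref{prop:coneiso}, so once it is stated precisely, the remainder of the argument is a direct assembly of the geometric preliminaries established in this section.
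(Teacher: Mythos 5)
Your proposal is correct and follows essentially the same route as the paper: choose $\lambda$ via Corollary \ref{prop:interlag} so that $\lambda \cap C_p(\Lambda) \subset \lbrace 0 \rbrace$, identify $d_p\rho$ with $\pi_\lambda|_{C_p(\Lambda)}$, deduce finiteness of this germ from the vanishing of its fibre over $0$, check the dimension equality $\dim_p\Lambda = \dim E/\lambda$, and conclude by Proposition \ref{prop:conefini}. The only difference is that you make the dimension count and the identification of the differential more explicit than the paper does, which is harmless.
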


\begin{proof}
Since $U$ is an open subset of a $E$, we identify $T_pU$ and $E$. 
It follows from Proposition \ref{prop:coneanalytique} that $C_p(\Lambda)$ is an algebraic subset of $E$ conic for the action of $\C^\ast$ on $E$. By Corollary \ref{prop:interlag}, there exist a Lagrangian linear subspace $\lambda$ of $E$ such that $C_p(\Lambda) \cap \lambda \subset \lbrace 0 \rbrace$. The quotient map $\pi_\lambda : E \to E /  \lambda$  induces a germ of analytic map $\rho:(\Lambda,p) \to (E/\lambda,  \pi_\lambda(p))$. We consider the differential $d_p\pi_{\lambda}:(C_p(\Lambda),0) \to (E/\lambda,0)$. Since $d_p \rho=\pi_\lambda$, we have 
\begin{equation*}
(d_p \rho)^{-1}(0)=\pi^{-1}_\lambda(0) \cap C_p(\Lambda)\subset\lbrace 0 \rbrace.
\end{equation*}

Thus, the germ of map $d_p \rho:(C_p(\Lambda),0) \to (E/\lambda,0)$ is finite and $\dim_p \Lambda = \dim E / \lambda$ which implies that the germ of analytic map $\rho:(\Lambda,p) \to (E/\lambda,  \pi_\lambda(p))$ is finite.
\end{proof}

\section{The codimension-three conjecture}

As indicated in the introduction the strategy to prove the codimension-three conjecture for DQ-modules is to reduce the non-commutative statement to a commutative problem where it is possible to apply Theorem \ref{thm:codimcom} due to Kashiwara and Vilonen.

\subsection{Proof of the Theorem \ref{thm:codim}}

The aim of this subsection is to prove the codimension-three conjecture for DQ-modules. Let us recall Theorem \ref{thm:codim}.

\begin{theoetoile}
Let $X$ be a complex manifold endowed with a DQ-algebroid stack $\cA_X$ such that the associated Poisson structure is symplectic. Let $\Lambda$ be a closed Lagrangian analytic subset of $X$, and $Y$ be a closed analytic subset of $\Lambda$ such that $\codim_\Lambda Y \geq 3$. Let $\cM$ be a holonomic $(\cA^{loc}_X|_{X \setminus Y})$-module, whose support is contained in $\Lambda \setminus Y$. Assume that $\cM$ has an $\cA_X|_{X \setminus Y}$-lattice. Then $\cM$ extends uniquely to a holonomic module defined on $X$ whose support is contained in $\Lambda$. 
\end{theoetoile}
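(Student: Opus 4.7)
The plan follows the strategy sketched in the introduction: one reduces the noncommutative extension problem to a commutative one via a Lagrangian projection that is finite on the Lagrangian support, applies Theorem \ref{thm:codimcom} of Kashiwara and Vilonen in the commutative setting, and finally lifts the resulting extension back through the coherence criterion of Proposition \ref{prop:finitness}.

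Uniqueness is immediate. Writing $j\colon X\setminus Y\hookrightarrow X$, any holonomic $\cA_X^{loc}$-extension $\cM'$ of $\cM$ supported in $\Lambda$ satisfies $\Hn^0_Y(\cM')=\Hn^1_Y(\cM')=0$ by Lemma \ref{lem:vanishing}, since $\codim_\Lambda Y\geq 3$; the local cohomology triangle then gives $\cM'\simeq j_\ast j^{-1}\cM'=j_\ast\cM$. For existence it suffices to show that $j_\ast\cM$ is a coherent $\cA_X^{loc}$-module supported in $\Lambda$, which is local on $X$ and trivial away from $Y$. Fix $p\in Y$. By Remark \ref{rem:reduction} we may assume that a neighbourhood of $p$ sits inside some $T^\ast M$ with $\cA_X$ the Moyal-Weyl star-algebra. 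After an appropriate linear symplectic change of coordinates, Proposition \ref{prop:mapfinie} provides an open $V\ni p$, an open $M_0\subset M$ and the cotangent projection $\rho\colon V\to M_0$ such that $\rho|_{\Lambda\cap V}$ is finite. Since $\rho|_{\Lambda\cap V}$ preserves dimensions, the set $Z_M:=\rho(Y\cap\Lambda\cap V)$ is a closed analytic subset of $M_0$ with $\codim_{M_0}Z_M\geq 3$; write $U_M=M_0\setminus Z_M$ and $j_M\colon U_M\hookrightarrow M_0$.

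Using Proposition \ref{prop:hollattice} I replace the given lattice $\cN$ by the reflexive lattice $\tilde{\cN}:=\fExt^n_{\cA_X}(\fExt^n_{\cA_X}(\cN,\cA_X),\cA_X)$. On $V$ both $\cM$ and $\tilde{\cN}$ are $\rho$-finite, so Propositions \ref{prop:rhoast} and \ref{prop:locibre} produce a coherent locally free $\cO^{\hbar,loc}_{U_M}$-module $\rho_\ast\cM$ and a coherent $\cO^\hbar_{U_M}$-module $\rho_\ast\tilde{\cN}$. Applying the duality formula \eqref{for:pasloc} of Proposition \ref{prop:dual} twice identifies $\rho_\ast\tilde{\cN}$ with the $\cO^\hbar_{U_M}$-bidual of $\rho_\ast\cN$, so $\rho_\ast\tilde{\cN}$ is reflexive as a coherent $\cO^\hbar_{U_M}$-module. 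Theorem \ref{thm:codimcom} then produces a coherent $\cO^\hbar_{M_0}$-module $\tilde{\cL}:=j_{M\ast}(\rho_\ast\tilde{\cN})$ extending $\rho_\ast\tilde{\cN}$ across $Z_M$; via the algebra morphism $\psi\colon\rho^{-1}\cB_{M_0}\to\cA_X$, this $\tilde{\cL}$ inherits a canonical $\cB_{M_0}$-module structure.

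It remains to apply Proposition \ref{prop:finitness}(ii)(b) to the $\cA_V^{loc}$-module $j_\ast\cM|_V$. Its support is contained in $\Lambda\cap V$, on which $\rho$ is finite, so it is $\rho$-finite; and support in $\Lambda$ of the extension then follows from closedness of $\Lambda$. The crux of the argument is to identify $\rho_\ast(j_\ast\cM|_V)$ with $\tilde{\cL}^{loc}$ as $\cO^{\hbar,loc}_{M_0}$-modules; once this is done, $\tilde{\cL}$ is the $\cO^\hbar_{M_0}$-lattice, with the required $\cB_{M_0}$-module structure, demanded by the criterion, and Proposition \ref{prop:finitness}(ii)(b) yields the coherence of $j_\ast\cM|_V$. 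The main obstacle of the argument lies precisely in this identification: it reduces to the base-change formula $\rho_\ast j_\ast\simeq j_{M\ast}\rho_\ast$ in our setup, whose proof relies on the properness of $\rho|_{\Lambda\cap V}$ together with the fact that $\rho^{-1}(U_M)\cap Y\cap\Lambda=\emptyset$ (immediate from the definition of $Z_M$) and the inclusion $\Supp\cM\subset\Lambda$, in order to compare sections of $\cM$ over preimages of open subsets of $M_0$ crossing and avoiding $Z_M$.
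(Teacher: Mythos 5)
Your overall strategy is the one the paper follows (uniqueness via Lemma \ref{lem:vanishing}, reduction by Proposition \ref{prop:mapfinie} to a finite Lagrangian projection, reflexivization of the lattice via Propositions \ref{prop:hollattice} and \ref{prop:dual}, Theorem \ref{thm:codimcom} on the base, then Proposition \ref{prop:finitness}(ii)(b)). But the step you yourself flag as the crux contains a genuine gap. The ``base change formula'' $\rho_\ast j_\ast\simeq j_{M\ast}\rho_\ast$ cannot hold as stated, because the square with $V\setminus Y$ on top and $M_0\setminus Z_M$ on the bottom is not Cartesian: $\rho^{-1}(M_0\setminus Z_M)=V\setminus\rho^{-1}(Z_M)$, and $\rho^{-1}(Z_M)$ is in general strictly larger than $Y$ (it contains, e.g., all points of $\Lambda$ lying over $Z_M$, not just those in $Y$); in particular $\rho$ does not even map $V\setminus Y$ into $M_0\setminus Z_M$, so $j_{M\ast}\rho_\ast\cM$ is not defined without first restricting $\cM$ to $V\setminus\rho^{-1}(Z_M)$. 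The Cartesian square, and hence the base change you can actually use (via properness of $\rho$ on the support), is the one with $V\setminus\rho^{-1}(Z_M)$ on top. What this gives you is the coherence of $(j\circ i)_\ast\bigl(i^{-1}\cM\bigr)$, where $i\colon V\setminus\rho^{-1}(Z_M)\hookrightarrow V\setminus Y$, not of $j_\ast\cM$.

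The missing ingredient is the identification $i_\ast i^{-1}\cM\simeq\cM$ on $V\setminus Y$, equivalently that sections of $\cM$ over $\rho^{-1}(W)\setminus\rho^{-1}(Z_M)$ extend uniquely across $\bigl(\rho^{-1}(Z_M)\cap\Lambda\bigr)\setminus Y$. This does not follow from properness of $\rho|_{\Lambda\cap V}$ together with $\rho^{-1}(U_M)\cap Y=\emptyset$, as you suggest; those facts only set up the Cartesian square. What is needed is that $\rho^{-1}(Z_M)\cap\Lambda$ has codimension $\geq 3$ in $\Lambda$ (because $\rho|_\Lambda$ is finite and $\dim Z_M=\dim Y\leq n-3$), so that Lemma \ref{lem:vanishing} applied to the holonomic module $\cM$ on $V\setminus Y$ kills $\Hn^0$ and $\Hn^1$ with support in $\bigl(\rho^{-1}(Z_M)\cap\Lambda\bigr)\setminus Y$, i.e.\ one reapplies the uniqueness argument (Proposition \ref{prop:unicite}) to the closed subset $\rho^{-1}(Z_M)\cap\Lambda\setminus Y$ of $\Lambda\setminus Y$. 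This is exactly how the paper closes the argument, and without it your identification of $\rho_\ast(j_\ast\cM|_V)$ with $\tilde{\cL}^{loc}$, and hence the application of Proposition \ref{prop:finitness}(ii)(b) to $j_\ast\cM$ rather than to $(j\circ i)_\ast i^{-1}\cM$, is not justified.
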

Let $X$, $\Lambda$, $Y$ and $\cM$ as in Theorem \ref{thm:codim}. We write $j: X \setminus Y \to X$ for the open embedding of $X \setminus Y$ into $X$.
\subsubsection{Unicity of the extension}
We prove the unicity of the extensions of $\cM$ to $X$. As in \cite{KV}, we have
\begin{prop}\label{prop:unicite}
Let $\cM^\prime$ be an extension of $\cM$.
Then $\cM^\prime$ is isomorphic to $j_\ast \cM$. 
\end{prop}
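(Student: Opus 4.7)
The plan is to compare $\cM'$ to $j_\ast \cM$ via the canonical adjunction morphism and to show that its kernel and cokernel both vanish using the vanishing result for local cohomology of holonomic DQ-modules.

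Since $j^{-1}\cM' \simeq \cM$ by hypothesis, the unit of the adjunction $(j^{-1}, j_\ast)$ provides a canonical morphism
\begin{equation*}
\varphi \colon \cM' \to j_\ast j^{-1} \cM' \simeq j_\ast \cM.
\end{equation*}
I would embed this in the standard distinguished triangle
\begin{equation*}
\dR \Gamma_Y \cM' \to \cM' \to \dR j_\ast j^{-1} \cM' \xrightarrow{+1}
\end{equation*}
in $\Der^b(\cA_X^{loc})$. Taking cohomology and using that $\cM'$ is concentrated in degree $0$, the long exact sequence collapses to
\begin{equation*}
0 \to \Hn^0_Y \cM' \to \cM' \xrightarrow{\varphi} j_\ast \cM \to \Hn^1_Y \cM' \to 0.
\end{equation*}

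Hence it suffices to show that $\Hn^0_Y \cM' = 0$ and $\Hn^1_Y \cM' = 0$. This is precisely where the codimension hypothesis enters: by assumption $\cM'$ is a holonomic $\cA_X^{loc}$-module supported in the Lagrangian subvariety $\Lambda$, and $Y$ is a closed analytic subset of $\Lambda$ with $\codim_\Lambda Y \geq 3$. Applying Lemma~\ref{lem:vanishing} to $\cM'$, one obtains $\Hn^j_Y \cM' = 0$ for every $j$ with $0 \leq j < \codim_\Lambda Y$, in particular for $j=0$ and $j=1$ (and even $j=2$).

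Therefore $\varphi \colon \cM' \xrightarrow{\sim} j_\ast \cM$ is an isomorphism, which proves unicity. The only non-routine input is the vanishing Lemma~\ref{lem:vanishing}, which has already been established; the rest is the standard $\dR\Gamma_Y$--$\dR j_\ast$ distinguished triangle. No obstacle is expected at this stage: the hard work of the theorem is in producing an extension of $\cM$ and showing it is coherent, not in identifying it with $j_\ast \cM$.
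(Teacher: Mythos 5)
Your proof is correct and follows essentially the same route as the paper: both reduce to the four-term exact sequence $0 \to \Hn^0_Y\cM' \to \cM' \to j_\ast \cM \to \Hn^1_Y\cM' \to 0$ and then invoke Lemma~\ref{lem:vanishing} together with $\codim_\Lambda Y \geq 3$ to kill the two local cohomology terms. The only cosmetic difference is that you derive the exact sequence from the $\dR\Gamma_Y$--$\dR j_\ast$ distinguished triangle, whereas the paper writes it down directly.
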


\begin{proof}
Let $\cM^\prime$ be an extension of $\cM$. We have the following exact sequence.
\begin{equation*}
0 \to \Hn_Y^0(\cM^\prime) \to \cM^\prime \to j_\ast j^{-1} \cM^\prime \to \Hn_Y^1(\cM^\prime) \to 0.
\end{equation*}
By definition of an extension, $j^{-1} \cM^\prime \simeq \cM$. The above exact sequence becomes
\begin{equation*}
0 \to \Hn_Y^0(\cM^\prime) \to \cM^\prime \to j_\ast \cM \to \Hn_Y^1(\cM^\prime) \to 0.
\end{equation*}
Since, $\codim_\Lambda Y=3$, it follows by Lemma \ref{lem:vanishing} that $\Hn_Y^i(\cM^\prime)\simeq 0$ for $i=0,1$. This implies that $\cM^\prime \simeq j_\ast \cM$.
\end{proof}

\subsubsection{The support of the extension is Lagrangian}
As in \cite{KV}, we have

\begin{prop}
The support of $j_\ast \cM$ is a closed Lagrangian analytic subset of $X$.
\end{prop}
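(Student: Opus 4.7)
The plan is to identify $\Supp(j_\ast \cM)$ with the closure $\overline{\Supp(\cM)}$ of $\Supp(\cM)$ taken in $X$, and then show that this closure is a closed Lagrangian analytic subset of $X$ contained in $\Lambda$. Crucially, no appeal to coherence of $j_\ast \cM$ is needed for this statement, which is convenient since its coherence is established only later in the paper.

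First I would prove the set-theoretic equality $\Supp(j_\ast \cM) = \overline{\Supp(\cM)}$. The inclusion $\supseteq$ is formal: for any $x \in \Supp(\cM) \subseteq X \setminus Y$ one has $(j_\ast \cM)_x = \cM_x \neq 0$, and $\Supp(j_\ast \cM)$ is automatically closed. Conversely, if $x \in X$ does not belong to $\overline{\Supp(\cM)}$, then $x$ has an open neighbourhood $U$ in $X$ with $U \cap \Supp(\cM) = \emptyset$; consequently $\cM|_{U \setminus Y} = 0$, hence $(j_\ast \cM)|_U = 0$, so $x \notin \Supp(j_\ast \cM)$.

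Next I would verify the geometric properties of $\overline{\Supp(\cM)}$. Since $\cM$ is holonomic with support contained in $\Lambda \setminus Y$, the set $\Supp(\cM)$ is a closed analytic subset of $X \setminus Y$ of pure dimension $n = d_X/2$. The hypothesis $\codim_\Lambda Y \geq 3$ gives $\dim Y \leq n - 3 < n$, so the Remmert-Stein extension theorem applies and $\overline{\Supp(\cM)}$ is a closed analytic subset of $X$ of pure dimension $n$. Since $\Lambda$ is closed and contains $\Supp(\cM)$, we have $\overline{\Supp(\cM)} \subseteq \Lambda$. Isotropy follows because the regular locus of $\Supp(\cM)$, which sits inside the Lagrangian $\Lambda$ and is therefore $\w$-isotropic, forms a dense open subset of the regular locus of $\overline{\Supp(\cM)}$; analytic continuation of the holomorphic form $\w$ restricted to the smooth part of $\overline{\Supp(\cM)}$ then forces it to vanish everywhere on $\overline{\Supp(\cM)}_\reg$. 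Combined with the pure dimension $n = d_X/2$, this gives the Lagrangian property. The only mildly delicate step is the invocation of Remmert-Stein, which hinges on the strict inequality $\dim Y < n$ furnished by the codimension-three hypothesis; everything else is formal sheaf theory together with standard facts about Lagrangian subvarieties.
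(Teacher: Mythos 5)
Your proposal is correct and follows essentially the same route as the paper: identify $\Supp(j_\ast \cM)$ with $\overline{\Supp(\cM)}$, invoke Remmert--Stein (which applies since $\dim Y \leq n-3 < n$), and deduce isotropy by passing to the closure. You simply spell out the support identification and the density/continuation argument for isotropy, which the paper leaves implicit.
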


\begin{proof}
The module $\cM$ is holonomic. Thus, $\Supp(\cM)$ is a complex analytic Lagrangian subset of $X \setminus Y$ and $\Supp(j_\ast \cM)=\overline{\Supp(\cM)}$. By the Remmert-Stein theorem the set $\overline{\Supp(\cM)}$ is a closed analytic subset of $X$ of dimension $d_X / 2$ and is isotropic since the closure of an isotropic subset is isotropic.
\end{proof}
\subsubsection{Proof of the coherence}\label{proofcoherence}
The only thing that remain to prove is that the $\cA_X^{loc}$-module $j_\ast \cM$ is coherent. We will need the following proposition.
\begin{prop}\label{prop:locom}
Let $X$ be a complex manifold and $Y$ be a closed analytic subset of $X$ and $j:X \setminus Y \to X$ be the inclusion of $X \setminus Y$ into $X$. If $\cN$ is a coherent reflexive $\cO_{X \setminus Y}^\hbar$-module. Then
\begin{equation*}
(j_\ast \cN)^{loc} \simeq j_\ast (\cN^{loc}).
\end{equation*}
\end{prop}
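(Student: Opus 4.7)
The plan is to exhibit a natural comparison map $\alpha \colon (j_\ast \cN)^{loc} \to j_\ast (\cN^{loc})$ and then to prove it is an isomorphism by a stalk-wise computation on $Y$. Since $\cN \simeq \fHom(\fHom(\cN,\cO_{X\setminus Y}^\hbar),\cO_{X\setminus Y}^\hbar)$ and $\cO_{X\setminus Y}^\hbar$ is $\hbar$-torsion-free, the reflexive sheaf $\cN$ is itself $\hbar$-torsion-free, so the unit $\cN \hookrightarrow \cN^{loc}$ is a monomorphism. Applying the left-exact functor $j_\ast$ yields an inclusion $j_\ast \cN \hookrightarrow j_\ast (\cN^{loc})$; the target being naturally an $\cO_X^{\hbar, loc}$-module, this inclusion factors uniquely through $(j_\ast \cN)^{loc}$, producing $\alpha$.

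Both sides of $\alpha$ restrict to $\cN^{loc}$ on $X \setminus Y$, so it suffices to show that $\alpha_x$ is an isomorphism for each $x \in Y$. The functor $(\cdot)^{loc} = \C^{\hbar,loc} \te_{\C^\hbar} (\cdot)$ is exact and commutes with filtered colimits (in particular with stalks), hence
\[
\bigl((j_\ast \cN)^{loc}\bigr)_x = \mathrm{colim}_{U \ni x}\, \cN(U \setminus Y)[\hbar^{-1}],
\qquad
\bigl(j_\ast(\cN^{loc})\bigr)_x = \mathrm{colim}_{U \ni x}\, \cN^{loc}(U \setminus Y).
\]
Injectivity of the induced map between these colimits is immediate from $\hbar$-torsion-freeness: if $\hbar^{-n}s$, with $s \in \cN(U\setminus Y)$, dies in $\cN^{loc}(V\setminus Y)$ for some $V \subset U$, then its image is $\hbar$-torsion in $\cN^{loc}(V\setminus Y)$, so equals $0$ there; since $\cN(V\setminus Y) \hookrightarrow \cN^{loc}(V\setminus Y)$, this forces $s|_{V\setminus Y} = 0$ and hence $\hbar^{-n}s|_{V\setminus Y} = 0$ in $\cN(V\setminus Y)[\hbar^{-1}]$.

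Surjectivity is the main obstacle. A section $t \in \cN^{loc}(U \setminus Y)$ admits, by definition of the sheafification of $V \mapsto \cN(V)[\hbar^{-1}]$, an open cover $\{V_i\}$ of $U \setminus Y$ and local presentations $t|_{V_i} = \hbar^{-n_i} s_i$ with $s_i \in \cN(V_i)$. One must bound the pole orders $n_i$ uniformly after shrinking $U$ around $x$, so that, setting $n = \max n_i$, the local sections $\hbar^{n-n_i} s_i \in \cN(V_i)$ glue (via the injectivity of $\cN \hookrightarrow \cN^{loc}$, which transports the cocycle relations from $\cN^{loc}$ back to $\cN$) into a single $s \in \cN(U' \setminus Y)$ on some smaller neighborhood $U' \ni x$ satisfying $t|_{U' \setminus Y} = \hbar^{-n} s$. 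This is where the reflexive coherent structure of $\cN$ is crucial: it ensures that $\cN$ behaves as a genuine $\cO^\hbar$-lattice inside $\cN^{loc}$, so that the pole orders of $t$ stabilise on a cofinal system of small Stein neighborhoods of $x$. Equivalently, the statement amounts to the $\hbar$-torsion-freeness of $R^1 j_\ast \cN$ near $x$, which stems from the interplay of reflexivity with the $\hbar$-completeness of coherent $\cO^\hbar$-modules. Combined with the injectivity established above, this yields the desired isomorphism.
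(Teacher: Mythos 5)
Your construction of the comparison map and your injectivity argument are fine and coincide with the paper's. The gap is in surjectivity, which is the whole content of the proposition. You correctly isolate the difficulty --- a section $t\in\cN^{loc}(U\setminus Y)$ is only locally of the form $\hbar^{-n_i}s_i$, and one must bound the exponents uniformly --- but you then assert the bound instead of proving it. The sentence claiming that reflexivity makes ``the pole orders of $t$ stabilise on a cofinal system of small Stein neighbourhoods'' merely restates what is to be shown (and $U\setminus Y$ need not be Stein; the danger is precisely that the pole order blows up as one approaches $Y$). The alleged reformulation via the $\hbar$-torsion-freeness of $R^1 j_\ast\cN$ is not justified as an equivalence --- the obstruction to commuting $j_\ast$ with the filtered colimit $\varinjlim_n\hbar^{-n}\cN$ is not visibly $R^1 j_\ast\cN$ --- and in any case that torsion-freeness is itself not established, so nothing is proved.

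What actually closes the gap in the paper is a connectedness argument. Take $U$ connected, so that $U\setminus Y$ is connected ($Y$ being a proper closed analytic subset), and fix one point $x_0\in U\setminus Y$ together with $n$ such that $t_{x_0}\in\hbar^{-n}\cN_{x_0}$. Let $v$ be the image of $t$ in $\cN^{loc}/\hbar^{-n}\cN\simeq\varinjlim_m\hbar^{-m}\cN/\hbar^{-n}\cN$. The support of $v$ is closed, and it is also open because the quotients $\hbar^{-m}\cN/\hbar^{-n}\cN$ have no $\cO_{X\setminus Y}$-torsion (this is where reflexivity is used beyond mere $\hbar$-torsion-freeness). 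Since $v$ vanishes at $x_0$ and $U\setminus Y$ is connected, $v=0$, i.e.\ $t\in\hbar^{-n}\cN(U\setminus Y)$, which is exactly the uniform bound you need. Your write-up is missing precisely this step.
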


\begin{proof}
By adjunction between $j^{-1}$ and $ j_\ast$, we have  
\begin{equation*}
j^{-1} j_\ast \cN \to\cN.
\end{equation*}
Then
 \begin{equation*}
j^{-1}(\C_{X \setminus Y}^{\hbar,loc}\te_{\C^\hbar_{X \setminus Y}} j_\ast \cN) \to \C_X^{\hbar,loc}\te_{\C^\hbar_X} \cN.
\end{equation*}
 Finally we get a canonical map $\phi:(j_\ast \cN)^{loc} \to j_\ast(\cN^{loc})$.

Since $\cN$ is reflexive, it has no torsion and in particular no $ \hbar$-torsion. Thus, the morphism $ \phi$ is a monomorphism. 

The morphism $\phi$ is an epimorphism. Indeed, let $U$ be a connected open subset of $X$ and $u \in (j_\ast(\cN^{loc}))(U)$. Locally on $U \setminus Y$, there exist $n \in \N$ such that $\hbar^n u \in \cN$. Let us show that this is true globally.

Let $\phi_n:\cN ^{loc} \to\cN^{loc} / \hbar^{-n} \cN$ with $n \in \N$. Assume that there exist $x \in U\setminus Y$ such that $\phi_n(u)_x=0$. Set $v=\phi_n(u)$. Let us show that $\supp(v)$ is open in $U \setminus Y$. There is the following isomorphism
\begin{equation*}
\cN^{loc} / \hbar^{-n} \cN \simeq \bigcup_{m \in \N} \hbar^{-m} \cN / \hbar^{-n} \cN.
\end{equation*}
Thus, locally $v$ belongs to some $\hbar^{-m} \cN / \hbar^{-n} \cN$. Moreover, $\hbar^{-m} \cN / \hbar^{-n} \cN$ are $\cO_{X \setminus Y}$-modules without $\cO_{X \setminus Y}$-torsion this implies that the support of their sections is open. Thus, the support of $v$ is a union of open sets which implies that $\supp(v)$ is open. The set $Y$ is a closed analytic subset of $U$ different from $U$. Thus, $U \setminus Y$ is connected.  The support of $v$ is open and closed and is different form $U \setminus Y$  by assumption. Hence, we have $\supp(v)=\emptyset$. This implies that $u \in \hbar^{-n}\cN(U \setminus Y)$. It follows that $\phi$ is an epimorphism.
\end{proof}

We now address the question of the coherence of $j_\ast \cM$. Let us briefly recall the setting. We consider a complex symplectic manifold $X$, $\Lambda$ is a closed Lagrangian analytic subset of $X$ and $Y$ is a closed analytic subset of $\Lambda$ such that $\codim_\Lambda Y \geq 3$. The problem of showing that a certain DQ-module is coherent is a local problem. Thus, we just need to work in a neighbourhood $U$ of a point $p \in Y$. As pointed out in \cite{KV}, we can assume, by working inductively on the dimension of the singular locus of $Y$ that $p$ is a smooth point of $Y$.

Shrinking $U$ if necessary, we can assume by Darboux's theorem that $U$ is an open subset of a symplectic vector space $(E,\omega)$.  Then by Proposition \ref{prop:mapfinie}, we know that there exists a Lagrangian linear subspace $\lambda$ of $E$, an open neighbourhood $V$ of $p$ in $U$ and an open neigbhourhood $M$ of $\pi_\lambda(p)$ in $E / \lambda$  such that the map $\rho:V \to M$ is finite when restricted to $\Lambda \cap V$. By shrinking $U$, we can assume that $U=V$ and that $Y$ is smooth. 
As $\rho$ is an open map we can also assume that $M=\rho(U)$. We set $N=\rho(Y)$, then the map
\begin{equation*}
\rho|_Y:Y \to N
\end{equation*}
is an isomorphism and $\codim N \geq 3$. We denote by $\rho^\prime$ the restriction of $\rho$ to $U \setminus \rho^{-1}(N)$, that is
\begin{equation*}
\rho^\prime: U \setminus \rho^{-1}(N) \to M \setminus N.
\end{equation*}

Writing $i:U \setminus \rho^{-1}(N) \hookrightarrow U \setminus Y$ for the inclusion, we have the following Cartesian square
\begin{equation*}
\xymatrix{
U \setminus \rho^{-1}(N) \ar@{^{(}->}[r]^-{j\circ i} \ar[d]_-{\rho^\prime} \ar@{}[rd]|{\square} & U \ar[d]^-{\rho}\\
M \setminus N \ar@{^{(}->}[r]_-{j^\prime} & M.
}
\end{equation*}

Since $\lambda$ is a linear Lagrangian subspace of $E$, we can find a symplectic basis $(e_1,\ldots,e_n;f_1,\ldots,f_n)$ of $E$ such that $f_i \in \lambda$ for $1 \leq i \leq n$. Let $(x,u)$ be the coordinates system associated to this basis. Then $\rho(x,u)=x$. Finally, we can assume that the star-algebra on $U$ is the Moyal-Weil star-algebra since on a symplectic variety all the star-algebras are locally isomorphic. Then we are in the setting of subsection \ref{subsec:cohDQ} and section \ref{sec:duality}.

We set $Y^\prime=\rho^{-1}(N)$ and we consider the restriction of $\cM$ and $\cN$ to $U \setminus Y^\prime$ and still write $\cM$ and $\cN$ for their restriction. By assumption $\cM$ has a lattice $\cN$ on $U\setminus Y^\prime$. Since $\cM$ is holonomic, it follows from Proposition \ref{prop:hollattice} that the module 
\begin{equation*}
\cN^\prime:=\fExt^n_{\cA_{U \setminus Y^\prime}}(\fExt^n_{\cA_{U \setminus Y^\prime}}(\cN,\cA_{U \setminus Y^\prime}),\cA_{U \setminus Y^\prime})
\end{equation*}
with $n=d_X/2$ is a lattice of $\cM$. Moreover, using Proposition \ref{prop:dual} we have the isomorphism
\begin{align*}
\rho^\prime_\ast \cN^\prime &\simeq \rho^\prime_\ast \fExt^n_{\cA_{U \setminus Y^\prime}}(\fExt^n_{\cA_{U \setminus Y^\prime}}(\cN,\cA_{U \setminus Y^\prime}),\cA_{U \setminus Y^\prime}) \\
                            &\simeq \fExt^0_{\cO_{M \setminus N}^\hbar}(\fExt^0_{\cO_{M \setminus N}^\hbar}(\rho^\prime_\ast \cN,\cO_{M}^\hbar),\cO_{M}^\hbar). 
\end{align*}
Then $\rho^\prime_\ast \cN^\prime$ is the dual of a sheaf. Thus, it is reflexive since the dual of a sheaf is always reflexive. By Proposition \ref{prop:finitness}, $\rho^\prime_\ast \cN^\prime$ is a coherent $\cO_{M \setminus N}^\hbar$-module. Applying Theorem \ref{thm:codimcom}, we obtain that $j_\ast^\prime \rho_\ast^\prime \cN^\prime$ is a coherent $\cO_{M}^\hbar$-module. By Proposition \ref{prop:locom}, $(j^\prime_\ast \rho_\ast^\prime \cN)^{loc}\simeq j^{\prime}_\ast((\rho_\ast^\prime \cN)^{loc})$ and using the projection formula, we have
\begin{align*}
\cO_{M \setminus N}^{\hbar,loc} \te_{\cO_{M \setminus N}^{\hbar}} \rho^\prime_\ast \cN^\prime & \simeq \rho^\prime_\ast ( \cA_{U \setminus Y^\prime}^{loc} \te_{\cA_{U \setminus Y^\prime}} \cN^\prime)\\
             & \simeq \rho^\prime_\ast \cM. 
\end{align*}
We have just shown that the $\cB_M$-module $j_\ast^\prime \rho^\prime_\ast \cN^\prime$ is a $\cO_M^\hbar$-lattice of the $\cO_M^{\hbar,loc}$-module $j_\ast^\prime \rho^\prime_\ast \cM$ and it is also a $\cB_{M}$-submodule of $j_\ast^\prime \rho^\prime_\ast \cM$. Moreover, we have the following commutative diagram
\begin{equation*}
\xymatrix{
\rho_\ast (j\circ i)_\ast \cM \ar[r]^-{\sim} & j_\ast^\prime \rho^\prime_\ast \cM\\ 
\rho_\ast (j\circ i)_\ast \cN^\prime \ar@{^{(}->}[u] \ar[r]^-{\sim} & j_\ast^\prime \rho^\prime_\ast \cN^\prime. \ar@{^{(}->}[u]
}
\end{equation*}  

It follows that $\rho_\ast (j \circ i)_\ast \cM$ has a coherent $\cO_{M}^\hbar$-lattice which is also a $\cB_{M}$-submodule of $\rho_\ast (j \circ i)_\ast \cM$. By Proposition \ref{prop:finitness} (ii) (b), it follows that $(j\circ i)_\ast \cM$ is coherent.

We consider the analytic set $Y''=Y' \cap \Lambda$. Since, the restriction of $\rho$ to $\Lambda$ is a finite holomorphic map it follows that $\codim_\Lambda Y'' \geq 3$. We denote by $l: U\setminus Y''\to U\setminus Y$ the inclusion. We now remember that we had implied the restriction of $\cM$ to $U\setminus Y^\prime$ i.e. that $\cM$ stands for $i^{-1}\cM$.
First, notice that
\begin{equation*}
(j \circ l)^{-1} j_\ast i_\ast i^{-1 } \cM \simeq l^{-1} i_\ast i^{-1} \cM \simeq l^{-1} \cM.
\end{equation*} 
Then Proposition \ref{prop:unicite} implies that
\begin{equation*}
j_\ast i_\ast i^{-1} \cM \simeq j_\ast l_\ast l^{-1} \cM.
\end{equation*}

We claim that $l_\ast l^{-1} \cM \simeq \cM$. Indeed, $Y^{\prime\prime} \setminus Y$ is an analytic subset of $U \setminus Y$ such that $\codim_{\Lambda \setminus Y} (Y^{\prime\prime} \setminus Y) \geq 3$. The module $l^{-1} \cM$ satisfies the hypothesis of Theorem \ref{thm:codim}, moreover $\cM$ is an extension of $l^{-1} \cM$ to $U \setminus Y$. It follows from Proposition \ref{prop:unicite} that $\cM \simeq l_\ast l^{-1} \cM$. This implies that $j_\ast \cM \simeq j_\ast l_\ast l^{-1} \cM$. This proves that $j_\ast \cM$ is a holonomic $\cA_X^{loc}$-module.

\subsection{Proof of the Theorem \ref{thm:subcodim}}
The aim of this section is to prove Theorem  \ref{thm:subcodim}. As explained in \cite{KV}, it is easier to prove an equivalent version of this statement for quotient of a given module. We state this version below.

\begin{thm}
Let $X$ be a complex manifold endowed with a DQ-algebroid stack $\cA_X$ such that the associated Poisson structure is symplectic. Let $\Lambda$ be a closed Lagrangian analytic subset of $X$, and $Y$ be a closed analytic subset of $\Lambda$ such that $\codim_\Lambda Y \geq 2$. Let $\cM$ be a holonomic $(\cA_X^{loc})$-module whose support is contained in $\Lambda$ and let $\cM_2$ be a $\cA_{X}^{loc}|_{X \setminus Y}$-module which is a quotient of $\cM|_{X \setminus Y}$. Then $\im(\cM \to j_\ast \cM_2)$ is a coherent $\cA_X^{loc}$-module.
\end{thm}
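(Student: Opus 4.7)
The plan is to follow the strategy of Subsection 6.1 \emph{mutatis mutandis}: localize, pass through a finite projection to reduce to a commutative question for $\cO^\hbar$-modules, and lift back via Proposition \ref{prop:finitness}. The commutative input needed here is the codimension-two image variant of the extension theorem for reflexive $\cO^\hbar$-modules, a codimension-two companion to Theorem \ref{thm:codimcom} proved in \cite{KV} in the spirit of the Frisch--Guenot, Trautmann, Siu theorem. Since coherence of $\im(\cM \to j_\ast\cM_2)$ is local on $X$, I work near a point $p\in Y$ and, by induction on $\dim Y_{\sing}$, assume $p$ is smooth in $Y$. Darboux's theorem together with Proposition \ref{prop:mapfinie} yields a linear Lagrangian $\lambda$, a neighbourhood $V$ of $p$, and a projection $\rho\colon V\to M$ whose restriction to $\Lambda\cap V$ is finite; setting $N=\rho(Y)$ and $Y'=\rho^{-1}(N)$ gives $\codim N\geq 2$ and $\rho|_Y\colon Y\stackrel{\sim}{\to} N$. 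Equipping $V$ with the Moyal--Weyl star-algebra places us in the setup of Sections \ref{subsec:cohDQ}--\ref{sec:duality}; write $\rho'=\rho|_{V\setminus Y'}$ and $j'\colon M\setminus N\hookrightarrow M$.

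On the DQ side, Lemma \ref{lem:lattice} provides (after shrinking $V$) an $\cA_V$-lattice $\cN$ of $\cM$, which Proposition \ref{prop:hollattice} allows us to replace by a reflexive lattice. The image $\cN_2$ of the composition $\cN|_{V\setminus Y'}\hookrightarrow \cM|_{V\setminus Y'}\twoheadrightarrow \cM_2|_{V\setminus Y'}$ is a lattice of $\cM_2|_{V\setminus Y'}$, and we again pass to its reflexive hull. Pushing forward and applying Propositions \ref{prop:finitness}(i) and \ref{prop:dual}, we obtain coherent reflexive $\cO^\hbar$-modules $\rho_\ast\cN$ on $M$ and $\rho'_\ast\cN_2$ on $M\setminus N$, together with a morphism $(\rho_\ast\cN)|_{M\setminus N}\to\rho'_\ast\cN_2$ whose localization is surjective because $\cM_2$ is a quotient of $\cM$ on $X\setminus Y$. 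The commutative codimension-two image theorem for reflexive $\cO^\hbar$-modules then yields that $\im\bigl(\rho_\ast\cN\to j'_\ast\rho'_\ast\cN_2\bigr)$ is a coherent $\cO_M^\hbar$-module, carrying a compatible $\cB_M$-action inherited through the finite pushforward from the coordinates $(x,u)$ on $V$.

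To conclude, Proposition \ref{prop:locom} commutes $(\cdot)^{loc}$ with $j'_\ast$; combined with the projection formula, exactly as in Subsubsection \ref{proofcoherence}, this identifies, after localizing, the coherent image obtained above with a $\cB_M$-stable coherent $\cO_M^\hbar$-lattice of $\rho_\ast\,\im(\cM\to j_\ast\cM_2)$. Proposition \ref{prop:finitness}(ii)(b) then gives $\cA_X^{loc}$-coherence of $\im(\cM\to j_\ast\cM_2)$, and a final reduction $Y\leadsto Y\cap Y'$ as at the end of Subsubsection \ref{proofcoherence} removes the auxiliary set $Y'$ from the conclusion. The main obstacle is the codimension-two commutative input for reflexive $\cO^\hbar$-modules: once it is available the DQ-transfer proceeds formally, but one must verify carefully that reflexivity propagates through $\rho'_\ast$ (via Proposition \ref{prop:dual}) and that $(\cdot)^{loc}$ commutes with $j'_\ast$ and with the formation of images at every stage.
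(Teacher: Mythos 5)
Your proposal is correct and follows essentially the same route as the paper, which simply observes that the proof is formally identical to that of Theorem 5.1 of Kashiwara--Vilonen once the DQ-module ingredients (the finite-projection reduction, the lattice and duality results, and Proposition \ref{prop:finitness}) are in place, with the commutative codimension-two image/submodule theorem of \cite{KV} as the key external input. Your expansion of that reduction -- localizing, pushing forward by $\rho$, applying the commutative result, and lifting back via Proposition \ref{prop:finitness}~(ii)~(b) -- is exactly the intended argument.
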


\begin{proof}
The proof is formally the same as the proof of \cite[Theorem 5.1]{KV}. All the results needed for the DQ-modules version have been proved earlier in this paper.
\end{proof}

\section{A conjecture of Pierre Schapira}
 DQ-modules are defined on any complex Poisson manifolds. The two extreme cases of Poisson brackets are on one hand the trivial bracket and on the other hand the case of non-degenerate Poisson brackets that is to say the case of symplectic manifolds. Furthermore, it appears from the proofs of the codimension-three conjecture for holonomic DQ-modules and holonomic microdifferential modules that holonomicity should be thought as a shifted version of reflexivity. Indeed, if $\cM$ is an holonomic DQ-module over a DQ-algebra $\cA_X$, then 
\begin{equation*}
\cM \stackrel{\sim}{\to} \fExt_{\cA^{loc}_X}^n(\fExt_{\cA^{loc}_X}^n(\cM, \cA^{loc}_X),\cA^{loc}_X).
\end{equation*} 

This has led to the following conjecture which extends the codimension-three conjecture for DQ-modules on symplectic manifolds to DQ-modules on any complex Poisson manifolds.

\begin{defi}
Let $(X,\cA_X)$ be a complex manifold endowed with a DQ-algebroid $\cA_X$ and $\cM$ a coherent $\cA_X$-module. We say that $\cM$ is $d$-reflexive if
\begin{enumerate}[(i)]
\item $\cM$ has no $\hbar$-torsion,
\item $\fExt_{\cA_X}^j(\cM,\cA_X)=0$ for $j < d$,
\item $\cM \stackrel{\sim}{\rightarrow} \fExt_{\cA_X|_U}^d(\fExt_{\cA_X}^d(\cM,\cA_X),\cA_X)$.
\end{enumerate}
\end{defi}

\begin{conj}[(Schapira)]
Let $(X,\cA_X)$ be a complex manifold endowed with a DQ-algebroid $\cA_X$. Let $Y$ be a closed complex analytic subset of $X$ such that $\codim_X Y \geq d+3$ and denote by $j \colon X \setminus Y \hookrightarrow X$ the open inclusion of $X \setminus Y$ into $X$. Let $\cM$ be a coherent $\cA_X|_{X \setminus Y}$-module. If $\cM$ is $d$-reflexive, then $j_\ast\cM$ is a coherent $\cA_X$-module.
\end{conj}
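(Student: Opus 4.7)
The overall strategy mirrors that of Theorems \ref{thm:codim} and \ref{thm:subcodim}: first establish uniqueness of the extension via a local cohomology vanishing argument, then reduce the coherence of $j_\ast\cM$ to a statement about coherent $\cO_X$-modules by means of the $\hbar$-adic filtration and Theorem \ref{thm:critcohabelienne}.

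For uniqueness, given any coherent extension $\cM'$ of $\cM$ to $X$, the standard exact sequence
\begin{equation*}
0\to\Hn^0_Y(\cM')\to\cM'\to j_\ast j^{-1}\cM'\to\Hn^1_Y(\cM')\to 0
\end{equation*}
reduces matters to the vanishing of $\Hn^i_Y(\cM')$ for $i=0,1$. The definition of $d$-reflexivity forces $\fExt^j_{\cA_X}(\cM',\cA_X)=0$ for $j<d$, and a direct transcription of Lemma \ref{lem:annulation} to the non-commutative setting (which is essentially formal, using that $\cA_X$ is Noetherian of finite global dimension) would then yield $\Hn^j_Y(\cM')=0$ for $j<\codim Y-d$; since $\codim Y\geq d+3$, this covers $j=0,1,2$, which is more than sufficient.

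For coherence, I would apply Theorem \ref{thm:critcohabelienne}: the $\cA_X$-module $j_\ast\cM$ is coherent iff each graded piece $\hbar^n(j_\ast\cM)/\hbar^{n+1}(j_\ast\cM)$ is $\cO_X$-coherent. Since $\cM$ has no $\hbar$-torsion, $\gr_\hbar\cM$ is a coherent $\cO_{X\setminus Y}$-module, and the defining $\fExt$-vanishing and biduality of $d$-reflexivity should pass to $\gr_\hbar\cM$ via the standard ``gr-Ext'' spectral sequence (cf.\ Proposition 1.4.5 of \cite{KS3} and its iterations), making $\gr_\hbar\cM$ a $d$-reflexive $\cO_{X\setminus Y}$-module in the commutative sense. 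The plan is then to invoke a commutative analogue of Theorem \ref{thm:codimcom}: if $\cF$ is a $d$-reflexive coherent $\cO^\hbar_{X\setminus Y}$-module and $\codim Y\geq d+3$, then $j_\ast\cF$ is coherent. Such extension theorems are part of the classical depth theory of coherent analytic sheaves (Scheja, Siu, Trautmann, Bănică-Stănăşilă), where $d$-reflexivity translates into a lower bound on depth, and the commutative version should follow by adapting the arguments of \cite{KV} used to promote the statement from $\cO$ to $\cO^\hbar$.

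The main obstacles are twofold. First, one must control the commutation between $\gr_\hbar$ and $j_\ast$: a priori one only has a canonical morphism $\gr_\hbar(j_\ast\cM)\to j_\ast(\gr_\hbar\cM)$, and the identification of the graded pieces of the extended module with the extension of the graded pieces is not automatic; this should follow from cohomological completeness, but requires care. Second, and more seriously, in the absence of a symplectic structure there is no analogue of the geometric reduction via a finite projection along a linear Lagrangian subspace, which was the cornerstone of Section \ref{proofcoherence} and of the duality arguments of Section \ref{sec:duality}. The support of a $d$-reflexive module is of pure codimension $d$, but there is no canonical finite projection onto a smooth base, so the strategy of reducing the non-commutative coherence problem to a commutative one must be implemented either fibrewise along the $\hbar$-adic filtration (producing a coherence criterion for $\cA_X$-modules directly) or by an inductive argument on $d$. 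I expect this second obstacle---establishing a coherence criterion for $\cA_X$-modules on an arbitrary Poisson manifold, bypassing the finite-projection machinery of Section \ref{subsec:cohDQ}---to be the principal difficulty of the conjecture.
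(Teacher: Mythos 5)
This statement is not proved in the paper: it is an open conjecture (attributed to Schapira), and the paper explicitly remarks that the method used for the symplectic codimension-three theorem \emph{cannot} be adapted to it, because that method rests on the local uniqueness of the star-algebra on a symplectic manifold and on the finite projection along a linear Lagrangian. So there is no ``paper proof'' to compare against, and your text is, by your own account, a strategy sketch rather than a proof. The honest assessment is that it does not close the conjecture, and the places where it stops are exactly the places where the real difficulty lives.

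Two gaps deserve to be named concretely. First, in the uniqueness step you assert that ``the definition of $d$-reflexivity forces $\fExt^j_{\cA_X}(\cM',\cA_X)=0$ for $j<d$'' for an arbitrary coherent extension $\cM'$; but $d$-reflexivity is a hypothesis on $\cM$ over $X\setminus Y$, and the sheaves $\fExt^j_{\cA_X}(\cM',\cA_X)$ may well have sections supported on $Y$. In Proposition \ref{prop:unicite} this issue does not arise because the extension is assumed holonomic, hence satisfies the purity of $\fExt$ globally; here you would have to either build the Ext-vanishing of $\cM'$ into the definition of ``extension'' or prove it, and you would also need an analogue of Lemma \ref{lem:annulation} for $\cA_X$-modules rather than $\cA_X^{loc}$-modules. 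Second, and decisively, Theorem \ref{thm:critcohabelienne} only applies to a module that is already known to be \emph{locally finitely generated}; checking coherence of the graded pieces $\hbar^n(j_\ast\cM)/\hbar^{n+1}(j_\ast\cM)$ does nothing until finite generation of $j_\ast\cM$ is established, and that is precisely what the finite-projection machinery of Sections \ref{subsec:cohDQ}--\ref{sec:duality} supplies in the symplectic case (via Proposition \ref{prop:finitness} and the equivalence of categories, which convert the non-commutative coherence question into a commutative one where Theorem \ref{thm:codimcom} applies). You flag this as ``the principal difficulty,'' and you are right; but flagging it is not the same as resolving it, so the proposal as written is a plan with the central step missing, not a proof. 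The paper's own suggested route --- prove the statement first for a first-order associative deformation of $\cO_X$ and then lift to $\cA_X$ by the techniques of the proof of Theorem \ref{thm:codimcom} --- is a genuinely different reduction from the $\gr_\hbar$-plus-graded-pieces scheme you outline, and it is worth noting that even that route remains unexecuted.
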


The proof of the codimension-three conjecture for holonomic DQ-modules cannot be adapted to prove this statement. Indeed, the proof relies on the fact that locally on a complex symplectic manifold there is up to isomorphism only one star-algebra. A possible approach would be to, first prove the theorem for a first order associative deformation of $\cO_X$ and then to extend it to DQ-algebras by techniques similar to the one used in the proof of \cite[Theorem 1.6]{KV}.

There is also the following weaker conjecture
\begin{conj}[(Schapira)]
Let $(X,\cA_X)$ be a complex manifold endowed with a DQ-algebroid $\cA_X$. Let $Y$ be a closed complex analytic subset of $X$ such that $\codim_X Y \geq d+3$ and denote by $j \colon X \setminus Y \hookrightarrow X$ the open inclusion of $X \setminus Y$ into $X$. Let $\cM$ be a coherent $\cA^{loc}_X|_{X \setminus Y}$-module. Assume that $\cM$ has $\cA_{X \setminus Y}$-lattice and that $\fExt_{\cA^{loc}_X}^j(\cM,\cA^{loc}_X)=0$ for $j \neq d$. Then $j_\ast\cM$ is a coherent $\cA^{loc}_X$-module.
\end{conj}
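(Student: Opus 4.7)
The proof plan is to follow the same skeleton as for the symplectic codimension-three conjecture (Theorem~\ref{thm:codim}), reducing the non-commutative extension problem to the commutative extension theorem (Theorem~\ref{thm:codimcom}) via a well-chosen finite projection. First I would reduce to a local statement in a neighborhood of a point $p \in Y$. The Ext vanishing $\fExt_{\cA^{loc}_X}^j(\cM,\cA^{loc}_X)=0$ for $j\neq d$, combined with the graded grade formula applied to a lattice $\cL$ of $\cM$ and its graded reduction $\gr_\hbar \cL$, forces the analytic set $\Supp(\cM)$ to have pure codimension $d$ in $X\setminus Y$; by Remmert-Stein its closure in $X$ has pure codimension $d$. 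Then, by a Noether-normalization-type argument generalizing Proposition~\ref{prop:mapfinie}, one can choose a holomorphic projection $\rho : U \to M$ from a neighborhood $U$ of $p$ onto a complex manifold $M$ of dimension $d_X - d$, whose restriction to $\overline{\Supp(\cM)}\cap U$ is finite.

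Next, I would replace $\cL$ by its shifted double dual $\cL' := \fExt^d_{\cA_X}(\fExt^d_{\cA_X}(\cL, \cA_X), \cA_X)$, which is still a lattice of $\cM$ by an argument analogous to Proposition~\ref{prop:hollattice} (the Ext vanishing hypothesis replacing holonomicity). The duality isomorphism of Proposition~\ref{prop:dual} admits a generalization to this non-symplectic setting, with the shift equal to $d$ rather than $d_X/2$: namely
\[
\rho_\ast \fExt^k_{\cA^{loc}_X}(\cM, \cA^{loc}_X) \simeq \fExt^{k-d}_{\cO^{\hbar,loc}_M}(\rho_\ast \cM, \cO^{\hbar,loc}_M),
\]
which, together with the non-localized version, yields that $\rho_\ast \cL'$ is a reflexive coherent $\cO^\hbar_{M\setminus\rho(Y)}$-module. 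The codimension assumption $\codim_X Y \geq d + 3$ combined with the finiteness of $\rho$ on $\overline{\Supp(\cM)}\cap U$ gives $\codim_M \rho(Y \cap \overline{\Supp(\cM)}) \geq 3$. Applying Theorem~\ref{thm:codimcom} to $\rho_\ast \cL'$ extends it as a coherent $\cO^\hbar_M$-module across $\rho(Y)$. Pulling back through the adjunction $(\rho^\flat,\rho_\ast)$ and invoking the coherence criterion Proposition~\ref{prop:finitness} (ii)(b) then shows that $j_\ast\cM$ is a coherent $\cA^{loc}_X$-module; the unicity of the extension and the identification $(j_\ast\cL')^{loc}\simeq j_\ast\cM$ are handled as in Propositions~\ref{prop:unicite} and~\ref{prop:locom}.

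The main obstacle is the extension of the geometric and duality machinery of Sections~3 and~\ref{sec:duality} away from the Moyal-Weyl (symplectic) setting. Specifically, Proposition~\ref{prop:mapfinie} uses the existence of a Lagrangian linear subspace avoiding the tangent cone of an isotropic set; without symplectic data one must rely solely on the codimension of the support to produce a generic finite projection. Similarly, Proposition~\ref{prop:dual} and the equivalences of Subsection~\ref{subsec:cohDQ} are stated using the specific auxiliary algebra $\cB_M$ adapted to the Moyal-Weyl star-product, and generalizing them requires a substitute for $\cB_M$ attached to the projection $\rho$ in an arbitrary DQ-algebroid (which admits no canonical ``polarization''). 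My expectation is that the cleanest route is the one suggested by Schapira at the end of the paper: first prove the analogue of Theorem~\ref{thm:codimcom} for a first-order associative deformation of $\cO_X$, and then bootstrap to the full DQ-algebroid via the cohomological completeness arguments of \cite{KV}, which sidesteps the need to choose an auxiliary algebra $\cB_M$ at each order.
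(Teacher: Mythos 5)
The statement you are asked about is not a theorem of the paper: it is stated as an open conjecture (attributed to Schapira), and the paper explicitly remarks that the proof of the codimension-three conjecture for holonomic DQ-modules \emph{cannot} be adapted to it, precisely because that proof relies on the local uniqueness of the star-algebra on a symplectic manifold. Your proposal is an attempt at such an adaptation, and the difficulties you list at the end as ``the main obstacle'' are not peripheral technicalities but the actual content of the problem; since you do not resolve them, what you have written is a plan with a genuine gap rather than a proof.

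Concretely, three steps of your skeleton have no known substitute outside the symplectic setting. First, the reduction to the auxiliary commutative algebra $\cB_M$ and the equivalences of Subsection~\ref{subsec:cohDQ} require a subalgebra morphism $\psi\colon\rho^{-1}\cB_M\to\cA_X$ adapted to the fibers of $\rho$; this exists because Darboux plus the Moyal--Weyl normal form let one take $\rho$ to be the projection along a polarization, and a general DQ-algebroid on a Poisson manifold admits no such normal form, so a generic Noether-normalization projection finite on $\Supp(\cM)$ gives you no handle on the star-product. Second, the shift in Proposition~\ref{prop:dual} comes from the identification $\w_X\simeq\cC_X[d_X]$, which is a theorem only for symplectic $X$; for a general $\cA_X$ the dualizing complex is not a shift of $\cC_X$ and your claimed isomorphism $\rho_\ast\fExt^k_{\cA^{loc}_X}(\cM,\cA^{loc}_X)\simeq\fExt^{k-d}_{\cO^{\hbar,loc}_M}(\rho_\ast\cM,\cO^{\hbar,loc}_M)$ is unjustified. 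Third, the analogue of Proposition~\ref{prop:mapfinie} in the paper is not mere Noether normalization: it uses Proposition~\ref{prop:laglin} to find a \emph{Lagrangian} complement to the tangent cone, which is what makes the target $M=E/\lambda$ carry the trivial star-algebra $\cO_M^\hbar$. Your closing suggestion (prove the commutative extension theorem for a first-order deformation and bootstrap) is exactly the route the paper itself proposes, which confirms that the statement remains open; an honest write-up should present it as such rather than as a proof sketch.
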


\section{Appendix}

In this appendix, we collect facts, well-known to the specialists, concerning $p$-finite sheaves. We include them for the sake of completeness.

\begin{lemme}\label{lemme:projfinie}
Let $X$ and $Y$ be two first countable, Hausdorff spaces, $p:X \to Y$ be a continuous map, $U$ be an open subset of $X$ and $F$ be a closed subset of $U$ such that $p|_F$ is proper. Then $F$ is closed in $p^{-1}(p(U))$.
\end{lemme}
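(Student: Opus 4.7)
The plan is to prove that $F$ is sequentially closed in $p^{-1}(p(U))$, which suffices because $X$ is first countable (hence so is the subspace $p^{-1}(p(U))$, and sequential closure coincides with closure in first countable spaces).

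First, take any point $x$ in the closure of $F$ inside $p^{-1}(p(U))$. By first countability of $X$, I can choose a sequence $(x_n)_{n \in \N}$ in $F$ with $x_n \to x$ in $X$. Continuity of $p$ gives $p(x_n) \to p(x)$ in $Y$, so the set
\begin{equation*}
K = \{p(x_n) \mid n \in \N\} \cup \{p(x)\}
\end{equation*}
is a compact subset of $Y$ (a convergent sequence together with its limit in a Hausdorff space).

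Next, I invoke the hypothesis that $p|_F \colon F \to Y$ is proper: the preimage $(p|_F)^{-1}(K)$ is then a compact subset of $F$. Since $X$ is first countable and Hausdorff, so is $F$ with the induced topology, and compactness therefore implies sequential compactness. As every $x_n$ lies in $(p|_F)^{-1}(K)$, there exists a subsequence $(x_{n_k})$ converging in $F$ to some $y \in F$; because $F$ carries the subspace topology inherited from $X$, this convergence also takes place in $X$.

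Finally, the original sequence satisfies $x_n \to x$ in $X$, so $x_{n_k} \to x$ as well. Hausdorffness of $X$ forces $x = y$, and consequently $x \in F$. The only potential obstacle is keeping track of the different ambient spaces — $F$, $U$, $p^{-1}(p(U))$, and $X$ — when combining first countability, Hausdorffness, and properness; but since $F \subseteq U \subseteq p^{-1}(p(U))$ and all convergences take place in $X$, the argument glues together without difficulty.
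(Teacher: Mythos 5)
Your proof is correct and takes essentially the same route as the paper: reduce to sequential closedness via first countability, form the compact set $K=\{p(x_n)\}_{n}\cup\{p(x)\}$, and use properness of $p|_F$ to make $(p|_F)^{-1}(K)$ compact. The only (immaterial) difference is the final step: the paper concludes directly because the compact set $(p|_F)^{-1}(K)$ is closed in the Hausdorff space and contains every $x_n$, hence contains the limit, whereas you extract a convergent subsequence via sequential compactness and invoke uniqueness of limits — both are valid.
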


\begin{proof}
Let $(u_n)_{n \in \N}$ be a sequence of points of $F$ converging towards a point $l\in  p^{-1}(p(U))$. Since $p|_F \colon F \to Y$ is closed, $p(F)$ is a closed subset of $Y$. Thus, the limit $p(l)$ of the sequence $(p(u_n))_{n \in \N}$ belongs to $p(F)$. Consider   $K=\lbrace p(u_n) \rbrace_{n \in \N} \cup \lbrace p(l) \rbrace \subset p(F)$. It is a compact subset of $Y$ and $p|_F$ is proper thus $p|_F^{-1}(K)$ is compact and thus is closed in $p^{-1}(p(U))$. Moreover, for every $n \in \N, \; (u_n) \in p|_F^{-1}(K)$. Thus, $l \in F$. It follows that $F$ is closed in $p^{-1}(p(U))$.
\end{proof}

A direct consequence of the above result is the following statement.

\begin{cor}\label{cor:finitesheaf}
Let $X$ and $Y$ be two first countable Hausdorff spaces, $U$ an open subset of $X$, $p:X \to Y$ a continuous map and $i:U \hookrightarrow p^{-1}(p(U))$ the inclusion of $U$ into $p^{-1}(p(U))$. Let $\cF$ be a sheaf of Abelian groups on $U$ such that $p|_{\Supp(\cF)}$ is a finite map. Then $i_\ast \cF$ is a $p|_{p^{-1}(p(U))}$-finite sheaf. 
\end{cor}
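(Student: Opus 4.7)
The plan is to reduce the corollary to Lemma \ref{lemme:projfinie}. The core observation is that if we can show $\Supp(i_\ast \cF) = \Supp(\cF)$ as subsets of $p^{-1}(p(U))$, then the finiteness of the restricted map follows essentially by inspection from the hypothesis that $p|_{\Supp(\cF)}$ is finite.

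First I would use Lemma \ref{lemme:projfinie} to establish that $\Supp(\cF)$, which is closed in $U$, is in fact closed in $p^{-1}(p(U))$. This applies since a finite map is proper, so $p|_{\Supp(\cF)}$ is proper, and the lemma was designed precisely to conclude closedness in the larger space $p^{-1}(p(U))$. Consequently, the complement $p^{-1}(p(U)) \setminus \Supp(\cF)$ is open in $p^{-1}(p(U))$.

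Next I would identify $\Supp(i_\ast \cF)$. Since $i$ is an open embedding, stalks are preserved on $U$: for $x \in U$, one has $(i_\ast \cF)_x = \cF_x$, so $\Supp(\cF) \subset \Supp(i_\ast \cF)$. For the reverse inclusion, take any point $x \in p^{-1}(p(U)) \setminus \Supp(\cF)$; by the first step there exists an open neighbourhood $V$ of $x$ in $p^{-1}(p(U))$ disjoint from $\Supp(\cF)$. Then $\cF|_{V \cap U} = 0$, and since $(i_\ast \cF)_x$ is the direct limit of the $\cF(V' \cap U)$ over neighbourhoods $V' \subset V$ of $x$, this stalk vanishes. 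Hence $\Supp(i_\ast \cF) = \Supp(\cF)$ as subsets of $p^{-1}(p(U))$.

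Finally I would check that $p|_{p^{-1}(p(U))}$ restricted to $\Supp(\cF)$ is finite as a map into $p(U)$. Its fibres are those of $p|_{\Supp(\cF)}:\Supp(\cF)\to Y$, which are finite by hypothesis. For properness, the preimage in $\Supp(\cF)$ of a compact $K \subset p(U)$ coincides with $(p|_{\Supp(\cF)})^{-1}(K)$ computed in $Y$, which is compact since $p|_{\Supp(\cF)}$ is proper. I do not foresee a substantial obstacle here: the only subtle point is the stalk computation in the second step, which hinges on already knowing $\Supp(\cF)$ is closed in the ambient space $p^{-1}(p(U))$ — exactly what Lemma \ref{lemme:projfinie} provides.
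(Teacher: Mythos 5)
Your argument is correct and follows exactly the route the paper intends: Lemma \ref{lemme:projfinie} (applied to the closed subset $\Supp(\cF)$ of $U$, on which $p$ is proper because it is finite) shows that $\Supp(\cF)$ is already closed in $p^{-1}(p(U))$, whence $\Supp(i_\ast\cF)=\Supp(\cF)$ and the restriction of $p$ to this support is the finite map given by hypothesis. The paper states the corollary as a direct consequence of the lemma without spelling out these steps, and your write-up supplies precisely the missing details.
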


\begin{lemme}\label{lem:zerostalk}
Let $X$ and $Y$ be two Hausdorff spaces. Let $p:X \to Y$ be a continuous map, $\cR$ be a sheaf of rings and $\cF$ be a $p$-finite sheaf of $\cR$-modules on $X$. 
If $y \in Y$ is such that $p^{-1}(y) \cap \Supp(\cF) =\emptyset$ then there exist an open neighborhood $V$ of $y$ such that  $(p_\ast \cF)|_V=0$.
\end{lemme}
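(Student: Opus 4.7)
The plan is to use the properness built into the definition of $p$-finiteness to make $p(\Supp(\cF))$ closed, and then take $V$ to be its complement. Set $S = \Supp(\cF)$, so by hypothesis $p|_S \colon S \to Y$ is a finite morphism; in particular it is proper and therefore closed. Consequently $p(S)$ is a closed subset of $Y$.

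Since $p^{-1}(y) \cap S = \emptyset$, the point $y$ does not lie in $p(S)$, so $V := Y \setminus p(S)$ is an open neighbourhood of $y$. I would then check directly that $(p_\ast \cF)|_V = 0$: for every open $W \subseteq V$ one has $p^{-1}(W) \cap S = \emptyset$ (because $W \cap p(S) = \emptyset$), hence $\cF|_{p^{-1}(W)} = 0$, and therefore
\begin{equation*}
(p_\ast \cF)(W) = \cF(p^{-1}(W)) = 0.
\end{equation*}
As this holds for all open $W \subseteq V$, we conclude $(p_\ast\cF)|_V = 0$.

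There is no serious obstacle here; the only substantive point is that a finite map is closed, which is why the definition of $p$-finite (as opposed to merely having finite fibres) is needed. Note that no hypothesis on $\cR$ or on the module structure of $\cF$ is used—the argument is a statement about the underlying sheaf of abelian groups—so one does not have to worry about the ring of scalars.
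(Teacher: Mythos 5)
Your argument is correct and coincides with the paper's proof: both take $V$ to be the complement of $p(\Supp\cF)$, which is closed because a finite map is closed, and then observe that $p_\ast\cF$ vanishes there. The extra verification on sections over $W \subseteq V$ is just a spelled-out version of the paper's final step.
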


\begin{proof}
Since $p|_{\Supp(\cM)}$ is finite, $p(\Supp(\cF))$ is closed in $Y$. Consider the open set $V={}^c p(\Supp(\cF))$. Then, $y \in V$ and $p^{-1}(V) \cap \Supp(\cF)=\emptyset$. It follows that $p_\ast \cF |_V=0$ which proves the claim.
\end{proof}

Recall the following elementary result concerning finite maps (see \cite[Ch 2 §3]{GR}).
\begin{thm}\label{thm:finitemap}
Let  $S$ be a Hausdorff space and let $f:S \to T$ be a finite map. Let $x_1, \ldots,x_t$ be the distinct points of a fiber $f^{-1}(y)$, $y \in f(S)$ and let $U^\prime_1, \ldots , U^\prime_t$ be pairwise disjoint open neighbourhoods of $x_1, \ldots ,x_t$ in $S$. Then any neighbourhood $V^\prime$ of $y$ contains an open neighbourhood $V$ of $y$ with the following properties:
\begin{enumerate}

\item $U_1:=U_1^\prime \cap f^{-1}(V), \ldots, U_t ^\prime:=U_t^\prime \cap f^{-1}(V)$ are pairwise disjoint open neighbourhoods of $x_1,\ldots,x_t$ in $S$.

\item $f^{-1}(V)= \bigcup_{i=1}^t U_j$; in particular $f(U_j) \subset V$ for all $j$.

\item All of the induced maps $f_{{U_j},V}:U_j \to V$ are finite.
\end{enumerate}
\end{thm}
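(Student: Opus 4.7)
The plan is to use the properness side of the definition of a finite map to separate the fibre $f^{-1}(y)$ from everything outside the chosen neighbourhoods, and then shrink $V'$ accordingly. First I would set $K := S \setminus (U_1' \cup \cdots \cup U_t')$, which is closed in $S$. Because $f$ is finite (hence proper and therefore closed), $f(K)$ is closed in $T$. The assumption $f^{-1}(y) = \{x_1,\ldots,x_t\} \subset \bigcup_j U_j'$ forces $y \notin f(K)$, so
\begin{equation*}
V := V' \cap (T \setminus f(K))
\end{equation*}
is an open neighbourhood of $y$ contained in $V'$. This is the $V$ I would take.

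Next I would verify properties (1) and (2). Setting $U_j := U_j' \cap f^{-1}(V)$, each $U_j$ is open, contains $x_j$ (since $f(x_j) = y \in V$), and the $U_j$'s are pairwise disjoint because the $U_j'$'s are. For (2), by construction $\bigcup_j U_j \subset f^{-1}(V)$. Conversely, if $z \in f^{-1}(V)$ then $f(z) \in V \subset T \setminus f(K)$, hence $z \notin K$, so $z \in U_j'$ for some (unique) $j$, and thus $z \in U_j$. This gives $f^{-1}(V) = \bigsqcup_j U_j$ as a disjoint union.

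Finally, for property (3) I would check that each restriction $f|_{U_j} \colon U_j \to V$ is finite. Its fibres are subsets of fibres of $f$, hence finite. For properness, note that the decomposition $f^{-1}(V) = \bigsqcup_j U_j$ into finitely many disjoint open sets makes each $U_j$ also closed in $f^{-1}(V)$. Given a compact $L \subset V$, the set $f^{-1}(L)$ is compact (since $f$ is proper) and sits inside $f^{-1}(V)$, so $(f|_{U_j})^{-1}(L) = U_j \cap f^{-1}(L)$ is a closed subset of the compact set $f^{-1}(L)$, hence compact. This is the step I expect to be the main (though mild) obstacle: one has to remember that being clopen in $f^{-1}(V)$ is what upgrades the restriction from merely having finite fibres to being proper, and this relies on the finiteness of the index set $\{1,\ldots,t\}$.
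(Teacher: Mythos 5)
Your argument is correct and is essentially the standard proof of this statement, which the paper does not prove but simply cites from Grauert--Remmert: set $K=S\setminus\bigcup_j U_j'$, use that the finite map $f$ is closed to see $f(K)$ is closed and misses $y$, and take $V=V'\cap(T\setminus f(K))$. The only point to adjust is in (3): under the Grauert--Remmert convention ``finite $=$ closed with finite fibres'' (the one the paper uses, e.g.\ in the proof of Lemma \ref{lem:zerostalk}), you should conclude that $f|_{U_j}\colon U_j\to V$ is \emph{closed} rather than proper; your detour through compact preimages proves properness, which for an arbitrary target $T$ is not literally the same thing. This is however immediate from the observation you already made: $U_j$ is closed in $f^{-1}(V)$ because the $U_j$ form a finite disjoint open cover of $f^{-1}(V)$, so any set closed in $U_j$ is closed in $f^{-1}(V)$, and the restriction of the closed map $f$ over the saturated open set $V$ sends it to a closed subset of $V$.
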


\begin{prop}\label{prop:sheafdecompo}
Let $X$ and $Y$ be two Hausdorff spaces and $p:X \to Y$ be a continuous map. Let $\cR$ be a sheaf of rings and $\cF$ be a $p$-finite sheaf of $\cR$-modules on $X$. Let $x_1, \ldots,x_t$ be the distinct points of a fiber $p|_{\Supp(\cF)}^{-1}(y)$ with $y \in p|_{\Supp(\cF)}(\Supp(\cF))$ and let $U^\prime_1, \ldots , U^\prime_t$ be pairwise disjoint open neighbourhoods of $x_1, \ldots ,x_t$ in $X$ such that $\overline{U_i^\prime} \cap \overline{U_j^\prime}=\emptyset$ for $i \neq j$ and set $\Omega_i^\prime=U_i^\prime \cap \supp(\cF)$ for $1 \leq i \leq t$. Then any neighbourhood $V^\prime$ of $y$ contains an open neighbourhood $V$ of $y$ with the following properties:  
\begin{enumerate}
\item $U_i:=U_i^\prime \cap p^{-1}(V)$ and  $\Omega_i :=\Omega_i^\prime \cap p^{-1}(V)$ are pairwise disjoint open neighbourhoods of $x_1,\ldots,x_t$ respectively in $X$ and $S$.

\item $p^{-1}(V) \cap \supp(\cF) = \bigcup_{i=1}^t \Omega_j$ for $1 \leq j \leq t$.

\item All of the induced maps $p_{{\Omega_j},V}:\Omega_j \to V$ are finite.
\end{enumerate}
and if we denote by $\gamma_i : U_i \to p^{-1}(V)$ the inclusion of $U_i$ into $p^{-1}(V)$, there is an isomorphism of $\cR|_{p^{-1}(V)}$-module
\begin{equation*} \phi:\cF|_{p^{-1}(V)} \to \prod_{i=1}^{t} \gamma_{i \ast} \gamma_i^{-1} (\cF|_{p^{-1}(V)}).
\end{equation*}
\end{prop}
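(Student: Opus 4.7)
My plan is to deduce items (1)--(3) directly from Theorem \ref{thm:finitemap} applied to the finite map $p|_{\Supp(\cF)} \colon \Supp(\cF) \to Y$, and then to establish the sheaf isomorphism $\phi$ by a stalk computation.

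First I would apply Theorem \ref{thm:finitemap} to the continuous finite map $f = p|_{\Supp(\cF)}$ with the pairwise disjoint open neighborhoods $\Omega'_i = U'_i \cap \Supp(\cF)$ of the $x_i$ in $\Supp(\cF)$. This yields an open neighborhood $V \subset V'$ of $y$ such that the sets $\Omega_i := \Omega'_i \cap p^{-1}(V)$ are pairwise disjoint open neighborhoods of the $x_i$ in $\Supp(\cF)$, cover $p^{-1}(V) \cap \Supp(\cF)$, and each induced map $p_{\Omega_j,V}\colon \Omega_j \to V$ is finite. Setting $U_i := U'_i \cap p^{-1}(V)$ clearly gives pairwise disjoint open neighborhoods in $X$ with $\Omega_i = U_i \cap \Supp(\cF)$, settling (1)--(3).

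For the decomposition, let $\gamma_i \colon U_i \hookrightarrow p^{-1}(V)$ denote the inclusion. The adjunction units $\id \to \gamma_{i\ast} \gamma_i^{-1}$ assemble into a canonical morphism
\begin{equation*}
\phi \colon \cF|_{p^{-1}(V)} \to \prod_{i=1}^{t} \gamma_{i\ast} \gamma_i^{-1}(\cF|_{p^{-1}(V)})
\end{equation*}
of $\cR|_{p^{-1}(V)}$-modules. To prove $\phi$ is an isomorphism it suffices to check it stalk-wise. Let $x \in p^{-1}(V)$. The key observation is that since $\overline{U'_i} \cap \overline{U'_j} = \emptyset$ for $i \neq j$, we also have $\overline{U_i} \cap \overline{U_j} = \emptyset$, so $x$ lies in at most one $\overline{U_i}$. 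Two cases arise.

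If $x \in \Omega_i$ for some (necessarily unique) $i$, then $x \in U_i$ and $x \notin \overline{U_j}$ for $j \neq i$, so there is a neighborhood of $x$ disjoint from $U_j$; hence $(\gamma_{j\ast}\gamma_j^{-1}\cF)_x = 0$ for $j \neq i$, while $(\gamma_{i\ast}\gamma_i^{-1}\cF)_x = \cF_x$, and $\phi_x$ is the identity. If instead $x \notin \Supp(\cF)$, then $\cF_x = 0$; moreover $x$ belongs to the open set $W_0 := p^{-1}(V) \setminus \Supp(\cF)$ on which $\cF$ vanishes, and therefore for any sufficiently small open neighborhood $W \subset W_0$ of $x$ we have $\Gamma(W \cap U_i, \cF) = 0$, giving $(\gamma_{i\ast}\gamma_i^{-1}\cF)_x = 0$ for every $i$. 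In either case $\phi_x$ is an isomorphism, concluding the proof. The only mildly delicate point is checking vanishing of the stalks at points of $W_0 \cap \overline{U_i}$, which is where the hypothesis on the disjoint closures and the openness of $W_0$ combine to give the result.
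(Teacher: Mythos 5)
Your proof is correct and follows essentially the same route as the paper: apply Theorem \ref{thm:finitemap} to the finite map $p|_{\Supp(\cF)}$ to obtain (1)--(3), assemble $\phi$ from the adjunction units, and verify it on stalks by separating the cases $x \in \Supp(\cF)$ and $x \notin \Supp(\cF)$, with the disjoint-closures hypothesis killing the factors $\gamma_{j\ast}\gamma_j^{-1}$ for $j$ other than the unique index containing $x$. If anything, your stalk computation spells out the vanishing of the off-diagonal factors more explicitly than the paper does.
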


\begin{proof}
Let $\cF$ be a $p$-finite sheaf of $\cR$-modules. We write $S$ for the support of $\cF$. It is a closed subset of $X$. By assumption $p|_S:S \to Y$ is finite. Let $x_1, \ldots,x_t$ be the distinct points of a fiber $p|_S^{-1}(y)$, $y \in p(X)$ and let $U^\prime_1, \ldots , U^\prime_t$ be pairwise disjoint open neighbourhoods of $x_1, \ldots ,x_t$ in $X$. We set $ \Omega_i^\prime:= S \cap U_i^\prime$ for $1 \leq i \leq t$. Then by Theorem \ref{thm:finitemap}, there exists an open subset $V$ of $V^\prime$ such that
\begin{enumerate}
\item $U_i:=U_i^\prime \cap p^{-1}(V)$ and  $\Omega_i :=\Omega_i^\prime \cap p^{-1}(V)$ are pairwise disjoint open neighbourhoods of $x_1,\ldots,x_t$ respectively in $X$ and $S$.

\item $p^{-1}(V) \cap S= \bigcup_{i=1}^t \Omega_j$ for $1 \leq j \leq t$.

\item All of the induced maps $p_{{\Omega_j},V}:\Omega_j \to V$ are finite.
\end{enumerate}

Let $\gamma_i : U_i \to p^{-1}(V)$ be the inclusion of $U_i$ into $p^{-1}(V)$. The adjunction between $\gamma_{i \ast}$ and $\gamma_i^{-1}$ provides a natural map
\begin{equation*} 
 \cF|_{p^{-1}(V)} \to \gamma_{i \ast} \gamma_i^{-1} (\cF|_{p^{-1}(V)}).
\end{equation*}
This induces a map  
\begin{equation}\label{map:decomposition}
 \phi:\cF|_{p^{-1}(V)} \to \prod_{i=1}^{t} \gamma_{i \ast} \gamma_i^{-1} (\cF|_{p^{-1}(V)}).
\end{equation}
Let $x \in S \cap p^{-1}(V)$. Since $\overline{U_i^\prime} \cap \overline{U_j^\prime}= \emptyset$ for $i \neq j$, there exists a unique $j_0$ such that $x \in \Omega_{j_0}\subset U_{j_0}$. This implies that $\phi_x$ is an isomorphism. 

Now, assume that $x \in p^{-1}(V)$ does not belongs to $S \cap p^{-1}(V)$. Since we have $\Supp(\gamma_{i \ast} \gamma_i^{-1} (\cF|_{p^{-1}(V)}))=\overline{\Omega_i}^{p^{-1}(V)}$, it follows that 
\begin{equation*}
\cF_x\simeq  \prod_{i=1}^{t} \lbrack \gamma_{i \ast} \gamma_i^{-1}(\cF|_{p^{-1}(V)}) \rbrack_x \simeq 0. 
\end{equation*}
It results that $\phi_x$ is an isomorphism. Thus, $\phi$ is an isomorphism of sheaves.
\end{proof}

\end{document}